\newcommand{\RN}[1]{\textup{\uppercase\expandafter{\romannumeral#1}}}
\numberwithin{equation}{section}
\newcommand\mc{\mathcal}
\newcommand\mb{\mathbb}
\crefname{equation}{}{}
\newtheorem{theorem}{Theorem}[section]
\newtheorem{lemma}[theorem]{Lemma}
\newtheorem{proposition}[theorem]{Proposition}
\newtheorem{corollary}[theorem]{Corollary}
\theoremstyle{definition}
\newtheorem{definition}[theorem]{Definition}
\newtheorem{example}[theorem]{Example}
\theoremstyle{remark}
\newtheorem{remark}[theorem]{Remark}
\title{Hilbert schemes on blowing ups and the free Boson}
\begin{document}
\author{Yu Zhao} 
\address{Beijing Institute of Technology, Beijing, China} 
\email{zy199402@live.com} 
\maketitle
\begin{abstract}
  For different cohomology theories (including Hochschild homology, Hodge cohomology, Chow groups, and Grothendieck groups of coherent sheaves), we identify the cohomology of the moduli space of rank $1$ perverse coherent sheaves on the blow-up of a surface by Nakajima-Yoshioka \cite{1050282810787470592} as the tensor product of the cohomology of Hilbert schemes with a Fermionic Fock space. As the stable limit, we identify the cohomology of Hilbert schemes of the blow-up of a surface as the tensor product of the cohomology of Hilbert schemes on the surface with a Bosonic Fock space. The actions of the infinite-dimensional Clifford algebra and Heisenberg algebra are all given by geometric correspondences.
  \end{abstract}
  
\section{Introduction}
\subsection{Motivation and the Main Theorem} Let $S$ be a smooth projective surface over $\mb{C}$. The famous Göttsche's formula computed the Poincaré polynomials of Hilbert schemes of points on $S$ \cite{Göttsche1990}: 
\begin{equation}
    \label{eq:go}
  \sum_{n=0}^{\infty}q^{n}P_{t}(S^{[n]})=\prod_{m=1}^{\infty}\frac{(1+t^{2m-1}q^{m})^{b_{1}(S)}(1+t^{2m+1}q^{m})^{b_{3}(S)}}{(1-t^{2m-2}q^{m})^{b_{0}(S)}(1-t^{2m}q^{m})^{b_{2}(S)}(1-t^{2m+2}q^{m})^{b_{4}(S)}},
\end{equation}
where $b_{i}(S)$ is the $i$-th Betti number. 
\begin{enumerate}
\item By letting $t=1$, the formula above is the character of a super-Fock space. Nakajima \cite{nakajima1997heisenberg} and Grojnowski \cite{grojnowski1996instantons} realized the cohomology group of the Hilbert scheme as the Fock space of a super-Heisenberg algebra through geometric correspondences.

  By replacing the cohomology with the Chow group, the Heisenberg (and moreover, Virasoro and $W_{1+\infty}$) algebra action was constructed by Maulik-Negut \cite{maulik2020}. However, as the Chow group of a surface can be infinite-dimensional (like $K3$ surfaces), the Chow groups of all $S^{n}$ are involved to determine the Chow group of Hilbert schemes, and the representation theory is also more complicated (for example, $sp_{2\infty}$ will naturally appear).  

\item Let $\hat{S}$ be the blow-up of $S$ at a point $o$. By Göttsche's formula, we have
  \begin{equation*}
    \sum_{n=0}^{\infty}q^{n}P_{t}(\hat{S}^{[n]})=\prod_{m=1}^{\infty}\frac{1}{1-t^{2m}q^{m}} \sum_{n=0}^{\infty}q^{n}P_{t}(S^{[n]}),
  \end{equation*}
  where the ratio
  \begin{equation*}
    \prod_{m=1}^{\infty}\frac{1}{1-t^{2m}q^{m}}
  \end{equation*}
  is the character of a Bosonic Fock space. Thus, it suggests we have a canonical isomorphism
  \begin{equation}
    \label{eq:prep}
    \bigoplus_{n=0}^{\infty}H^{*}(\hat{S}^{[n]},\mb{Q})\cong \bigoplus_{n=0}^{\infty}H^{*}(S^{[n]},\mb{Q})\otimes_{\mb{Q}}B,
  \end{equation}
  where $B$ is the Bosonic Fock space $\mb{Q}[y_{1},y_{2},\cdots]$. The formula \eqref{eq:prep} does not depend on the choice of the surface $S$ and the point $o$, and might also hold if we consider the generalized cohomology theory like Hochschild homology, Chow groups and Grothendieck group of coherent sheaves.
\end{enumerate}
 
The purpose of this paper is to realize the expectation \eqref{eq:prep} by proving that:
\begin{theorem}[\cref{thm63}]
  \label{thm:main}
  Let $\mathbb{H}^{*}$ be the functor of Hodge cohomology groups, Hochschild homology groups, Chow groups, or Grothendieck groups of coherent sheaves. We have an identification 
  \begin{equation}
    \bigoplus_{n=0}^{\infty}\mathbb{H}^{*}(\hat{S}^{[n]},\mb{Q})\cong \bigoplus_{n=0}^{\infty}\mathbb{H}^{*}(S^{[n]},\mb{Q})\otimes_{\mb{Q}}B,
  \end{equation}
  where the Heisenberg algebra action on $B$ is induced by geometric correspondences.
\end{theorem}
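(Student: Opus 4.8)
The plan is to deduce \cref{thm:main} from a sharper ``Fermionic'' statement about moduli of rank~$1$ perverse coherent sheaves on $\hat S$, and then to pass to the stable limit and invoke the boson--fermion correspondence. Write $\mb{F}:=\bigoplus_{n\ge 0}\mathbb{H}^*(S^{[n]},\mb{Q})$ for the ``surface side''. Following Nakajima--Yoshioka \cite{1050282810787470592}, for each perversity parameter $m\ge 0$ and each value $c$ of the discrete invariants --- a second Chern number together with the integer $k$ with $c_1=kE$, where $E\subset\hat S$ is the exceptional curve --- one has the smooth projective moduli space $\hat M^m(c)$ of $m$-stable rank~$1$ perverse coherent sheaves, and I would use three of their features as black boxes: (i) after a suitable normalization of $c$, the non-empty components of $\hat M^0$ are exactly the $S^{[n]}$, so $\bigoplus_c\mathbb{H}^*(\hat M^0(c),\mb{Q})\cong\mb{F}$; (ii) for each fixed second Chern number the sequence $(\hat M^m(c))_m$ stabilizes for $m\gg 0$, the stable value $\hat M^\infty(c)$ being the Hilbert scheme $\hat S^{[n]}$ up to the elementary twist by $\mc{O}(kE)$; and (iii) for each $m$ the spaces $\hat M^m(c)$ and $\hat M^{m+1}(c)$ are related by a ``flip'', i.e.\ a sequence of blow-ups and blow-downs whose smooth centers are fibered over lower moduli spaces and symmetric products of $E\cong\mb{P}^1$, producing explicit algebraic correspondences among the $\hat M^m(c)$.

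The first step is to transport the geometry of (iii) to $\mathbb{H}^*$. All three functors in \cref{thm:main} satisfy the projective bundle formula and the blow-up formula --- for Chow groups this is classical, and for Hochschild homology and Hodge cohomology it follows from the semiorthogonal decompositions of a projective bundle and of a blow-up --- and each carries an action of algebraic correspondences (cycles for Chow groups, their classes for Hodge cohomology, Fourier--Mukai kernels for Hochschild homology), compatibly when the correspondence is the structure sheaf of a subvariety. Running each flip diagram through these two formulas yields, for every $m$, an explicit direct-sum decomposition of $\bigoplus_c\mathbb{H}^*(\hat M^{m+1}(c),\mb{Q})$ in terms of $\bigoplus_c\mathbb{H}^*(\hat M^{m}(c),\mb{Q})$ and copies of $\mathbb{H}^*$ of the flipped centers, realised by an algebraic correspondence. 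Iterating from $m=0$, passing to the limit in $m$, and tracking the wall combinatorics, I would assemble a canonical identification
\begin{equation*}
  \bigoplus_{c}\mathbb{H}^*\bigl(\hat M^{\infty}(c),\mb{Q}\bigr)\ \cong\ \mb{F}\otimes_{\mb{Q}}\Lambda ,
\end{equation*}
with $\Lambda$ the Fermionic Fock space of an infinite-dimensional Clifford algebra, graded by $k$. The content of this step is that the raising and lowering correspondences read off from the walls satisfy the Clifford anticommutation relations \emph{as identities of correspondences}, and that the Clifford-module they generate from the vacuum summand $\mb{F}\subset\mb{F}\otimes\Lambda$ is all of it --- the numerical shadow of which is Göttsche's formula \eqref{eq:go}.

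Finally I would apply the boson--fermion correspondence. Restricting the displayed identification to charge $k=0$, its left-hand summand is $\bigoplus_n\mathbb{H}^*(\hat S^{[n]},\mb{Q})$ by (ii), while the charge-$0$ part of $\Lambda$, as a module over the standard Heisenberg subalgebra of the Clifford algebra, is the bosonic Fock space $B=\mb{Q}[y_1,y_2,\dots]$; this is exactly the isomorphism $\bigoplus_n\mathbb{H}^*(\hat S^{[n]},\mb{Q})\cong\mb{F}\otimes_{\mb{Q}}B$ of \cref{thm:main}. That the Heisenberg action is geometric now follows formally: under the boson--fermion dictionary each generator $\alpha_{\pm\ell}$ is a locally finite sum of compositions of two Clifford correspondences, hence itself an algebraic correspondence, and composing with the identifications of (ii) turns it into a correspondence between the Hilbert schemes $\hat S^{[n]}$ and $\hat S^{[n+\ell]}$ of the blow-up. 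When $S$ has odd cohomology the whole argument is carried out relatively over $S$, with no essential change.

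I expect the main obstacle to lie in the Chow-group case. For Hodge cohomology and Hochschild homology one can split the exact sequences attached to the flips functorially, using weights and purity or by passing to motives; for Chow groups no such device is available --- $\mathrm{CH}_*(S^{[n]})$ is genuinely infinite-dimensional for e.g.\ a $K3$ or abelian surface --- so one must check directly, by intersection theory on the flip varieties, that the correspondences produced are mutually inverse isomorphisms of Chow groups on the nose. Concretely this requires (a) computing the projective-bundle and excess-intersection contributions finely enough to recognise the Clifford relations among the wall correspondences, and (b) a spanning statement --- an induction on $m$ and on $n$ driven by the wall geometry of \cite{1050282810787470592} --- that the vacuum $\mb{F}$ generates everything under these correspondences. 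It is these two points, rather than the formal boson--fermion bookkeeping, where the work concentrates.
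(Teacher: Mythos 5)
Your overall architecture matches the paper's: prove a ``fermionic'' statement for the Nakajima--Yoshioka moduli spaces of rank $1$ perverse coherent sheaves, pass to the stable limit in the perversity parameter, and convert fermions to bosons in charge zero. The divergence is in the mechanism by which the fermionic relations are established, and that is exactly where your plan has gaps. Your black box (iii) is not what \cite{1050282810787470592} provides in rank $1$: the wall-crossing between $M^{m}(c)$ and $M^{m+1}(c)$ is not a composition of blow-ups and blow-downs along smooth centers but a Grassmannian-type (Brill--Noether) modification, so the projective-bundle and blow-up formulas you want to ``run the flip diagrams through'' are not available in the form you need. The paper instead uses the identification of $M^{0}(l,n)$ with \emph{derived Grassmannians} of the two-term complex $\mc{I}_{o}$ (\cref{thm35}) together with Jiang's semiorthogonal decomposition of these, realized by explicit Fourier--Mukai kernels supported on incidence varieties (\cref{thm1}).

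The second, larger gap is the one you flag yourself: you leave unproven both the Clifford anticommutation relations among the wall correspondences and the spanning statement from the vacuum, and in your route these would require genuine excess-intersection computations. The paper avoids both by an algebraic device. It introduces the four-generator algebra $\mc{E}$ with relations \cref{eq:EF}; these relations are precisely the content of a two-term semiorthogonal decomposition (full faithfulness, orthogonality, generation of the diagonal), hence follow from \cref{thm1} at the level of kernels and descend to all three cohomology theories --- including Chow groups, where your worry about the lack of a splitting device is genuine --- via Caldararu's Mukai-vector formalism \cite{1362544419271316864} and the virtual Grothendieck--Riemann--Roch theorem, with no case-by-case argument. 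The Clifford generators are then \emph{defined} inside $\mc{E}$ (\cref{prop12}), so their relations are an algebra computation rather than intersection theory, and your spanning statement (b) becomes the purely algebraic assertion that every graded admissible $\mc{E}$-module is free over the Fock space (\cref{fermion-boson}), with admissibility checked from the single geometric input $M(l,n)=\emptyset$ for $n<0$. In short, your steps (a) and (b) are not routine verifications: (a) amounts to reproving Jiang's decomposition by hand and (b) to reproving freeness of admissible modules, both of which the paper's $\mc{E}$-algebra plus derived-Grassmannian package supplies wholesale.
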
 
\begin{remark}
   More details of our geometric correspondences will be given in \cref{sec:12} to \cref{sec:14}. At the current stage, we do not know the precise relation between our Heisenberg algebra action and that of Nakajima \cite{nakajima1997heisenberg} and Grojnowski \cite{grojnowski1996instantons} in cohomology groups or Maulik-Negu\c{t} \cite{maulik2020} in Chow groups. We expect the study of the affine super-Yangian will shed some light on this problem. See \cref{sec:16} for more discussions.
\end{remark}
\subsection{Stable Limits and Perverse Coherent Sheaves}
\label{sec:12}
\cref{thm:main} follows from regarding both the Bosonic Fock space $B$ and $\mb{H}^{*}(\hat{S}^{[n]},\mb{Q})$ as a stable limit:
\begin{enumerate}
\item We consider the Fermionic Fock space $F$ which is the exterior algebra of
\begin{equation*}
  V=\bigoplus_{i=1}^{\infty} \mb{Q}x_{i}.
\end{equation*}
The degree $1$ operator $K_{1}$
\begin{equation*}
 K_{1}:F_{i}\to F_{i+1},\quad x_{j_{1}}\wedge\cdots\wedge x_{j_{i}}\to  x_{0}\wedge x_{j_{1}+1}\wedge\cdots\wedge x_{j_{i}+1}
\end{equation*}
is injective. Moreover, by direct computation, the stable limit
\begin{equation*}
 F_{\infty}:=\lim_{i\to\infty}F_{i}\cong B.
\end{equation*}
\item Nakajima-Yoshioka \cite{1050282810787470592} considered the moduli space of $l$-stable perverse coherent sheaves $M^{l}(c_{n})$ on $\hat{S}$, where $l\in \mb{Z}$ and 
   \begin{equation*}
      c_{n}=[\hat{S}]+n[o]
    \end{equation*}
    is the Chern character. It has the property that
        \begin{equation*}
      M^{0}(c_{n})\cong S^{[n]}, \quad M^{l}(c_{n})\cong \hat{S}^{[n]} \text{ if } l\gg 0.
    \end{equation*}
    Hence $\hat{S}^{[n]}$ can be regarded as the stable limit of $M^{l}(c_{n})$, $l\to \infty$.
\end{enumerate}
\cref{thm:main} follows from the representation theory on $M^{l}(c_{n})$:
\begin{theorem}[\cref{thm62}]
  \label{thm2}
  We have an identification
  \begin{equation}
    \label{eq:main}
    \bigoplus_{l=0}^{\infty}\bigoplus_{n=0}^{\infty}\mb{H}^{*}(M^{l}(c_{n}),\mb{Q})\cong \bigoplus_{n=0}^{\infty}\mb{H}^{*}(S^{[n]},\mb{Q})\otimes_{\mb{Q}} F
  \end{equation}
  and the action of the infinite-dimensional Clifford algebra on $F$ is given by geometric correspondences.
\end{theorem}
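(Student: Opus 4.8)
The plan is to build the isomorphism \eqref{eq:main} inductively in the wall-crossing parameter $m$, using the birational modifications between $M^m(c_n)$ and $M^{m+1}(c_n)$ coming from Nakajima–Yoshioka. First I would set up the geometric correspondences: for each $m$ there is a diagram relating $M^m(c_n)$ and $M^{m+1}(c_n)$ (through a common roof, or through the explicit flip/flop described by Nakajima–Yoshioka), together with incidence correspondences between $M^m(c_n)$ and $M^m(c_{n+1})$ (the analogue of the nested Hilbert schemes). Pulling back and pushing forward classes along these correspondences should give, on $\bigoplus_{m,n}\mb{H}^*(M^m(c_n))$, operators satisfying the Clifford relations; the degree-shifting operators $K_1$-type maps $x_{j_1}\wedge\cdots\wedge x_{j_k}\mapsto x_0\wedge x_{j_1+1}\wedge\cdots\wedge x_{j_k+1}$ from the excerpt are the combinatorial shadow of passing from wall $m$ to wall $m+1$. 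I would verify the Clifford algebra relations (anticommutation of the creation/annihilation operators) by a direct intersection-theoretic computation on the relevant fiber products of moduli spaces, which reduces — because the wall-crossing loci are explicitly $\mb{P}^k$-bundles in the Nakajima–Yoshioka picture — to a computation on projective bundles over $M^m(c_n)$ or $S^{[n']}$.

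The key structural input is a decomposition, for each fixed $m$, of $\mb{H}^*(M^m(c_n))$ in terms of the Hilbert scheme cohomologies $\mb{H}^*(S^{[n']})$ tensored with an explicit finite-dimensional "fermionic" piece that tracks how much of $c_n$ has been absorbed into the exceptional divisor at wall $m$. Concretely I expect $\bigoplus_n \mb{H}^*(M^m(c_n)) \cong \bigoplus_n \mb{H}^*(S^{[n]}) \otimes F^{(m)}$ where $F^{(m)} = \bigoplus_{i=0}^m F_i$ is the truncated Fermionic Fock space, and that the wall-crossing map $M^m \rightsquigarrow M^{m+1}$ induces precisely the inclusion $F^{(m)} \hookrightarrow F^{(m+1)}$ realized by $K_1$. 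Granting the birational invariance of $\mb{H}^*$ for the relevant class of birational maps (semiorthogonal decompositions for Hodge cohomology and Hochschild homology, and the corresponding motivic statement for Chow groups — here I would invoke the blow-up/flip formulas, since the wall-crossings are standard flips of $\mb{P}^k$-bundles), the difference $\mb{H}^*(M^{m+1}(c_n)) \ominus \mb{H}^*(M^m(c_n))$ is computed by the exceptional loci of the flip, which are projective-bundle pieces over smaller moduli, and these assemble exactly into the new fermionic wedge-degree. Passing to the limit $m\to\infty$, where $M^m(c_n)\cong \hat S^{[n]}$ stabilizes while $F^{(m)}\to F_\infty \cong B$, then yields \cref{thm:main}.

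The main obstacle, I expect, is \textbf{compatibility and associativity}: showing that the inductively constructed isomorphisms at different walls $m$ are compatible with the incidence (Heisenberg/Clifford) operators in the $n$-direction, i.e. that the correspondences in the $m$-direction and those in the $n$-direction generate the full infinite-dimensional Clifford algebra with the correct relations, rather than merely an abstract isomorphism of graded vector spaces. This requires a careful analysis on triple fiber products $M^m(c_n)\times_? M^{m'}(c_{n'})\times_? M^{m''}(c_{n''})$ and controlling the excess intersection contributions; the Nakajima–Yoshioka description of the wall-crossing as a blow-up followed by a blow-down (with explicit exceptional divisors) is what makes this tractable, but assembling the bookkeeping uniformly in $m$ and $n$ — and checking it is independent of the auxiliary choices (the point $o$, the polarization) — is the technical heart. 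A secondary difficulty is handling the three cohomology theories uniformly: for Chow groups one cannot use Betti numbers as a crutch, so the argument must be genuinely motivic/correspondence-theoretic, and one must be careful that the flip formula and the projective-bundle formula hold at the level of Chow motives (which they do, but this forces the correspondences to be written as honest algebraic cycles on the fiber products).
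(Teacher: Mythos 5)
Your overall philosophy (correspondences extracted from the Nakajima--Yoshioka wall-crossing, induction in the wall parameter, stabilization to $\hat S^{[n]}$) is in the spirit of the paper, but there are two substantive problems. First, a concrete error: the intermediate decomposition $\bigoplus_n\mb{H}^{*}(M^{m}(c_{n}))\cong\bigoplus_n\mb{H}^{*}(S^{[n]})\otimes F^{(m)}$ with $F^{(m)}=\bigoplus_{i=0}^{m}F_{i}$ cannot be correct: summing over $m$ would make each wedge power $\wedge^{i}V$ appear infinitely often, contradicting the target $\bigoplus_n\mb{H}^{*}(S^{[n]})\otimes F$. The correct fixed-wall statement is $\bigoplus_n\mb{H}^{*}(M^{m}(c_{n}))\cong\bigoplus_n\mb{H}^{*}(S^{[n]})\otimes\wedge^{m}V$. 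Already at the first wall, Jiang's semiorthogonal decomposition applied to $M^{0}(1,n)\cong Gr_{S^{[n]}}(\mc{I}_{o},1)$ gives $\mb{H}^{*}(M^{0}(1,n))\cong\bigoplus_{k\leq n}\mb{H}^{*}(S^{[k]})$, which is the $q$-degree $n$ part of $\bigoplus_n\mb{H}^{*}(S^{[n]})\otimes V$, not of $\bigoplus_n\mb{H}^{*}(S^{[n]})\otimes(\mb{Q}\oplus V)$. Accordingly the wall-crossing realizes the injection $\wedge^{m}V\hookrightarrow\wedge^{m+1}V$ (the operator called $E$ in the body of the paper, $K_{1}$ in the introduction), and it is the limit of the single graded pieces $\wedge^{m}V$, not of your truncations, that gives $B$.

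Second, the step you correctly identify as the technical heart --- verifying the infinitely many Clifford anticommutation relations by excess-intersection computations on triple fiber products, and the compatibility of the $m$- and $n$-direction operators --- is precisely what the paper is engineered to avoid, and I do not believe it is tractable as you describe. The missing idea is the auxiliary algebra $\mc{E}$ with only three relations, $K_{-1}K_{1}=FE=1$, $K_{-1}E=FK_{1}=0$, $K_{1}K_{-1}+EF=1$. These are not checked by intersection theory at all: they are the cohomological shadow (via Caldararu's Mukai-vector formalism, \cref{prop:3155}) of Jiang's semiorthogonal decomposition of $Perf(Gr_{X}(\mc{I}_{o},l))$ for the rank-one, tor-amplitude $[0,1]$ complex $\mc{I}_{o}$, combined with the two identifications $M^{0}(l,n)\cong Gr(\mc{I}_{o},l)\cong Gr(\mc{I}_{o}^{\vee}[1],l)$ over two different Hilbert schemes (\cref{thm35}). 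The Clifford generators $P_{i},Q_{i}$ are then universal noncommutative polynomials in $E,F,K_{\pm1}$ (\cref{prop12}), so their relations, the tensor decomposition $W\cong W_{0}\otimes F$, and all the compatibility and independence-of-choices issues you raise are settled purely algebraically by the structure theory of graded admissible $\mc{E}$-modules (\cref{fermion-boson}); the only remaining geometric input is the vanishing $M^{0}(l,n)=\emptyset$ for $n<0$, which gives admissibility. Note also that the paper's $n$-direction operators $K_{\pm1}$ relate $M^{0}(l,n)$ to $M^{0}(l,n-l)$, shifting $n$ by $l$ rather than by $1$, so they are not the nested-Hilbert-scheme-type correspondences you propose.
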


\subsection{A categorical Boson-Fermion correspondence}
The fact that the Bosonic Fock space $B$ is the stable limit of the Fermionic Fock space $F$ is a special case of a categorical Boson-Fermion correspondence which is of independent interest. It reveals an equivalence of certain infinite-dimensional Heisenberg algebra (which we denote as $\mc{H}$) representations and graded infinite-dimensional Clifford algebra (which we denote as $\mc{C}$) representations. The 
bridge between $\mc{C}$ and $\mc{H}$ is an $\mb{Q}$-algebra, which we call $\mc{E}$:
\begin{definition}[\cref{eq:EF}]
  The algebra $\mc{E}$ is defined as a $\mb{Q}$-algebra generated by $K_{1},K_{-1},E,F$ with relations
  \begin{equation*}
    K_{-1}K_{1}=FE=1, \quad K_{-1}E=FK_{1}=0, \quad K_{1}K_{-1}+EF=1.
  \end{equation*}
\end{definition}
  
The relation between $\mc{E}$ with $\mc{C}$ is direct: we consider $\{P_{i},Q_{i}\}_{i\in \mb{Z}_{>0}}$ such that $ P_{1}=EK_{-1}, Q_{1}=K_{1}F$ and
  \begin{align*}
      P_{i}=K_{1}P_{i-1}K_{-1}-EP_{i-1}F, \quad Q_{i}=K_{1}Q_{i-1}K_{-1}-EQ_{i-1}F ,\quad \text{ if } i>1.
  \end{align*}
  Then $P_{i},Q_{i}$ satisfy the relations of infinite-dimensional Clifford algebras (see \cref{prop12} for the details). On the other hand, the Fermionic Fock space $F$ can also be regarded as the highest weight representation of $\mc{E}$ generated by a vector $|1\rangle $ and the relations (see \cref{prop:110} for the details)
  \begin{equation*}
    F|1\rangle =0, \quad K_{-1}|1\rangle =K_{1}|1\rangle =|1\rangle.
  \end{equation*}

  The relation between the representation theory of $\mc{E}$ and $\mc{H}$ is more delicate: roughly speaking, given a graded $\mc{E}$-representation on
  \begin{equation*}
    W=\bigoplus_{i\in \mb{Z}}W_{i}
  \end{equation*}
  such that $K_{1}$ has degree $1$, then $K_{1}$ is injective and induces a stable limit
  \begin{equation*}
    W_{\infty}=\lim_{n\to \infty}W_{n}
  \end{equation*}
  which is endowed with an $\mc{H}$ action if certain admissible property is satisfied. Particularly, we have the following equivalence
  \begin{theorem}[Categorical Boson-Fermion correspondence, \cref{fermion-boson}]
  We have a canonical equivalence between the following categories:
  \begin{enumerate}
  \item the abelian category of admissible $\mc{H}$-modules;
  \item the abelian category of graded admissible $\mc{C}$-modules;
  \item the abelian category of graded admissible $\mc{E}$-modules;
  \item the abelian category of $\mb{Q}$-vector spaces.
  \end{enumerate}
  Here the definitions of admissible modules are defined in \cref{def:ad1,def:ad2,def:ad3}.
  \end{theorem}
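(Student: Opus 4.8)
The plan is to deduce the equivalence from a single structural fact: each of the three module categories in (1), (2), (3) is equivalent to the category (4) of $\mb{Q}$-vector spaces, and these equivalences are mutually compatible, so that composing one with a quasi-inverse of another produces all the asserted correspondences. The argument is organized around $\mc{E}$, which interpolates between the Clifford and the Heisenberg pictures. In each case the equivalence with $\mb{Q}$-Vect is realized by an adjoint pair: ``tensor over $\mb{Q}$ with the appropriate Fock space'' on one side --- $U\mapsto F\otimes_{\mb{Q}}U$ for $\mc{E}$ and for $\mc{C}$ (with the action on the first factor and the induced grading), and $U\mapsto B\otimes_{\mb{Q}}U$ for $\mc{H}$ --- paired against a ``multiplicity space'' functor $\Phi$ on the other side, which extracts the span of the distinguished vacuum-type vectors (cf. \cref{def:ad1,def:ad2,def:ad3} and \cref{prop:110}) and is normalized so that $\Phi(F)=\Phi(B)=\mb{Q}$. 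One also checks that the admissibility conditions pass to subobjects and quotients, so that the categories in (1), (2), (3) are genuinely abelian.

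First I would establish the equivalence between graded admissible $\mc{E}$-modules and $\mb{Q}$-Vect, which is the core of the theorem. One verifies that $G(U):=F\otimes_{\mb{Q}}U$ is admissible, that the unit $U\to\Phi(G(U))$ is an isomorphism (in $F$ itself the multiplicity space is exactly the vacuum line, by \cref{prop:110}), and --- the substantive point --- that the counit $G(\Phi(W))\to W$ is an isomorphism for every admissible $W$. This is a normal-form assertion: the relations $K_{-1}K_{1}=FE=1$, $K_{-1}E=FK_{1}=0$, $K_{1}K_{-1}+EF=1$ split each graded piece $W_{i}$ as the image of $K_{1}$ from degree $i-1$ plus the image of $E$ in degree $i$, and one then proves by induction on degree that every vector of $W$ is uniquely a finite sum of monomials in the creation-type operators $K_{1}$ and $E$ applied to $\Phi(W)$. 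The admissibility hypotheses --- boundedness below, local finiteness, and the condition controlling the stable limit --- are precisely what force this induction to terminate and exclude ``phantom'' submodules. Granting the normal form, the exactness and $\mb{Q}$-linearity of $\Phi$ and the fact that $G$ and $\Phi$ are mutually quasi-inverse are routine.

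Next I would bring in $\mc{C}$ and $\mc{H}$. For $\mc{C}$ versus $\mc{E}$: by the formulas preceding \cref{prop12} the Clifford generators $P_{i},Q_{i}$ are polynomials in $K_{\pm1},E,F$, so restriction sends an $\mc{E}$-module to a $\mc{C}$-module, while on an admissible $\mc{C}$-module the operators $K_{\pm1},E,F$ are recovered from the $P_{i},Q_{i}$ (together with the grading operator) as locally finite series, the $\mc{C}$-side admissibility being exactly what makes these series operate; the two assignments are inverse on underlying graded vector spaces and hence define an equivalence. For $\mc{H}$ versus $\mc{C}$: to a graded admissible $\mc{C}$-module $W=\bigoplus_{i}W_{i}$ one associates the stable limit $W_{\infty}=\varinjlim_{i}W_{i}$ along the maps $K_{1}\colon W_{i}\to W_{i+1}$; since $K_{1}$ is injective this is a filtered colimit along monomorphisms, hence exact, and the Heisenberg generators act on $W_{\infty}$ through the standard normally ordered bilinear combinations of the $P_{i},Q_{i}$, whose infinite sums become locally finite after passing to the limit --- this is the categorical incarnation of the classical boson--fermion correspondence. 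A quasi-inverse reconstructs the fermionic module from an admissible $\mc{H}$-module $B\otimes_{\mb{Q}}U$ (in the classical setting this direction is effected by vertex operators). To see these functors are mutually quasi-inverse, rather than merely separate equivalences with $\mb{Q}$-Vect, one checks that the whole diagram commutes, up to natural isomorphism, with the multiplicity-space functors $\Phi$.

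The step I expect to be hardest is the structure theorem of the second paragraph. The algebra $\mc{E}$ is of Leavitt type: the idempotents $K_{1}K_{-1}$ and $EF$ are orthogonal with sum $1$, and right multiplication by $K_{1}$ and by $E$ exhibits $\mc{E}\cong\mc{E}\oplus\mc{E}$ as right $\mc{E}$-modules, so the ungraded module category of $\mc{E}$ has no such rigidity at all; it is only the combination of the $\mb{Z}$-grading with $\deg K_{1}=1$ and the admissibility constraints that forces the free form $W\cong F\otimes_{\mb{Q}}\Phi(W)$. Making the normal-form induction close --- in particular ruling out modules that resemble $F$ with the vacuum deleted, which is exactly the kind of object the Leavitt pathology would otherwise permit --- is where the precise content of \cref{def:ad1,def:ad2,def:ad3} must be used, and is the technical heart of the theorem. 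By comparison the boson--fermion step is classical in spirit once the stable-limit functor has been shown to be exact and the normally ordered sums seen to converge locally.
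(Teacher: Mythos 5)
Your plan shifts the main structural theorem from the $\mc{H}$-side to the $\mc{E}$-side, and that half of the proposal is sound: the normal-form argument for graded admissible $\mc{E}$-modules is exactly what the identity $\sum_{i=0}^{l}E_{i}F_{i}=1-K_{1}^{l+1}K_{-1}^{l+1}$ of \cref{lem22} delivers (admissibility kills $K_{1}^{l+1}K_{-1}^{l+1}x$ for $l$ large, each $F_{i}x$ lives one degree lower, and the induction terminates at $W_{0}$ where $K_{\pm1}=id$), and your Leavitt-algebra remark correctly identifies why the grading and admissibility are indispensable. The $\mc{C}\leftrightarrow\mc{E}$ step also matches the paper's \cref{prop:213,prop125} in substance. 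The paper instead proves the freeness statement on the $\mc{H}$-side ($W_{\infty}\cong B\otimes_{\mb{Q}}W_{0}$, via the telescoping identity $w=\sum_{k}H_{i+1}^{k}((1-H_{i+1}\mc{L}_{i+1})\mc{L}_{i+1}^{k}w)$) and builds explicit functors in both directions between the $\mc{E}$- and $\mc{H}$-pictures; your route economizes on the $\mc{E}$-side but, as written, does not make the $\mc{H}$-side close.

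The gap is in your third paragraph. To conclude that the stable-limit functor from graded admissible $\mc{E}$- (or $\mc{C}$-) modules to admissible $\mc{H}$-modules is an equivalence, you need essential surjectivity: every admissible $\mc{H}$-module must be shown to be of the form $B\otimes_{\mb{Q}}U$. Your $\mc{E}$-side structure theorem only tells you that the essential image consists of modules of that form; it says nothing about an abstract admissible $\mc{H}$-module, and your proposed quasi-inverse already presupposes the input is $B\otimes_{\mb{Q}}U$. The appeal to vertex operators does not repair this, since an abstract admissible $\mc{H}$-module carries no charge decomposition or translation operator to feed into a vertex-operator construction; what is actually needed is a device internal to $\mc{H}$ and the admissibility condition of \cref{def:ad3}. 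This is precisely the content of the paper's \cref{224} and \cref{sec:A2}: the filtration $W_{i}=\bigcap_{l>i}\ker(H_{l})$, the locally finite operators $\mc{L}_{k}=-\sum_{i\geq1}\frac{1}{k^{i}i!}H_{k}^{i-1}H_{-k}^{i}$ satisfying $\mc{L}_{k}H_{k}=id$ and $H_{-k}(1-H_{k}\mc{L}_{k})=0$, and the resulting isomorphisms $W_{i}\otimes\mb{Q}[y_{i+1}]\xrightarrow{\sim}W_{i+1}$. Without constructing these (or an equivalent mechanism), the equivalence between categories (1) and (2)--(4) is not established. A secondary, smaller point: your claim that the Heisenberg generators on the stable limit are the standard normally ordered bilinears in the $P_{i},Q_{j}$ would itself need verification of the commutation relations and of local finiteness in this abstract setting; the paper does this by a recursive definition of operators $H_{i,j}$ and \cref{prop:120}, which is a nontrivial computation rather than a citation of the classical correspondence.
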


\subsection{Derived Grassmannians and incidence varieties} 
  \label{sec:14}
  Due to the categorical Boson-Fermion correspondence, we would like to construct an $\mc{E}$-module structure on
  \begin{equation*}
    \bigoplus_{m=0}^{\infty}\bigoplus_{n=0}^{\infty}\mathbb{H}^{*}(M^{m}(c_{n}), \mathbb{Q}).
  \end{equation*}
  by incidence varieties. It follows from the incidence variety of the derived Grassmannian which is introduced by Jiang \cite{jiang2022grassmanian}: given a locally free sheaf $\mc{I}$ over a variety $X$, the Grassmannian (under the notation of Grothendieck) $\mc{I}$, which we denote as $Gr(\mc{I}, r)$, is the moduli space of quotients
  \begin{equation*}
    \mc{I} \twoheadrightarrow \mc{E}
  \end{equation*}
  where $\mc{E}$ is a rank $r$ locally free sheaf on $X$. Jiang \cite{jiang2022grassmanian} generalized the concept of the Grassmannian and allows $\mc{I}$ to be a perfect complex, i.e., locally a complex of locally free sheaves. A special case we shall care about is when $\mc{I}$ has tor-amplitude $[0,1]$, i.e., it is locally a two-term complex of locally free sheaves. Particularly, in this situation $\mc{I}^{\vee}[1]$ is also a tor-amplitude complex.

 Fixing integers $d_{-},d_{+}\geq 0$, we consider the derived Grassmannians
 \begin{equation*}
   Gr(\mc{I},d_{+}), \quad Gr(\mc{I}^{\vee}[1],d_{-}).
 \end{equation*}
 In \cite{jiang2022grassmanian}, Jiang defined an incidence variety
 \begin{equation*}
   Incid_{X}(\mc{I},d_{+},d_{-})
 \end{equation*}
 which is the fiber product of $Gr(\mc{I},d_{+})$ and $Gr(\mc{I}^{\vee}[1],d_{-})$ over $X$ schematically but with a different perfect obstruction theory structure (i.e. derived structure in derived algebraic geometry). The (virtual) fundamental class of $Incid_{X}(\mc{I},d_{+},d_{-})$ induces correspondences:
 \begin{align*}
   \phi_{d_{+},d_{-}}:\mb{H}^{*}(Gr(\mc{I}^{\vee}[1],d_{-}),\mb{Q})\to \mb{H}^{*}(Gr(\mc{I},d_{+}),\mb{Q}), \\ \psi_{d_{+},d_{-}}: \mb{H}^{*}(Gr(\mc{I},d_{+}),\mb{Q})\to \mb{H}^{*}(Gr(\mc{I}^{\vee}[1],d_{-}),\mb{Q}).
 \end{align*}

 Now we come back to the moduli space of perverse coherent sheaves $M^{m}(c_{n})$. Let $\mc{I}$ be the universal sheaf over $S^{[n]}\times S$ and
 \begin{equation*}
   \mc{I}_{o}:=\mc{I}|_{S^{[n]}\times o}.
 \end{equation*}
 Then by Nakajima-Yoshioka \cite{1050282810787470592}, we have
 \begin{equation*}
  Gr_{S^{[n+\frac{l+l^{2}}{2}]}}(\mc{I}_{o}^{\vee}[1],l)\cong M^{l}(c_{n})\cong Gr_{S^{[n+\frac{-l+l^{2}}{2}]}}(\mc{I}_{o},l).
 \end{equation*}
 Hence the indicence variety of derived Grassmannians induces the maps
 \begin{align*}
  \psi_{l,l-1}:\bigoplus_{n\geq 0}\mb{H}^{*}(M^{l}(c_{n}),\mb{Q})\to \bigoplus_{n\geq 0}\mb{H}^{*}(M^{l-1}(c_{n}),\mb{Q}), \\
  \phi_{l-1,l}:\bigoplus_{n\geq 0}\mb{H}^{*}(M^{l-1}(c_n),\mb{Q})\to \bigoplus_{n\geq 0}\mb{H}^{*}(M^{l}(c_n),\mb{Q}), \\
  \psi_{l,l}:\bigoplus_{n \geq 0}\mb{H}^{*}(M^{l}(c_{n}),\mb{Q})\to \bigoplus_{n\geq 0}\mb{H}^{*}(M^{l}(c_{n-l}),\mb{Q}), \\
  \phi_{l,l}:\bigoplus_{n\geq 0}\mb{H}^{*}(M^l(c_{n-l}),\mb{Q})\to \bigoplus_{n\geq 0}\mb{H}^{*}(M^{l}(c_n),\mb{Q}).
\end{align*}
The $\mc{E}$-representation on the cohomology of moduli space of perverse coherent sheaves follows from the above operators:
\begin{theorem}[\cref{lastmain}]
  \label{thm:15}
  The operators
  \begin{align*}
    E:=\bigoplus_{l>0}\phi_{l-1,l},\quad F:=\bigoplus_{l>0}\psi_{l-1,l}, \quad K_{1}:=\bigoplus_{l\geq 0} \phi_{l,l}, \quad K_{-1}:=\bigoplus_{l\geq 0} \psi_{l,l}
  \end{align*}
  forms an graded admissible $\mc{E}$-representation on
  \begin{equation*}
      \bigoplus_{m=0}^{\infty}\bigoplus_{n=0}^{\infty}\mb{H}^{*}(M^{m}(c_{n}),\mb{Q}).
  \end{equation*}
\end{theorem}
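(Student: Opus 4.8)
The plan is to identify the four operators with the incidence correspondences of Jiang attached to a single complex, reduce the statement to the defining relations of $\mc{E}$ together with a grading check and an admissibility check, and then establish the relations by derived intersection on fibre products of incidence varieties.

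First I would make the geometry uniform. By Nakajima--Yoshioka \cite{1050282810787470592}, for all $l,n$ the moduli space $M^{l}(c_{n})$ is at the same time a derived Grassmannian $Gr(\mc{I}_{o},l)$ over $S^{[n+\frac{l^{2}-l}{2}]}$ and a derived Grassmannian $Gr(\mc{I}_{o}^{\vee}[1],l)$ over $S^{[n+\frac{l^{2}+l}{2}]}$, for the tor-amplitude $[0,1]$ complex $\mc{I}_{o}$. Under these two descriptions, $E$ and $F$ become the incidence correspondences attached to $\mc{I}_{o}$ that change $l$ by $\pm1$, and $K_{1},K_{-1}$ become the incidence correspondences that preserve $l$ and change $n$ by $\pm l$. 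Thus all four operators act through the explicit virtual fundamental classes of \cite{jiang2022grassmanian}, and the theorem reduces to checking: (i) the relations $K_{-1}K_{1}=FE=1$, $K_{-1}E=FK_{1}=0$, $K_{1}K_{-1}+EF=1$; (ii) that $K_{1}=\bigoplus_{l}\phi_{l,l}$ is homogeneous of degree $1$ for the grading under which \cref{thm62} is an isomorphism of graded vector spaces; and (iii) that the resulting graded $\mc{E}$-module is admissible in the sense of \cref{def:ad3}.

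Part (i) is the heart of the matter. Each relation is an equality of cohomological correspondences, so it follows from a geometric identity for the derived fibre product of two incidence varieties over a common Grassmannian, which one computes by base change in derived algebraic geometry and then reads off from the virtual class. I expect three patterns. For $FE=1$ and $K_{-1}K_{1}=1$ the two incidence varieties are inverse ``mutations'': using that $\mc{I}_{o}^{\vee}[1]$ is again of tor-amplitude $[0,1]$ and that the two tautological quotient complexes on an incidence variety fit into a distinguished triangle, one shows the derived fibre product is virtually the diagonal, so the composite is the identity. For $K_{-1}E=0$ and $FK_{1}=0$ the quotient conditions imposed by the two incidence varieties are incompatible, so the relevant derived fibre product is empty (equivalently, its virtual class vanishes for dimension reasons) and the composite is zero. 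The last relation $K_{1}K_{-1}+EF=1$ requires a genuine decomposition: on the derived incidence variety computing the diagonal correspondence there is a tautological section of a line bundle whose zero locus supports the $EF$-contribution and whose open complement supports the $K_{1}K_{-1}$-contribution, and one must check that the two virtual classes add up to the class of the diagonal. This is the cohomological shadow of the derived projective-bundle/blow-up decomposition behind the Nakajima--Yoshioka wall-crossing, and I expect it to be the main obstacle: it is the only relation that is neither a vanishing nor a clean inverse, and establishing it requires controlling an excess-intersection contribution and checking that the two strata glue to the full diagonal — a computation that must be made on the derived incidence variety, not merely its classical truncation.

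Parts (ii) and (iii) should be comparatively formal. For (ii), the grading is the one transported from the $F$-grading through \cref{thm62}, and $K_{1}=\bigoplus_{l}\phi_{l,l}$ raises it by $1$ by inspection of the source and target of each $\phi_{l,l}$; this is exactly the degree-$1$ condition of the categorical Boson--Fermion correspondence. For (iii), \cref{def:ad3} amounts to $K_{1}$ being injective on each graded piece together with stabilisation of the system; both are provided by Nakajima--Yoshioka, since $M^{r}(c_{n})\cong\hat{S}^{[n]}$ for $r\gg0$ and consecutive $M^{r}(c_{n}),M^{r+1}(c_{n})$ are connected by a wall-crossing. Combining (i)--(iii) gives the graded admissible $\mc{E}$-representation asserted in the theorem. (If the relations of (i) have already been proved in the paper as general properties of incidence correspondences of an arbitrary tor-amplitude $[0,1]$ complex, then (i) is simply the specialisation to $\mc{I}=\mc{I}_{o}$, and only (ii) and (iii) remain.)
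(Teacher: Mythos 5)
Your reduction to (i) the relations \cref{eq:EF}, (ii) a grading check and (iii) admissibility is the right skeleton, and your closing parenthesis is in fact what the paper does for (i): the relations are established once and for all in \cref{prop:3155} for an arbitrary rank-one tor-amplitude $[0,1]$ complex on a smooth proper base, and the theorem merely specialises them to $\mc{I}_{o}$ via \cref{thm35}. But the paper's route to \cref{prop:3155} is not the derived-fibre-product computation you outline. It lifts everything to the derived category: Jiang's semi-orthogonal decomposition (\cref{thm1}) yields the triangle \cref{eq21} and the identities \cref{eq22} for the Fourier--Mukai kernels, Caldararu's Mukai-vector formalism transports composition, identity and additivity over triangles to cohomological correspondences, and a filtration-by-cohomological-degree argument shows that the leading (virtual class) terms $\phi,\psi$ inherit the relations from $\Phi_{*},\Psi_{*}$. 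This is exactly how the paper sidesteps the step you yourself single out as the main obstacle: the relation $K_{1}K_{-1}+EF=1$ is the cohomological shadow of the SOD triangle and costs nothing, whereas your direct excess-intersection analysis of the diagonal on the incidence variety is announced but not executed. If you do not invoke \cref{prop:3155} (or an equivalent categorical input), your treatment of that relation is a genuine gap, not a routine verification.

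The second problem is the admissibility check. You cite \cref{def:ad3}, which is the condition on $\mc{H}$-modules; the relevant notion here is \cref{def:ad1}, whose two conditions are that $K_{\pm1}$ act as the identity on the degree-zero piece and that $K_{-1}$ is locally nilpotent on the positive-degree pieces. (Note also that in the grading of \cref{def:ad1} it is $E$, not $K_{1}$, that has degree $1$; the operators $K_{\pm1}=\bigoplus\phi_{l,l},\bigoplus\psi_{l,l}$ preserve the grading by $l$ and shift only the $q$-degree $n$ by $\pm l$.) Neither condition is delivered by injectivity of $K_{1}$ or by the stabilisation $M^{r}(c_{n})\cong\hat{S}^{[n]}$ for $r\gg0$, which is what you offer. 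The first condition holds because $Gr_{X}(\mc{E},0)\cong X$, so $\phi_{0,0}=\psi_{0,0}=\mathrm{id}$ on the $l=0$ summand; the second is the one-line observation the paper actually makes, namely that $\psi_{l,l}$ lowers $n$ by $l>0$ while $M^{0}(l,n)=\emptyset$ for $n<0$, so $K_{-1}^{N}$ kills any fixed class for $N$ large. As written, your part (iii) does not establish admissibility in the sense required by the theorem.
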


\subsection{Some related work}
\label{sec:16}
\subsubsection{The affine super-Yangian of $gl(1|1)$}
Although not explicitly pointed out in our paper, the representation we constructed is closely related to the quiver Yangian action of Li-Yamazaki \cite{Li2020} and Galakhov-Li-Yamazaki \cite{Galakhov2021}. Here we take an example where $S$ is the equivariant $\mathbb{C}^{2}$ (although it is not projective). Li-Yamazaki \cite{Li2020} defined the affine super-Yangian of $gl(1|1)$, which we denote as
\begin{equation*}
  Y_{\hbar_{1}, \hbar_{2}}(\widehat{gl(1|1)}),
\end{equation*}
as the $\mathbb{Q}(\hbar_{1}, \hbar_{2})$ algebra generated by $e_n^{\circ/\bullet}$, $f_n^{\circ/\bullet}$, $\psi_k^{\circ/\bullet}$, $n \in \mathbb{Z}_{\geq 0}$, $k \in \mathbb{Z}$ with relations:
\begingroup
\renewcommand*{\arraystretch}{1.5}
\begin{equation}\label{Yang_modes}
	\begin{array}{l}
		\left\{e^{\circ}_n,e^{\circ}_k\right\}=\left\{f^{\circ}_n,f^{\circ}_k\right\}=\left[\psi^{\circ}_n,e^{\circ}_k\right]=\left[\psi^{\circ}_n,f^{\circ}_k\right]=\left\{e^{\bullet}_n,e^{\bullet}_k\right\}=\left\{f^{\bullet}_n,f^{\bullet}_k\right\}=\left[\psi^{\bullet}_n,e^{\bullet}_k\right]=\left[\psi^{\bullet}_n,f^{\bullet}_k\right]=0\,,\\
		\left\{e_{n+2}^{\circ},e_{k}^{\bullet}\right\}-2\left\{e_{n+1}^{\circ},e_{k+1}^{\bullet}\right\}+\left\{e_{n}^{\circ},e_{k+2}^{\bullet}\right\}-\frac{\hbar_1^2+\hbar_2^2}{2}\left\{e_{n}^{\circ},e_{k}^{\bullet}\right\}+\frac{\hbar_1^2-\hbar_2^2}{2}\left[e_{n}^{\circ},e_{k}^{\bullet}\right]=0\,,\\
		\left\{f_{n+2}^{\circ},f_{k}^{\bullet}\right\}-2\left\{f_{n+1}^{\circ},f_{k+1}^{\bullet}\right\}+\left\{f_{n}^{\circ},f_{k+2}^{\bullet}\right\}-\frac{\hbar_1^2+\hbar_2^2}{2}\left\{f_{n}^{\circ},f_{k}^{\bullet}\right\}-\frac{\hbar_1^2-\hbar_2^2}{2}\left[f_{n}^{\circ},f_{k}^{\bullet}\right]=0\,,\\
		\left[\psi_{n+2}^{\circ},e_{k}^{\bullet}\right]-2\left[\psi_{n+1}^{\circ},e_{k+1}^{\bullet}\right]+\left[\psi_{n}^{\circ},e_{k+2}^{\bullet}\right]-\frac{\hbar_1^2+\hbar_2^2}{2}\left[\psi_{n}^{\circ},e_{k}^{\bullet}\right]+\frac{\hbar_1^2-\hbar_2^2}{2}\left\{\psi_{n}^{\circ},e_{k}^{\bullet}\right\}=0\,,\\
		\left[\psi_{n+2}^{\circ},f_{k}^{\bullet}\right]-2\left[\psi_{n+1}^{\circ},f_{k+1}^{\bullet}\right]+\left[\psi_{n}^{\circ},f_{k+2}^{\bullet}\right]-\frac{\hbar_1^2+\hbar_2^2}{2}\left[\psi_{n}^{\circ},f_{k}^{\bullet}\right]-\frac{\hbar_1^2-\hbar_2^2}{2}\left\{\psi_{n}^{\circ},f_{k}^{\bullet}\right\}=0\,,\\
		\left[\psi_{n+2}^{\bullet},e^{\circ}_{k}\right]-2\left[\psi_{n+1}^{\bullet},e_{k+1}^{\circ}\right]+\left[\psi_{n}^{\bullet},e_{k+2}^{\circ}\right]-\frac{\hbar_1^2+\hbar_2^2}{2}\left[\psi_{n}^{\bullet},e_{k}^{\circ}\right]-\frac{\hbar_1^2-\hbar_2^2}{2}\left\{\psi_{n}^{\bullet},e_{k}^{\circ}\right\}=0\,,\\
		\left[\psi_{n+2}^{\bullet},f_{k}^{\circ}\right]-2\left[\psi_{n+1}^{\bullet},f_{k+1}^{\circ}\right]+\left[\psi_{n}^{\bullet},f_{k+2}^{\circ}\right]-\frac{\hbar_1^2+\hbar_2^2}{2}\left[\psi_{n}^{\bullet},f_{k}^{\circ}\right]+\frac{\hbar_1^2-\hbar_2^2}{2}\left\{\psi_{n}^{\bullet},f_{k}^{\circ}\right\}=0\,,\\
		\left\{e^{\circ}_n,f^{\circ}_k\right\}=-\psi^{\circ}_{n+k},\quad \left\{e^{\bullet}_n,f^{\bullet}_k\right\}=-\psi^{\bullet}_{n+k},\quad \left\{e^{\circ}_n,f^{\bullet}_k\right\}=\left\{e^{\bullet}_n,f^{\circ}_k\right\}=0\,,\\
		\left[\psi_n^{\circ/\bullet},\psi_k^{\circ/\bullet}\right]=0\,.
	\end{array}
\end{equation}
and higher order relations which we will discuss in our future work.

When $S$ is the equivariant $\mathbb{C}^{2}$, the moduli space of perverse coherent sheaves was described as a quiver-like variety by Nakajima-Yoshioka \cite{1361418519756759552}. The affine super-Yangian $Y_{\hbar_{1}, \hbar_{2}}(\widehat{gl(1|1)})$ acts on the cohomology of moduli spaces of perverse coherent sheaves by Galakhov-Morozov-Tselousov \cite{Galakhov2023}. The operators in \cref{thm:15} actually come from the action of certain elements in $Y_{\hbar_{1}, \hbar_{2}}(\widehat{gl(1|1)})$, which we will discuss in future work.

\subsubsection{Cohomological Hall algebra (COHA)}
An equivalent definition of the affine super-Yangian $Y_{\hbar_{1}, \hbar_{2}}(\widehat{gl(1|1)})$ is the double COHA on the resolved conifold by Soibelmann-Rapcak-Yang-Zhao \cite{rapcak2020cohomological}. Actually, our paper should be regarded as a continuation of Section 7.2.2 of their paper.

By dimension reduction, the affine super-Yangian is also the double COHA of certain one-dimensional objects over a surface, which was defined by Diaconescu-Porta-Sala \cite{diaconescu2022cohomological}. A systematic computation of the double COHA will appear very soon \cite{Hecke}.

\subsubsection{Koseki's categorification}
Another work from which we draw great inspiration is the categorification of the derived category of coherent sheaves on the Hilbert scheme of points by Koseki \cite{koseki2021categorical}. To obtain more precise representation-theoretic relations, we apply the semi-orthogonal decomposition theorem of Jiang \cite{jiang2023derived} with explicit Fourier-Mukai kernels.

\subsection{Notations in this paper}
\subsubsection{Derived algebraic geometry}
All the varieties and derived schemes in this paper will be over $\mathbb{C}$. For a derived scheme $X$, we will denote $Perf(X)$ as the $\infty$-category of perfect complexes on $X$ and $QCoh(X)$ as the $\infty$-category of quasi-coherent complexes.

\subsubsection{Representation theory}
We will use the brackets $[-,-]$ and $\{-,-\}$ to denote the Lie bracket and Poisson bracket, respectively.

We denote $\mc{C}$ as the infinite-dimensional Clifford algebra, i.e., the $\mathbb{Q}$-algebra generated by $\{P_{i}, Q_{i}\}_{i \in \mathbb{Z}_{>0}}$ with relations
\begin{equation}
  \label{eq:cli}
  \{P_{i}, P_{j}\} = 0, \quad \{Q_{i}, Q_{j}\} = 0, \quad \{P_{i}, Q_{j}\} = \delta_{ij}.
\end{equation}
We also denote $\mc{H}$ as the infinite-dimensional Heisenberg algebra, i.e., the $\mathbb{Q}$-algebra generated by $\{H_{i}\}_{i \in \mathbb{Z} - \{0\}}$ with relations
\begin{equation*}
  [H_{i}, H_{j}] = i \delta_{i, -j}.
\end{equation*}
 
We denote $B$ as the Bosonic Fermion space, i.e. the polynomial ring of infinitely many variables $\mb{Q}[y_{1},y_{2},\cdots]$. It is generated by variables $y_i, i\in \mb{Z}_{>0}$ with the relation
\begin{equation*}
  [y_i,y_j]=0, \forall i,j>0.
\end{equation*}
Let $U:=\oplus_{i\in \mb{Z}_{>0}}\mb{Q}y_{i}$, then $B$ is the symmetric algebra of $U$. 
We also denote $F$ as the Fermionic Fock space, i.e. the super-polynomial ring of infinitely many variables $\mb{Q}[x_{1},x_{2},\cdots]$. It is generated by variables $x_i, i\in \mb{Z}_{>0}$ with the relation
\begin{equation*}
  \{x_i,x_j\}=0, \forall i,j>0.
\end{equation*}
Let $V:=\oplus_{i\in \mb{Z}_{>0}}\mb{Q}x_{i}$, then $F$ is the exterior algebra of $V$.
\subsection{Acknowledgements}
I want to thank Hiraku Nakajima, Andrei Negu\c{t}, Francesco Sala, Qingyuan Jiang, Jiakang Bao, Dmitry Galakhov, and Henry Liu for many interesting discussions on the subject.

\section{A categorical Boson-Fermion correspondence}
In this section, we demonstrate an equivalence between specific representations of $\mc{C}$ and $\mc{H}$ which can be regarded as a categorical version of a Boson-Fermion correspondence. The equivalence is established through a new algebra, denoted as $\mc{E}$, generated by $K_{1},K_{-1},E,F$ with the relations 
 \begin{align}
   \label{eq:EF}
    K_{-1}K_{1}=FE=1,\quad
     K_{-1}E=FK_{1}=0, \quad 
     K_{1}K_{-1}+EF=1.
  \end{align}
It naturally emerges from the orthogonal decomposition of a vector space
\begin{equation*}
  V\cong V_{0}\oplus V_{1}.
\end{equation*}
Then the respective operators of projections and embedding
\begin{equation*}
  F:V\to V_{1}, \quad E:V_{1}\to V, \quad K_{-1}:V\to V_{0}, \quad K_{1}:V_{0}\to V
\end{equation*}
satisfy the relations \cref{eq:EF}.
\subsection{Basic properties of \texorpdfstring{$\mc{E}$}{E} and \texorpdfstring{$\mc{E}$}{E}-representations}  Let
  \begin{equation*}
    E_{i}= K_{1}^{i}E, \quad F_{i}=FK_{-1}^{i}, \quad i\geq 0.
  \end{equation*}
Through straightforward calculations, we find that
\begin{lemma}
  \label{lem22}
  The operators $E_{i},F_{i}$ satisfy
  \begin{equation}
    \label{eq:ind}
    \sum_{i=0}^{l}E_{i}F_{i}=1-K_{1}^{l+1}K_{-1}^{l+1},\quad E_{i}F_{i}=K_{1}^{i}K_{-1}^{i}-K_{1}^{i+1}K_{-1}^{i+1};
  \end{equation}
  and
  \begin{align*}
    K_{1}E_{i}=E_{i+1}, \quad K_{-1}E_{i+1}=E_{i}, \quad
   & F_{i}K_{-1}=F_{i+1}, \quad F_{i+1}K_{1}=F_{i}, \\
   &    F_{i}E_{j}=\delta_{i,j}, \quad i,j\geq 0
  \end{align*}
\end{lemma}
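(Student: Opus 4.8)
The plan is to derive all the identities in \cref{lem22} directly from the four defining relations \eqref{eq:EF}, proceeding from the simplest to the ones requiring an inductive argument. First I would record the basic ``clean-up'' consequences of the axioms: from $K_{-1}K_1 = 1$ and $K_{-1}E = 0$ we get $K_{-1}^i K_1^i = 1$ and $K_{-1}^i E = 0$ for all $i \geq 0$ by iterating, and similarly $F K_1^i = 0$ and $F K_1^i K_{-1}^i$ behaves well. These immediately give the middle row of relations: $K_1 E_i = K_1 (K_1^i E) = K_1^{i+1} E = E_{i+1}$ is definitional, and $K_{-1} E_{i+1} = K_{-1} K_1^{i+1} E = K_1^i E = E_i$ uses only $K_{-1}K_1 = 1$; the two identities for $F_i$ are the mirror images, using $FE = 1$ being irrelevant here but $F_{i+1} K_1 = F K_{-1}^{i+1} K_1$, which needs a small lemma that $K_{-1}^{i+1} K_1 = K_{-1}^i$ (again iterate $K_{-1}K_1 = 1$ after moving $K_1$ past—wait, this needs care since $K_1$ and $K_{-1}$ need not commute). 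So the first genuine sub-step is to establish $K_{-1}^{i+1} K_1 = K_{-1}^{i}$, which follows because $K_{-1}^{i+1}K_1 = K_{-1}^{i}(K_{-1}K_1) = K_{-1}^i$.

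Next I would handle the orthonormality relation $F_i E_j = \delta_{ij}$. Here $F_i E_j = F K_{-1}^i K_1^j E$. When $i = j$ this is $F(K_{-1}^i K_1^i)E = F \cdot 1 \cdot E = FE = 1$. When $i < j$, write $K_{-1}^i K_1^j = (K_{-1}^i K_1^i) K_1^{j-i} = K_1^{j-i}$, so $F_i E_j = F K_1^{j-i} E = (F K_1) K_1^{j-i-1} E = 0$ using $FK_1 = 0$. When $i > j$, symmetrically $K_{-1}^i K_1^j = K_{-1}^{i-j}(K_{-1}^j K_1^j) = K_{-1}^{i-j}$, so $F_i E_j = F K_{-1}^{i-j} E = F K_{-1}^{i-j-1}(K_{-1}E) = 0$ using $K_{-1}E = 0$. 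This covers all cases.

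The main substantive step is \eqref{eq:ind}. I would first prove the single-index identity $E_i F_i = K_1^i K_{-1}^i - K_1^{i+1} K_{-1}^{i+1}$, then sum it telescopically to get $\sum_{i=0}^l E_i F_i = K_1^0 K_{-1}^0 - K_1^{l+1}K_{-1}^{l+1} = 1 - K_1^{l+1}K_{-1}^{l+1}$. For the single-index identity, $E_i F_i = K_1^i E F K_{-1}^i$, and by the third relation $EF = 1 - K_1 K_{-1}$, so $E_i F_i = K_1^i (1 - K_1 K_{-1}) K_{-1}^i = K_1^i K_{-1}^i - K_1^{i+1} K_{-1} K_{-1}^i = K_1^i K_{-1}^i - K_1^{i+1} K_{-1}^{i+1}$. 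The one point demanding attention throughout is that $\mc{E}$ is noncommutative and $K_1, K_{-1}$ do not commute, so every manipulation must move factors only using the established relations; in particular one must never assume $K_{-1}^i K_1^j$ simplifies beyond what the cancellation $K_{-1}K_1 = 1$ provides. I expect the ``hard part'' to be purely bookkeeping: making sure each of the three case distinctions in $F_i E_j = \delta_{ij}$ and each telescoping step cites only an already-proven identity, with no hidden commutativity assumption. Once the single-index forms are in hand, both displays in \eqref{eq:ind} and the four shift relations follow mechanically.
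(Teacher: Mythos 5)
Your proof is correct and is exactly the ``straightforward calculation'' the paper alludes to without writing out (the paper offers no proof of \cref{lem22}, merely asserting it follows by direct computation from \cref{eq:EF}). All steps check out: the shift relations use only $K_{-1}K_{1}=1$, the three-case analysis for $F_{i}E_{j}=\delta_{ij}$ correctly cancels from the inside using $K_{-1}^{i}K_{1}^{i}=1$ together with $FK_{1}=0$ and $K_{-1}E=0$, and the telescoping of $E_{i}F_{i}=K_{1}^{i}(1-K_{1}K_{-1})K_{-1}^{i}$ gives \cref{eq:ind}.
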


Let $P_{1}=EK_{-1}, Q_{1}=K_{1}F$ and
  \begin{align*}
      P_{i}=K_{1}P_{i-1}K_{-1}-EP_{i-1}F, \quad Q_{i}=K_{1}Q_{i-1}K_{-1}-EQ_{i-1}F ,\quad \text{ if } i>1.
  \end{align*}
in \cref{A1} we prove that
\begin{proposition}
  \label{prop12}
  The elements $\{P_{i},Q_{i}\}_{i\in \mb{Z}_{\geq 0}}$ satisfy the relations of $\mc{C}$, i.e. \cref{eq:cli}.
\end{proposition}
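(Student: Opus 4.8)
The plan is to verify the Clifford relations $\{P_i,P_j\}=0$, $\{Q_i,Q_j\}=0$, $\{P_i,Q_j\}=\delta_{ij}$ directly from the defining relations \cref{eq:EF} of $\mc{E}$, using the recursion $P_i=K_1P_{i-1}K_{-1}-EP_{i-1}F$ and $Q_i=K_1Q_{i-1}K_{-1}-EQ_{i-1}F$ together with the auxiliary operators $E_i,F_i$ from \cref{lem22}. The first step is to find closed expressions for $P_i$ and $Q_i$ in terms of $E_j,F_j$ and the $K_{\pm1}$; I expect $P_i$ to be a sum of terms of the form $E_a F_b$ with $a-b$ fixed (degree $+1$ in the grading where $K_1$ has degree $1$), and $Q_i$ similarly of degree $-1$. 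Concretely one should be able to prove by induction that $P_i=\sum_{k\ge 0}(-1)^{?}E_{k+1}F_{k}\cdot(\text{something})$ or more cleanly that $P_i$ acts on the Fermionic Fock space as the standard creation-type operator; the recursion $K_1(-)K_{-1}-E(-)F$ is exactly the operation that, on $E_aF_b$, sends it to $E_{a+1}F_{b+1}-E_{a+1}F_{b+1}$-type cancellations, so telescoping should occur.

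The key computational inputs are all contained in \cref{lem22}: the orthonormality $F_iE_j=\delta_{ij}$, the shift relations $K_1E_i=E_{i+1}$, $K_{-1}E_{i+1}=E_i$, $F_iK_{-1}=F_{i+1}$, $F_{i+1}K_1=F_i$, and the resolution-of-identity formulas $\sum_{i=0}^{l}E_iF_i=1-K_1^{l+1}K_{-1}^{l+1}$ and $E_iF_i=K_1^iK_{-1}^i-K_1^{i+1}K_{-1}^{i+1}$. Using $F_iE_j=\delta_{ij}$, a product like $(E_aF_b)(E_cF_d)$ collapses to $\delta_{bc}E_aF_d$, which makes products of the closed-form expressions for $P_i,Q_i$ tractable. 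I would first establish $\{P_i,P_j\}=0$ and $\{Q_i,Q_j\}=0$: once $P_i$ is written as a combination of $E_aF_b$ with $a>b$ (strictly, because of the degree shift), the products $P_iP_j$ and $P_jP_i$ consist of terms $E_aF_bE_cF_d$ with $b<a$, $d<c$; the $\delta_{bc}$ from $F_bE_c$ forces $b=c$, but then comparing the two orderings yields a sign flip and they cancel pairwise — this is the standard mechanism by which "lowering indices" operators anticommute. Then $\{P_i,Q_j\}=\delta_{ij}$: here the mixed products $E_aF_bE_cF_d$ with $a>b$ and $c>d$ again collapse via $\delta_{bc}$, and after summing one should recognize the telescoping sums $\sum E_iF_i$ and the identity $K_1K_{-1}+EF=1$ producing exactly $\delta_{ij}\cdot 1$.

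The main obstacle I anticipate is finding and proving the correct closed form for $P_i$ and $Q_i$ — the recursion mixes a "conjugation" $K_1(-)K_{-1}$ with a "correction" $-E(-)F$, and while the pattern ($P_i$ being essentially the transpose/adjoint-shifted image of $P_1=EK_{-1}$) is clear on the Fock space representation, proving it as an identity purely in $\mc{E}$ requires care with the noncommutativity and with the fact that $K_{-1}K_1=1$ but $K_1K_{-1}\ne 1$. A clean way around this may be to first prove the statement on the faithful representation $F$ (via \cref{prop:110}, the highest-weight $\mc{E}$-module), if faithfulness is available or can be arranged, reducing everything to the classical Boson–Fermion bookkeeping; otherwise one proceeds by a double induction on $i$ and $j$, using \cref{lem22} at each step to reduce the length of words in $K_1,K_{-1},E,F$. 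Either way, once the closed forms are in hand the anticommutator verifications are routine, if tedious, manipulations with the relations of \cref{lem22}. I would relegate the full induction to the appendix (\cref{A1}) as the statement already indicates.
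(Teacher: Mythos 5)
Your proposal does not go through as written, for two reasons. First, the anticipated closed form is wrong: $P_{i}$ is \emph{not} a linear combination of the operators $E_{a}F_{b}=K_{1}^{a}EFK_{-1}^{b}$. Already $P_{2}=K_{1}EK_{-1}^{2}-E^{2}K_{-1}F$, and the normal-form word $E^{2}K_{-1}F$ cannot be rewritten as any combination of the $E_{a}F_{b}$ using \cref{eq:EF}. More conceptually, each $E_{a}F_{b}$ annihilates every basis monomial of the Fock space except those whose smallest index is $b+1$ (it is a matrix-unit--type operator), whereas by \cref{lem:rho} the operator $P_{i}$ must act as wedging with $x_{i}$, which is supported on all monomials not containing $x_{i}$; so no finite sum of $E_{a}F_{b}$'s can equal $P_{i}$, and the collapsing mechanism $(E_{a}F_{b})(E_{c}F_{d})=\delta_{bc}E_{a}F_{d}$ that your computation relies on is unavailable. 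Unwinding the recursion actually gives $P_{i}=\sum_{w\in\{0,1\}^{i-1}}(-1)^{|w|}A_{w}\,EK_{-1}\,B_{w}$, where $A_{w}$ is the word in $\{K_{1},E\}$ spelled by $w$ and $B_{w}$ the reversed mirror word in $\{K_{-1},F\}$ --- a $2^{i-1}$-term sum whose pairwise products require a heavier bookkeeping than the one you describe. Second, your fallback of verifying the relations on the Fock space only proves an identity of operators on $F$, whereas \cref{prop12} asserts an identity in the algebra $\mc{E}$ itself; this requires faithfulness of $F$ as an $\mc{E}$-module, which you flag but do not supply and which the paper nowhere establishes (it does hold, since $\mc{E}$ is the Leavitt algebra of type $(1,2)$ and hence simple, but that is a genuine extra input, not a formality).

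For contrast, the paper's proof in \cref{A1} sidesteps both issues by packaging the recursion into the generating functions $P(z)=\sum P_{i}z^{i}$ and $Q(z)=\sum Q_{i}z^{i}$, which satisfy $P(z)=z(P_{1}+K_{1}P(z)K_{-1}-EP(z)F)$ and likewise for $Q$. From this one derives exchange relations of $P(z),Q(z)$ against $K_{\pm1},E,F$ and against $P_{1},Q_{1}$, and then shows that $\mf{P}:=P(z)P(w)+P(w)P(z)$ satisfies the homogeneous functional equation $\mf{P}=zw(K_{-1}\mf{P}K_{1}+F\mf{P}E)$, whose unique power-series solution with vanishing boundary coefficients is $0$; the mixed anticommutator satisfies the same equation with inhomogeneous term $zw$, forcing $P(z)Q(w)+Q(w)P(z)=\tfrac{zw}{1-zw}$, i.e.\ $\{P_{i},Q_{j}\}=\delta_{ij}$. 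Your suggested ``double induction reducing word length'' is essentially this argument with the generating-function packaging stripped away; if you want to complete your proof, I would adopt the functional-equation formulation rather than searching for a closed form that does not exist.
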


\begin{lemma}
  \label{lem17}
  Given an $\mc{E}$-module $W$, the actions of $K_{1}$ and $E$ on $W$ are injective, and the actions of $K_{-1}$ and $F$ on $W$ are surjective. Moreover, as vector spaces,
  \begin{equation*}
    K_{1}W = \ker(F), \quad EW = \ker(K_{-1}), \quad W = K_{1}W \oplus EW.
  \end{equation*}
\end{lemma}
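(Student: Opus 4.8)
The plan is to derive every assertion directly from the three defining relations in \cref{eq:EF}, using only elementary manipulations on the underlying vector space of $W$. First I would dispose of injectivity and surjectivity: the relation $K_{-1}K_{1}=1$ exhibits $K_{-1}$ as a left inverse of $K_{1}$, so $K_{1}$ acts injectively on $W$, and simultaneously exhibits $K_{-1}$ as having a right inverse, so $K_{-1}$ acts surjectively. Symmetrically, $FE=1$ gives that $E$ acts injectively and $F$ acts surjectively. In particular $K_{1}\colon W\to K_{1}W$ and $E\colon W\to EW$ are linear isomorphisms.

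Next I would identify the images with the kernels. The inclusion $K_{1}W\subseteq\ker F$ is immediate from $FK_{1}=0$. For the reverse inclusion, take $w\in\ker F$ and apply the "partition of unity" relation $K_{1}K_{-1}+EF=1$: since $Fw=0$ we get $w=K_{1}(K_{-1}w)\in K_{1}W$, so $\ker F=K_{1}W$. Running the same argument with the roles of $(K_{1},K_{-1})$ and $(E,F)$ interchanged — now using $K_{-1}E=0$ together with $K_{1}K_{-1}+EF=1$ — gives $\ker K_{-1}=EW$. Combined with the isomorphisms of the previous paragraph, this yields $K_{1}W\cong W\cong\ker F$ and $EW\cong W\cong\ker K_{-1}$ as vector spaces, via $K_{1}$ and $E$ themselves.

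Finally, for the decomposition $W\cong K_{1}W\oplus EW$: the relation $K_{1}K_{-1}+EF=1$ shows at once that $W=K_{1}W+EW$, since $w=K_{1}(K_{-1}w)+E(Fw)$ for every $w$. To see the sum is direct, suppose $K_{1}a=Eb$; applying $K_{-1}$ and using $K_{-1}K_{1}=1$ and $K_{-1}E=0$ forces $a=0$, hence $Eb=K_{1}a=0$, and then $b=FEb=0$ by $FE=1$. Thus $K_{1}W\cap EW=0$ and the decomposition follows. There is no real obstacle here — every step is a one-line consequence of a single relation — so the only care needed is bookkeeping of which relation is invoked where, and noting that all identifications are of vector spaces only, since $K_{1}W$ and $EW$ need not be $\mc{E}$-submodules. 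This is precisely the linear-algebra shadow of the (semi)orthogonal-decomposition picture described just after \cref{eq:EF}.
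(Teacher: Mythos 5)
Your proof is correct and follows essentially the same route as the paper: injectivity/surjectivity from $K_{-1}K_{1}=FE=1$, the kernel identifications from $FK_{1}=K_{-1}E=0$ together with the relation $K_{1}K_{-1}+EF=1$, and the direct-sum decomposition from the same relation. The only cosmetic difference is that you verify directness by applying $K_{-1}$ to an element $K_{1}a=Eb$ of the intersection, whereas the paper notes that the intersection lies in $\ker F\cap\ker K_{-1}$ and hence vanishes; both are one-line consequences of the same identities.
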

\begin{proof}
  The respective injective and surjective properties follow from
  \begin{equation}
    \label{eq:ker}
    K_{-1}K_{1} = FE = 1.
  \end{equation}
  As $K_{-1}E = FK_{1} = 0$, we have
  \begin{equation*}
    K_{1}W \subset \ker(F), \quad EW \subset \ker(K_{-1}).
  \end{equation*}
  As $K_{1}K_{-1} + EF = 1$, we have
  \begin{equation*}
    \ker(F) = (K_{1}K_{-1} + EF)\ker(F) = K_{1}K_{-1}\ker(F) \subset K_{1}W.
  \end{equation*}
  Hence $\ker(F) = K_{1}W$ and similarly $EW = \ker(K_{-1})$.
  Still by the fact that $K_{1}K_{-1} + EF = 1$, we have
  \begin{align*}
    W = (K_{1}K_{-1} + EF)W = K_{1}K_{-1}W + EFW \subset K_{1}W + EW, \\
    \ker(F) \cap \ker(K_{-1}) \subset \ker(K_{1}K_{-1} + EF) = 0
  \end{align*}
  and thus $W = K_{1}W \oplus EW$.
\end{proof}

\begin{remark}
  Any non-zero $\mc{E}$-module is infinite-dimensional: as $K_{1}$ and $W$ are injective, we have
  \begin{equation*}
    W\cong K_{1}W\oplus EW\cong W\oplus W.
  \end{equation*}
\end{remark}

\subsection{Highest representation theory of \texorpdfstring{$\mc{E}$}{E} and Fermionic Fock space}

\begin{definition}
  Given $\lambda \in \mb{Q}^{*}$, an $\mc{E}$-module $W$ is defined to be the highest weight representation of weight $\lambda$ if there exists $w\in W$ which generates $W$ and 
\begin{equation*}
    Fw=0, \quad K_1w=\lambda w.
  \end{equation*}
The Verma module of the highest weight representation of weight $\lambda$ is denoted as $F_{\lambda,\mc{E}}$, and the highest weight vector is denoted as $0\rangle $. We denote $F_{\mc{E}}:=F_{1,\mc{E}}$.
\end{definition}

Given $\lambda\in \mb{Q}^{*}$, there is an automorphism of $\mc{E}$ which maps
\begin{equation*}
  E\to \lambda E, \quad K_{1}\to \lambda K_{1}, \quad F\to \lambda^{-1} F, \quad K_{-1}\to \lambda^{-1} K_{-1}.
\end{equation*}
Under this auto equivalence, the representation $F_{\lambda,\mc{E}}$ is mapping to $F_{\mc{E}}$. 
  
The Fermionic Fock space $F$ has the basis
  \begin{equation*}
    x_{i_{1}}\wedge\cdots\wedge x_{i_{k}}
  \end{equation*}
  for any sequence $0<i_{1}<i_{2}<\cdots <i_{k}, k\geq 0$. 
Through straightforward calculations, we find that
\begin{lemma}
  \label{lem:rho}
  There is a $\mc{E}$-representation on the Fermionic Fock space $F$ such that
  \begin{align*}
&    K_{1}( x_{j_{1}}\wedge\cdots\wedge x_{j_{k}})=x_{j_{1}+1}\wedge\cdots\wedge x_{j_{k}+1}, \\
&    K_{-1}( x_{j_{1}}\wedge\cdots\wedge x_{j_{k}})=x_{j_{1}-1}\wedge\cdots\wedge x_{j_{k}-1}, \\
&    E(x_{j_{1}}\wedge\cdots\wedge x_{j_{k}})=x_{1}\wedge x_{j_{1}+1}\wedge\cdots\wedge x_{j_{k}+1}, \\
&   F(x_{j_{1}}\wedge\cdots\wedge x_{j_{k}})=\frac{\partial}{\partial x_{1}}(x_{j_{1}-1}\wedge\cdots\wedge x_{j_{k}-1}).
  \end{align*}
  where we set $x_{0}:=0$. For the induced $\mc{C}$-representation on $F$, the action of $P_{i}$ is  wedging $x_{i}$ and the action of $Q_{i}$ is $\frac{\partial}{\partial x_{i}}$.
\end{lemma}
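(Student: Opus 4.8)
The plan is to define $K_1$, $K_{-1}$, $E$ and the operator $F$ on the Fock space by the stated formulas on the monomial basis $x_{i_1}\wedge\cdots\wedge x_{i_k}$ ($0<i_1<\cdots<i_k$, $k\geq 0$), extend $\mb{Q}$-linearly with $1\in\mc{E}$ acting by $\mathrm{id}$, and then check the three relations \eqref{eq:EF}. I would do this by recognizing these operators as an instance of the embedding/projection package described right after \eqref{eq:EF}: decompose the Fock space as $F_{(0)}\oplus F_{(1)}$, where $F_{(0)}$ is spanned by the monomials with $i_1>1$ (those not containing $x_1$) and $F_{(1)}$ by those with $i_1=1$ (containing $x_1$). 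Shifting all indices up by one identifies the Fock space with $F_{(0)}$, and deleting the leading $x_1$ and then shifting down identifies it with $F_{(1)}$; under these identifications $K_1$ and $E$ are the inclusions of $F_{(0)}$ and $F_{(1)}$, while $K_{-1}$ and $F$ are the projections onto $F_{(0)}$ and $F_{(1)}$. Then \eqref{eq:EF} holds for free: $K_{-1}K_1=FE=1$ and $K_{-1}E=FK_1=0$ because the two inclusions land in complementary summands, and $K_1K_{-1}+EF=1$ because $K_1K_{-1}$ and $EF$ are the two complementary projections. One can equally check \eqref{eq:EF} by hand on basis vectors, splitting into the cases $i_1=1$ and $i_1>1$ and using the convention $x_0=0$ to kill the vanishing terms; this, together with the observation that each formula outputs a (possibly zero) basis monomial with strictly increasing positive indices, is the ``straightforward calculation'' of the statement.

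For the remaining claims — that $P_i$ acts by $\psi_i:=x_i\wedge(-)$ and $Q_i$ by $\psi_i^{\ast}:=\partial/\partial x_i$ — I would induct on $i$ using the recursion $P_1=EK_{-1}$, $Q_1=K_1F$ and $P_i=K_1P_{i-1}K_{-1}-EP_{i-1}F$, $Q_i=K_1Q_{i-1}K_{-1}-EQ_{i-1}F$ for $i>1$. The base cases $P_1=\psi_1$ and $Q_1=\psi_1^{\ast}$ are a one-line check on monomials. For the step, evaluate $P_i$ (resp.\ $Q_i$) on a monomial $v$ and split according to whether $x_1$ occurs in $v$. If it does not, then $F(v)=0$, the term $K_1P_{i-1}K_{-1}$ survives, and ``shift down, wedge (resp.\ contract) $x_{i-1}$, shift up'' reproduces $\psi_i(v)$ (resp.\ $\psi_i^{\ast}(v)$). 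If $x_1$ does occur, then $K_{-1}(v)=0$, the term $-EP_{i-1}F$ survives, and ``delete $x_1$, shift down, wedge (resp.\ contract) $x_{i-1}$, shift up, prepend $x_1$'' reproduces $\psi_i(v)$ (resp.\ $\psi_i^{\ast}(v)$) up to the sign $-1$ coming from moving $x_i$ past $x_1$, which is exactly the sign in the recursion. The Koszul signs agree on the two sides because a uniform index shift, and prepending or deleting $x_1$ at the front, never alter the relative order of the remaining entries; and the degenerate cases — wedging a generator already present, or contracting one that is absent — vanish consistently on both sides.

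The only real obstacle is this sign-and-degeneracy bookkeeping in the inductive step; I expect the fiddliest point to be $i=2$, where $P_{i-1}=\psi_1$ is itself ``wedge with $x_1$'' and one must check it interacts correctly with the $E$ and $F$ surrounding it in the recursion, but nothing conceptual is hidden there. Once the identification is in place, \cref{prop12} forces $\psi_i$ and $\psi_i^{\ast}$ to satisfy the Clifford relations \eqref{eq:cli}, which is a welcome consistency check (and is the classical fact that wedging and contraction generate the Clifford algebra on $V\oplus V^{\ast}$); alternatively one could verify \eqref{eq:cli} directly and deduce the identification, but running the recursion keeps the argument internal to $\mc{E}$.
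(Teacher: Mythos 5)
Your proposal is correct and is exactly the ``straightforward calculation'' the paper leaves implicit: the splitting of the Fock space into monomials containing or not containing $x_{1}$ is precisely the (semi)orthogonal-decomposition picture the paper uses to motivate $\mc{E}$ right after \eqref{eq:EF}, and your induction on $i$ with the case split on whether $x_{1}$ occurs in the monomial is the natural way to unwind the recursion defining $P_{i},Q_{i}$. One point worth recording explicitly: for the relations to hold, $F$ must be read as ``contract with $x_{1}$, then shift all indices down'' (as your decomposition implicitly does), since the literal operator order in the displayed formula, combined with the convention $x_{0}:=0$, would give $FE=0$ rather than $FE=1$.
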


\begin{proposition}
  \label{prop:110}
  We have an isomorphism of $\mc{E}$-representation
  \begin{equation*}
    F_{\mc{E}}\cong F
  \end{equation*}
  \end{proposition}
\begin{proof}
  Let $|1\rangle$ be the highest weight vector of $F_{\mc{E}}$. Then the map
  \begin{equation*}
    |1\rangle \to 1
  \end{equation*}
  induces a morphism from $F_{\mc{E}}$ to $F$. We notice that all the vectors
  \begin{equation*}
    E_{i_{1}} E_{i_{2}-i_{1}}\cdots E_{i_{n}-i_{n-1}}|1\rangle 
  \end{equation*}
  where $0\leq i_{1}\leq i_{2}\leq \cdots \leq i_{n}$
  forms a basis of $F_{\mc{E}}$, and its image is
  \begin{equation*}
    x_{i_{1}}\wedge x_{i_{2}+1}\wedge \cdots \wedge x_{i_{n}+n}.
  \end{equation*}
  Hence the map from $F_{\mc{E}}$ to $F$ is an isomorphism.

   Now we prove that the Fermionic Fock space $F$ is irreducible as a $\mc{E}$-module. By associating all $x_i$ with degree $1$, we give a $\mb{Z}_{\geq 0}$-grading on $F$. Let $a=\oplus_{i=1}^{N} a_{i}\in F$ such that $a_{i}$ has degree $i$, we take a monomial
  \begin{equation*}
    x_{j_{1}}\wedge x_{j_{2}}\wedge \cdots x_{j_{N}}
  \end{equation*}
  in $a_{N}$ and assume its coefficient is $1$. Then
  \begin{equation*}
  F_{j_{N}-j_{N-1}-1}\cdots F_{j_{1}-1}a=1.
\end{equation*}
Thus the module $\mc{E}$ is irreducible.
\end{proof}

\subsection{Admissible representations and Boson-Fermion correspondence}
We consider the category of graded $\mc{E}$-modules (resp. $\mc{C}$-modules), 
\begin{equation*}
  W\cong \bigoplus_{i\in \mb{Z}}W_{i}
\end{equation*}
such that the degree of $E,F,K_{\pm 1}$ are $1,-1,0$ respectively (resp. the degree of $P_{i}$ is $1$ and $Q_{i}$ is $-1$).
\begin{definition}
  \label{def:ad1}
  A $\mb{Z}_{\geq 0}$-graded $\mc{E}$-module
  \begin{equation*}
    W\cong \bigoplus_{i=0}^{\infty}W_{i}
  \end{equation*}
  is called admissible if the action of $K_{-1}$ on $W_{i}$ is identity when $i=0$ and locally nilpotent when $i>0$.
\end{definition}
\begin{remark}
  By definition we also have $K_{1}|_{W_{0}}=id$.
\end{remark}
\begin{definition}
  \label{def:ad2}
  A graded $\mc{C}$-module 
  \begin{equation*}
    W:=\bigoplus_{i\geq 0}W_{i}
  \end{equation*}
  is defined to be admissible if
  \begin{enumerate}
    \item for any $x\in W$, $Q_{m}x=0$ when $m\gg 0$.
  \item We have
    \begin{equation*}
      W_{0}\cong \bigcap_{i>0}ker(Q_{i}).
    \end{equation*}
  \end{enumerate}
\end{definition}

\begin{example}
On the Fermionic Fock space, we associate $x_{i}$ with degree $1$. Then $F$ is both a graded admissible $\mc{E}$-module and a graded admissible $\mc{C}$-module.
\end{example}

\begin{example}
  We consider the Bosonic Fock space
\begin{equation*}
  B:=\bigoplus_{i=0}^{\infty}V\cong \mb{Q}[y_{1},y_{2},\cdots ].
\end{equation*}
It has the $\mc{H}$-representation structure by
\begin{equation*}
  \begin{cases}
    H_{k}\to y_{k}, \quad k>0, \\
    H_{k}\to \frac{\partial}{\partial y_{-k}}, \quad k<0.
  \end{cases}
\end{equation*}
For a sequence of non-negative integers $\lambda=(n_{1},n_{2},\cdots )$ such that $n_{m}=0$ when $m\gg 0$, we denote $y_{\lambda}:=\prod_{i=1}^{\infty}y_{i}^{n_{i}}$, which is a monomial of finite variables.
\end{example}

We also introduce the admissible $\mc{H}$-representations:
\begin{definition}
  \label{def:ad3}
  Given a sequence of non-negative integers $\lambda=(n_{1},n_{2},\cdots )$ such that $n_{m}=0$ when $m\gg 0$, we denote
  \begin{equation*}
   |\lambda|=\sum_{i=1}^{\infty}in_{i}, \quad H_{\lambda}=\prod_{i=1}^{\infty}H_{i}^{n_{i}}, \quad H_{-\lambda}=\prod_{i=1}^{\infty}H_{-i}^{n_{i}}.
  \end{equation*}
  An $\mc{H}$-representation $W$ is called admissible, if for any $x\in W$ there exists an integer $N>0$ such that $H_{-\lambda}x=0$ for any $|\lambda|>N$.
\end{definition}
\begin{remark}
  The $\mc{H}$-representation $B$ is admissible.
\end{remark}
In \cref{sec:grade,sec12,sec:1.3}, we establish the following equivalence
\begin{proposition}
  \label{cor:229}
  There is an equivalence among the abelian category of admissible $\mc{H}$-modules, graded admissible $\mc{C}$-modules, and graded admissible $\mc{E}$-modules. 
\end{proposition}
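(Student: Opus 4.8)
The plan is to establish the equivalence by constructing explicit functors in each direction and showing they are mutually inverse, using the structural results already available. The key observation is that all three categories should be equivalent to the category of plain $\mb{Q}$-vector spaces via a ``take the lowest graded piece'' or ``take the vacuum'' functor, so the cleanest strategy is to prove each of the three categories is equivalent to $\mathrm{Vect}_{\mb{Q}}$ and then compose. I would first treat the $\mc{E}$-module side, since \cref{lem22} and \cref{lem17} already give the skeleton: for a graded admissible $\mc{E}$-module $W=\bigoplus_{i\geq 0}W_i$, \cref{lem17} shows $W\cong K_1 W\oplus EW$ with $K_1W\cong\ker F$ and $EW\cong\ker K_{-1}$, and iterating this decomposition together with the telescoping identity $\sum_{i=0}^{l}E_iF_i=1-K_1^{l+1}K_{-1}^{l+1}$ and the admissibility condition (2) (that $K_{-1}^N$ kills everything in positive degree) shows that $W$ is freely built from $W_0$ by applying the operators $E_{i_1}E_{i_2-i_1}\cdots$, exactly as in the proof of \cref{prop:110}. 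This gives a functor $W\mapsto W_0$ and an inverse functor $U\mapsto U\otimes_{\mb{Q}}F_{\mc{E}}$ (more precisely, $U\otimes_{\mb{Q}}F$ with the $\mc{E}$-action of \cref{lem:rho} on the second factor), and one checks these are mutually quasi-inverse; exactness and additivity are automatic since everything is $\mb{Q}$-linear and the decompositions are functorial.

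Next I would handle the $\mc{C}$-module side. By \cref{prop12}, the elements $P_i,Q_i$ built from $K_{\pm1},E,F$ give a map from $\mc{C}$ to $\mc{E}$ (or at least an action of $\mc{C}$ on any $\mc{E}$-module), so every graded admissible $\mc{E}$-module restricts to a graded $\mc{C}$-module; I must check the admissibility conditions of \cref{def:ad2} are preserved — condition (1) (eventual annihilation by $Q_m$) follows from admissibility condition (2) for $\mc{E}$ since the $Q_i$ are polynomials in $K_{-1},E,F$ raising or fixing nothing that survives, and condition (2) ($W_0=\bigcap_{i>0}\ker Q_i$) should follow from the identification of $W_0$ with the vacuum. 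Conversely, given a graded admissible $\mc{C}$-module, the standard Clifford-algebra argument (the infinite wedge construction: $W\cong W_0\otimes_{\mb{Q}}F$ where $F$ is the Fock space of $\mc{C}$, built by applying strings $P_{i_1}\cdots P_{i_k}$ to $W_0$) reconstructs it, and one verifies the resulting $\mc{C}$-action extends uniquely to an $\mc{E}$-action compatible with the grading — here one uses that $K_1,K_{-1},E,F$ can be recovered from the $P_i,Q_i$ and the grading, essentially inverting the formulas of \cref{prop12}. So again $W\mapsto W_0$ is an equivalence with $\mathrm{Vect}_{\mb{Q}}$, and it is compatible with the $\mc{E}$-side equivalence.

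For the $\mc{H}$-module side I would use the classical boson-fermion correspondence as a template: from a graded admissible $\mc{C}$-module one produces the Heisenberg generators $H_k$ as the ``current'' bilinears $\sum_j P_{j}Q_{j+k}$ (suitably interpreted so that the sums act with finitely many nonzero terms on each vector, which is exactly what admissibility condition (1) of \cref{def:ad2} guarantees), and checks the commutation relations $[H_i,H_j]=i\delta_{i,-j}$ by the usual Wick-type computation in the Clifford algebra. Admissibility of the resulting $\mc{H}$-module (\cref{def:ad3}: eventual annihilation by $H_{-\lambda}$ for $|\lambda|$ large) must be matched against the $\mc{C}$-admissibility, and the vacuum $W_0$ is again $\bigcap_{i>0}\ker H_i$. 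In the other direction, an admissible $\mc{H}$-module is reconstructed as $W_0\otimes_{\mb{Q}}B$ with $B$ the bosonic Fock space, and one recovers a $\mc{C}$-structure via vertex-operator expressions for $P_i,Q_i$ in terms of the $H_k$ — but I can largely finesse this by noting I only need an equivalence of abelian categories, not a strict match of vertex operators, so it suffices to check $W\mapsto W_0$ is essentially surjective, full, and faithful on each side and that the three ``$W\mapsto W_0$'' functors agree under the comparisons. The main obstacle, I expect, is the $\mc{H}$-versus-$\mc{C}$ comparison: making the infinite current sums $\sum_j P_jQ_{j+k}$ well-defined requires carefully exploiting the grading and the admissibility finiteness conditions, and verifying that an abstract admissible $\mc{H}$-module indeed arises this way (as opposed to merely constructing the functor one way) is where the real content of the boson-fermion correspondence lives; the $\mc{E}\leftrightarrow\mc{C}$ and $\mc{E}\leftrightarrow\mathrm{Vect}$ parts are essentially bookkeeping on top of \cref{lem22}, \cref{lem17}, \cref{prop12}, and \cref{prop:110}.
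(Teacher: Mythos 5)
Your overall strategy differs from the paper's: you reduce all three categories to $\mathrm{Vect}_{\mb{Q}}$ via the functor $W\mapsto W_{0}$, whereas the paper proves \cref{cor:229} by assembling four explicit pairwise functors (\cref{prop:213} and \cref{prop125} for $\mc{E}\leftrightarrow\mc{C}$, \cref{224} and \cref{prop:228} for $\mc{H}\leftrightarrow\mc{E}$) and only identifies everything with $\mathrm{Vect}_{\mb{Q}}$ afterwards, in \cref{fermion-boson}. Your treatment of the $\mc{E}$- and $\mc{C}$-sides is workable: the decomposition $W_{i}=\bigoplus_{l\geq 0}E_{l}(F_{l}W_{i})$ coming from \cref{lem22} together with admissibility condition (2) does give $W\cong W_{0}\otimes F$, paralleling \cref{prop:110}, and transporting the $\mc{E}$-structure of \cref{lem:rho} across the Clifford Fock-space decomposition is a legitimate way to avoid the inductive construction of $K_{\pm1}$ via $\Lambda$ and $\Gamma_{\pm1}$ in \cref{prop125} (though note that the formulas of \cref{prop12} are not literally ``invertible'' --- the image of $\mc{C}$ in $\mc{E}$ does not contain $K_{1}$, which is why the paper characterizes $K_{\pm1}$ by the conditions $K_{\pm1}|_{W_{0}}=id$ and $\Lambda K_{\pm1}=\Gamma_{\pm1}(K_{\pm1})$ rather than by a closed formula).

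The genuine gap is on the $\mc{H}$-side. To conclude that $W\mapsto W_{0}$ is an equivalence from admissible $\mc{H}$-modules to $\mathrm{Vect}_{\mb{Q}}$ you must prove that every admissible $\mc{H}$-module is isomorphic to $W_{0}\otimes_{\mb{Q}}B$; saying it ``suffices to check the functor is essentially surjective, full and faithful'' is circular, because fullness and the identification of a quasi-inverse \emph{are} that reconstruction statement, and you explicitly defer it (``where the real content \dots lives''). The currents $\sum_{j}P_{j}Q_{j+k}$ only give the direction $\mc{C}\to\mc{H}$; the reverse direction cannot be finessed by classical vertex-operator formulas, since those require recovering the filtration $W_{i}=\bigcap_{l>i}\ker(H_{-l})$ and a splitting of each step. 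The paper's device for exactly this point is the operator
\begin{equation*}
  \mc{L}_{k}=-\sum_{i=1}^{\infty}\frac{1}{k^{i}i!}H_{k}^{i-1}H_{-k}^{i},
\end{equation*}
well defined by admissibility, which satisfies $\mc{L}_{k}H_{k}=id$ and $H_{-k}(1-H_{k}\mc{L}_{k})=0$ (\cref{224}); these identities give the splitting $(1-H_{k}\mc{L}_{k})W_{k}\subset W_{k-1}$ and hence $W_{k}\cong W_{k-1}\otimes\mb{Q}[y_{k}]$ by induction, which is what makes $W\mapsto W_{0}$ an equivalence on the $\mc{H}$-side. Without this operator (or an equivalent left inverse of $H_{k}$ annihilating the lower filtration step), your argument does not close.
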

\subsection{Graded $\mc{E}$-modules}
\label{sec:grade}
We first establish the equivalence between the category admissible graded $\mc{E}$-modules and graded $\mc{C}$-modules. For any $\mc{E}$-module $W$, the morphsim from $\mc{C}$ to $\mc{E}$ induces a $\mc{C}$-representation on $W$.
\begin{proposition}
  \label{prop:213}
  If $W$ is an admissible graded $\mc{E}$-module, the induced $\mc{C}$-module on $W$ is also admissible.
\end{proposition}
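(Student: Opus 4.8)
The plan is to translate the defining properties of an admissible graded $\mc{E}$-module into the two admissibility conditions for the induced $\mc{C}$-module, using the explicit formulas for $P_i, Q_i$ in terms of $K_{\pm 1}, E, F$. The key technical input is an explicit enough description of how $P_i$ and $Q_i$ act relative to the filtration by powers of $K_{-1}$. Recall $Q_1 = K_1 F$ and inductively $Q_i = K_1 Q_{i-1} K_{-1} - E Q_{i-1} F$; a parallel recursion defines $P_i$ starting from $P_1 = E K_{-1}$. So my first step is to prove, by induction on $i$, a "degree estimate": there are polynomials in $K_{\pm 1}, E, F$ expressing $Q_i$ as a sum of words each of which \emph{ends} (reading right to left, i.e. acts first) in a factor that lowers the $K_{-1}$-order — concretely I expect something like $Q_i = K_1^{i-1} F K_{-1}^{i-1} \,+\, (\text{terms that are each left-divisible by } E)$, or more usefully an identity showing $Q_i$ maps $\ker(K_{-1}^{N})$ into $\ker(K_{-1}^{N'})$ for $N' $ controlled by $N$ and $i$.

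**Next I would** establish condition (1) of \cref{def:ad2}, namely that for any $x \in W$ there is $N>0$ with $Q_m x = 0$ for all $m > N$. Here is where admissibility of $W$ as an $\mc{E}$-module enters: by \cref{def:ad1}(2), for each homogeneous $x \in W_i$ with $i>0$ there is $N_0$ with $K_{-1}^{N_0} x = 0$, and on $W_0$ we have $K_{-1} = \mathrm{id}$. Using the recursion for $Q_m$ together with \cref{lem22} (in particular $E_i F_i = K_1^i K_{-1}^i - K_1^{i+1} K_{-1}^{i+1}$ and the commutation rules $K_{-1}E_{i+1} = E_i$, $F_{i+1}K_1 = F_i$), I want to show that $Q_m$ for $m$ large enough necessarily contains a factor $K_{-1}^{m}$ (or $F K_{-1}^{m-1}$) acting first, so that $Q_m x = 0$ once $m$ exceeds the vanishing order $N_0$. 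The grading is also used: $Q_m$ has degree $-1$ while $x$ sits in bounded degree, and since $W = \bigoplus_{i\geq 0} W_i$ is bounded below, for $x$ of degree $i$ we need only worry about finitely many... actually the subtlety is that a single $Q_m$ does not shift degree by $m$, so I genuinely need the $K_{-1}$-nilpotence, not just the grading. I would make the $K_{-1}^m$-divisibility of $Q_m$ precise as a lemma proved by the same induction as in \cref{A1}.

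**Then I would** verify condition (2), $W_0 \cong \bigcap_{i>0} \ker(Q_i)$. One inclusion: on $W_0$, $K_{-1} = \mathrm{id}$ and $K_1 = \mathrm{id}$, and I want $Q_i|_{W_0} = 0$ for all $i > 0$; this should follow because $Q_i$ carries $W_0$ (degree $0$) to degree $-1$, which is the zero space since $W$ is concentrated in non-negative degrees — so in fact $Q_i|_{W_0} = 0$ \emph{for free} from the grading, giving $W_0 \subseteq \bigcap \ker(Q_i)$. The reverse inclusion is the substantive part: suppose $x$ is homogeneous of degree $i>0$ and $Q_j x = 0$ for all $j>0$; I must derive $x = 0$. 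Here I would use that $P_i, Q_i$ realize the Clifford action where, under the Fock-space model of \cref{lem:rho}, $P_i$ is wedging with $x_i$ and $Q_i$ is $\partial/\partial x_i$ — and more structurally, $\sum_i P_i Q_i$ should act as a grading-type operator (a "number operator"). Concretely I expect an identity, provable from \cref{lem22}, of the form $\sum_{i=1}^{\infty} P_i Q_i = $ (the degree operator) $-$ (something vanishing), or at least $\sum_{i\geq 1} P_i Q_i$ acting as multiplication by the degree on each $W_i$; combined with the admissibility of condition (1) (so the sum is locally finite) this forces any degree-$i$ vector killed by all $Q_j$ to vanish when $i>0$.

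**The main obstacle** I anticipate is making the "$Q_m$ is divisible by a high power of $K_{-1}$" claim both true and clean: the recursion $Q_i = K_1 Q_{i-1} K_{-1} - E Q_{i-1} F$ mixes a term that increases the relevant order with one that involves $E$ and $F$, and one must track how the $E$-terms interact with the grading and with $\ker(K_{-1}^N)$ — naively the $E Q_{i-1} F$ term looks dangerous. I would resolve this by proving the stronger statement that $Q_i$ maps the $i$-th step of the $K_{-1}$-order filtration to zero, i.e. $Q_i \cdot \ker(K_{-1}^{i}) \cap W_{>0} = 0$, or equivalently that every word in the expansion of $Q_i$ begins (acts-first) with either $K_{-1}^i$ or with $F$ preceded by enough $K_{-1}$'s, exploiting $F = F_0$ and $F K_{-1}^k = F_k$, $F_k E_j = \delta_{jk}$ from \cref{lem22}. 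Once that divisibility lemma is in hand, all three verifications fall out; the rest is the routine bookkeeping that I would relegate, as the authors do for \cref{prop12}, to an appendix-style computation.
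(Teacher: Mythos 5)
Your overall architecture is reasonable, and the forward inclusion $W_0\subseteq\bigcap_{i>0}\ker(Q_i)$ by degree reasons is correct, but the key lemma you propose for condition (1) of \cref{def:ad2} is false. You want to show that $Q_m$ kills $\ker(K_{-1}^{N_0})$ once $m>N_0$, i.e.\ that $Q_m$ is ``right-divisible'' by a high power of $K_{-1}$, and you even propose the sharper statement $Q_i\bigl(\ker(K_{-1}^{i})\cap W_{>0}\bigr)=0$. Test this on the Fermionic Fock module of \cref{lem:rho}: for $x=x_1\wedge x_{100}$ one has $K_{-1}x=x_0\wedge x_{99}=0$, so the $K_{-1}$-vanishing order of $x$ is $1$, yet $Q_{100}x=\tfrac{\partial}{\partial x_{100}}(x_1\wedge x_{100})=\pm x_1\neq 0$. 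The nilpotence order of $K_{-1}$ on $x$ simply does not control which $Q_m$ survive, because $Q_m$ is not of the form $A\cdot K_{-1}^{m}$: unwinding the recursion with \cref{lem22} gives, on vectors with $K_{-1}^{N}x=0$, the expansion $Q_m x=K_1^{m}F_{m-1}x-\sum_{l=0}^{m-2}E_lQ_{m-l-1}F_lx$, which involves \emph{all} of $F_0,\dots,F_{m-1}$, not just the top one. The correct proof of condition (1) is an induction on the grading degree $i$ rather than on the $K_{-1}$-order: write $x=\sum_{l=0}^{N-1}E_l(F_lx)$ by \cref{eq:ind}, use $Q_mE_l=0$ for $m\le l$, $Q_{l+1}E_l=K_1^{l+1}$ and $Q_mE_l=-E_lQ_{m-l-1}$ for $m>l+1$, and apply the inductive hypothesis to the finitely many elements $F_lx\in W_{i-1}$.

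For the reverse inclusion $\bigcap_{i>0}\ker(Q_i)\subseteq W_0$ you also take a genuinely different route from the paper. The paper argues directly: if $0\neq x\in W_i$ with $i>0$, then by admissibility and \cref{eq:ind} not all $F_Mx$ can vanish; taking $M_0$ minimal with $F_{M_0}x\neq 0$, the recursion collapses (the $EQF$-terms die because $F_lx=0$ for $l<M_0$) to $Q_{M_0+1}x=K_1^{M_0+1}F_{M_0}x\neq0$ by injectivity of $K_1$. Your number-operator argument can be made to work, but you must not quote \cref{lemA4}: its proof (via \cref{lem:comm}) already presupposes that the $\mc{C}$-module is admissible, which is exactly what you are trying to establish. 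You would need to prove $\sum_jP_jQ_j|_{W_i}=i\cdot\mathrm{id}$ independently, e.g.\ by verifying $\bigl(\sum_jP_jQ_j\bigr)E_l=E_l\bigl(\sum_jP_jQ_j+1\bigr)$ from the commutation identities above and inducting on the degree; this also silently uses condition (1) to make the sum locally finite, so the order of your steps matters. As it stands, the identity is left as an expectation, and the condition (1) argument it rests on is broken, so the proposal has a genuine gap.
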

\begin{proof}
  First we prove that for any $x$ in $W$, $Q_{m}x=0$ when $m\gg 0$ by induction. If $x\in W_{0}$, it follows from the fact that $Q_{1}x=0$ and $Fx=0$. Assuming it is true for $x\in W_{i}$, then for $x\in W_{i+1}$, we have that for some $N>0$
  \begin{equation*}
    x=\sum_{i=1}^N E_{i}F_{i}x
  \end{equation*}
By \cref{eqqq} and induction, we have 
\begin{equation*}
  Q_{m}E_{i}F_{i}x=E_{i}Q_{m-i}F_{i}x=0
\end{equation*}
when $m\gg 0$. Hence $Q_{m}x=0$ for $m\gg 0$. 
 
Now we prove that
  \begin{equation*}
    W_{0}\cong \bigcap_{i>0} ker(Q_{i}).
  \end{equation*}
  First we notice that for any non-zero $x\in \oplus_{i>0}W_{i}$, there exists $M\geq 0$ such $F_{M}x\neq 0$. Otherwise, by \cref{eq:ind}, for any $M>0$
  \begin{equation*}
    K_{-1}^{M}x=x
  \end{equation*}
  which contradicts the assumption of admissibility. Let $M_{0}$ be the smallest integer such that $F_{M_{0}}x\neq 0$. Then by induction, we have
  \begin{align*}
    Q_{M_{0}+1}x=K_{1}Q_{M_{0}}K_{-1}x=\cdots =K_{1}^{M_{0}}Q_{1}K_{-1}^{M_{0}}x=K_{1}^{M_{0}+1}F_{M_{0}}x.
  \end{align*}
  which is non-zero as $K_{1}$ is injective.
\end{proof}

On the other hand, it is more complicated to construct a $\mc{E}$-representation from a graded $\mc{C}$-module, which we will leave to \cref{prop125}.

\subsection{Filtration and Fermionization}
\label{sec12}
In this subsection we give the functor from admissible $\mc{H}$-representations to graded admissible $\mc{E}$-representations: let $W_{\infty}$ be an admissible $\mc{H}$-representation, we consider
\begin{equation*}
  W_{i}:=\bigcap_{l> i}ker(H_{l}).
\end{equation*}
It satisfies that $W_{i}\subset W_{j}$ for $i\leq j$ and
\begin{equation*}
  W_{\infty}=\bigcup_{i\geq 0}W_{i}.
\end{equation*}
Hence it forms a filtration of $W$. For any $k>0$, we consider operators
\begin{equation*}
  \mc{L}_{k}:=\sum_{i=1}^{\infty}\frac{-1}{k^{i}i!}H_{k}^{i-1}H_{-k}^{i}
\end{equation*}
The admissible property of $W_{\infty}$ makes sure that the action of $\mc{L}_{k}$ on $W_{\infty}$ is well defined. By the commutative property of the Heisenberg algebra, we have
\begin{equation}
  \label{eq:comm}
  H_{k}W_{k}\subset W_{k}, \quad \mc{L}_{k}W_{k}\subset W_{k}.
\end{equation}
In \cref{sec:A2}, we will prove that
\begin{proposition}
  \label{224}
  We have
  \begin{equation}
    \label{eq25}
    (Id-H_{k}\mc{L}_{k})W_{k}\subset W_{k-1}.
  \end{equation}
  Let $E:W_{k}\to W_{k+1}$ be the embedding operator, and $F:W_{k+1}\to W_{k}$ be the operator induced by $Id-H_{k}\mc{L}_{k}$.
  Moreover, we consider the operators
  \begin{equation*}
  K_{1}:=\bigoplus_{k\geq 0}H_{k}, \quad K_{-1}:=\bigoplus_{k\geq 0}\mc{L}_{k},
\end{equation*}
where $H_{0}=\mc{L}_{0}:=id$.  Then $W$ is an graded admissible $\mc{E}$-module under the action of $E,F,K_{1},K_{-1}$. 
\end{proposition}

\subsection{Stable limit and Bosonization}
\label{sec:1.3}
The functor from graded admissible $\mc{E}$-modules to admissible $\mc{H}$-modules is constructed by taking the stable limit:
\begin{definition}
  Given a graded admissible $\mc{E}$-module
  \begin{equation*}
    W:=\bigoplus_{i\geq 0}W_{i}
  \end{equation*}
  we consider the stable limit, 
  \begin{equation*}
    W_{\infty}:=\lim_{n\to \infty}W_{i}
  \end{equation*}
  where the morphism form $W_{i}$ to $W_{i+1}$ is $K_{1}$.
\end{definition}

By induction, there exists a unique series of operators  $\{H_{i,j}\}|_{-i\leq j\leq i, j\neq 0}$ which acts on $W_{i}$ such that
  \begin{equation*}
    H_{i,i}:=K_{1}|_{W_{i}},\quad H_{-i,i}:=-i \sum_{l=0}^{\infty}(K_{1})^{l}K_{-1}^{l+1}|_{W_{i}}.
  \end{equation*}
  and if $|j|\neq i$
  \begin{equation*}
    H_{i,j}:=\sum_{l=0}^{\infty}E_{l}H_{i-1,j}F_{l}.
  \end{equation*}
  Moreover, as
    \begin{align*}
    H_{i+1,j}E=\sum_{l=0}^{\infty}E_{l}H_{i,j}F_{l}E=\sum_{l=0}^{\infty}E_{l}H_{i,j}\delta_{0,l}=E_{l}H_{i,j},
    \end{align*}
    for any $l\neq 0$, the operators $\{H_{i,l}\}_{i\geq 0}$ converges to an operator $H_{l}$ on $W_{\infty}$. In the \cref{sec:A3}, we will prove that
    \begin{proposition}
        \label{prop:228}

      The operators $H_{l}|_{l\in \mb{Z}-\{0\}}$ forms an admissible $\mc{H}$-representation on $W_{\infty}$.
    \end{proposition}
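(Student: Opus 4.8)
The claim has two parts: that the operators $H_l$, $l\in\mb{Z}\setminus\{0\}$, satisfy the Heisenberg relations $[H_a,H_b]=a\delta_{a,-b}$ on $W_\infty$, and that the resulting $\mc{H}$-module is admissible in the sense of \cref{def:ad3}. The strategy in both cases is to descend to the finite stages: since $H_l$ is, by construction, the colimit of the operators $H_{i,l}$ (stage $i$, mode $l$) along the injective maps $E$, compatibly with $H_{i+1,l}E=EH_{i,l}$, any relation among the $H_l$ on $W_\infty$ follows from the corresponding relation among the $H_{i,l}$ on each $W_i$ with $i$ large. I will use only the defining relations of $\mc{E}$ together with \cref{lem22,lem17}.

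\emph{The Heisenberg relations.} I would fix $a,b$ and prove $[H_{i,a},H_{i,b}]=a\delta_{a,-b}\cdot\mathrm{id}_{W_i}$ for every $i\geq i_0:=\max(|a|,|b|)$ by induction on $i$. For the inductive step $i>i_0$, both operators are produced by the recursion $H_{i,\bullet}=\sum_l E_lH_{i-1,\bullet}F_l$; substituting this into the product, using $F_lE_{l'}=\delta_{l,l'}$ to collapse the double sum and $\sum_lE_lF_l=1-K_1^{l+1}K_{-1}^{l+1}$ with the second term killing $W_i$ by admissibility ($i>0$), one obtains $[H_{i,a},H_{i,b}]=\sum_lE_l[H_{i-1,a},H_{i-1,b}]F_l$, which carries the scalar value from stage $i-1$ to stage $i$. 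At the base stage $i_0$ there are three cases. The case $a=b$ is trivial. If $a=-b$, say $a=i_0>0$, then $H_{i_0,i_0}=K_1|_{W_{i_0}}$ and $H_{i_0,-i_0}=-i_0\sum_mK_1^mK_{-1}^{m+1}|_{W_{i_0}}$; rewriting $K_1^mK_{-1}^{m+1}K_1=K_1^mK_{-1}^m$ by means of $K_{-1}K_1=1$, the commutator becomes the telescoping sum $-i_0\sum_m(K_1^{m+1}K_{-1}^{m+1}-K_1^mK_{-1}^m)$, which equals $i_0\cdot\mathrm{id}=a\delta_{a,-b}\cdot\mathrm{id}$ on $W_{i_0}$, since $K_1^MK_{-1}^M$ kills any fixed vector of $W_{i_0}$ for $M$ large (admissibility again). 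If $|a|<|b|=i_0$, then $H_{i_0,a}$ is built by the recursion, and a one-step computation with the mixing identities of \cref{lem22} ($K_1E_l=E_{l+1}$, $F_lK_1=F_{l-1}$, $F_0K_1=0$, $K_{-1}E_l=E_{l-1}$, $K_{-1}E_0=0$, $F_lK_{-1}=F_{l+1}$) shows $H_{i_0,a}$ commutes with both $K_1|_{W_{i_0}}$ and $K_{-1}|_{W_{i_0}}$, hence with $H_{i_0,b}$, so the commutator is $0=a\delta_{a,-b}\cdot\mathrm{id}$.

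\emph{Admissibility.} First, if $x=\iota_{i_0}(\xi)$ comes from $W_{i_0}$ and $l>i_0$, then representing $x$ at stage $l$ as $\iota_l(E^{l-i_0}\xi)$ and applying $H_{l,-l}=-l\sum_mK_1^mK_{-1}^{m+1}|_{W_l}$, every summand contains the factor $K_{-1}E=0$ (as $l-i_0\geq1$), so $H_{-l}x=0$; in particular $H_{-l}$ kills $\iota_0(W_0)$ for all $l>0$. Next, iterating \cref{lem17} and using \cref{lem22} and admissibility (which forces every $\xi\in W_{i+1}$ into $\sum_{l<N}E_lW_i$ for some $N$, so that the sum below is honest rather than completed) gives the direct-sum decomposition $W_{i+1}=\bigoplus_{l\geq0}E_lW_i$; transporting it along the injective maps $\iota$, and using that $H_{i+1}$ acts on $\iota_{i+1}(W_{i+1})$ as $\iota_{i+1}\circ K_1$ while $K_1^lE=E_l$, this becomes $\iota_{i+1}(W_{i+1})=\bigoplus_{l\geq0}H_{i+1}^l\,\iota_i(W_i)$. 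Iterating over $i$ and using that the $H_l$ with $l>0$ pairwise commute (by the relations just proved), one exhibits $W_\infty$ as the free $\mb{Q}[H_1,H_2,\dots]$-module on $\iota_0(W_0)$. Grading $W_\infty$ by $\deg H_l=l$ and $\deg\iota_0(W_0)=0$, the scalar relation $[H_{-k},H_l]=-k\delta_{k,l}$ together with $H_{-k}\iota_0(W_0)=0$ makes each $H_{-k}$ homogeneous of degree $-k$; hence for $x$ of top degree $e$ one has $H_{-\lambda}x=0$ for every $|\lambda|>e$, which is precisely the condition of \cref{def:ad3}.

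\emph{The main obstacle.} The Heisenberg relations fall out of the finite-stage recursion without much trouble; the real work is the admissibility argument, and in particular the claim that $W_\infty$ is a \emph{free}, rather than a completed (``power series'' type), $\mb{Q}[H_1,H_2,\dots]$-module. The fact that $W_{i+1}=\bigoplus_lE_lW_i$ is a genuine direct sum rather than a completion is exactly what the admissibility hypothesis on the $\mc{E}$-module — local nilpotence of $K_{-1}$ on $\bigoplus_{i>0}W_i$, \cref{def:ad1} — is there to guarantee; without it the $H_{-\lambda}$ would act by an infinite process and \cref{def:ad3} could fail.
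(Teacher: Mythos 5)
Your proof of the Heisenberg relations coincides with the paper's: your telescoping base case for $[H_{i_0,i_0},H_{i_0,-i_0}]=i_0$ is \cref{lem:118}, your observation that $H_{i_0,a}$ commutes with $K_1|_{W_{i_0}}$ and with $H_{i_0,-i_0}$ for $|a|<i_0$ is \cref{lem:120}, and your inductive collapse of the double sum via $F_lE_m=\delta_{l,m}$ and $\sum_lE_lF_l=\mathrm{id}$ is exactly \cref{prop:120}. Where you genuinely diverge is that the paper's proof of \cref{prop:228} consists only of the phrase ``it is the limit of \cref{prop:120}'' and never addresses admissibility of the resulting $\mc{H}$-module; your argument — $H_{-l}$ kills the image of $W_{i_0}$ for $l>i_0$ because each summand of $H_{l,-l}$ contains $K_{-1}E=0$, the decomposition $W_{i+1}=\bigoplus_{l\geq 0}E_lW_i$ forced by \cref{lem17} and admissibility of the $\mc{E}$-module, and the resulting presentation of $W_\infty$ as a free $\mb{Q}[H_1,H_2,\dots]$-module on $\iota_0(W_0)$ — supplies the missing half of the statement, and correctly isolates local nilpotence of $K_{-1}$ as the hypothesis that makes the sum direct rather than completed. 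One further point in your favor: the paper defines the stable limit with transition map ``$K_1$'' even though $K_1$ has degree $0$, while the compatibility it actually verifies is $H_{i+1,j}E=EH_{i,j}$; your reading of the transition map as $E$ is the one under which the limit operators are well defined.
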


    \begin{proof}[Proof of \cref{cor:229}]
      It follows from \cref{prop:213,prop125,224,prop:228}.
    \end{proof}

\subsection{The Boson-Fermion correspondence}
The Fermion Fock space $F$ is a graded admissible $\mc{E}$-module and the Bosonic Fock space $B$ is an admissible $\mb{H}$-module. We have that
\begin{proposition}
  \label{prop:220}
  The stable limit of the Fermionic Fock space $F$ is the Bosonic Fock space $B$.
\end{proposition}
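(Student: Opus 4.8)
The plan is to identify the stable limit $F_{\infty}$ abstractly, using the equivalence of \cref{cor:229}, rather than to compute it directly. Equip $F$ with the grading of \cref{lem:rho}, under which the $i$-th graded piece is $\wedge^{i}V$; by \cref{prop:110} this graded admissible $\mc{E}$-module coincides with the Verma module $F_{\mc{E}}$ and has no proper nonzero $\mc{E}$-submodules, so in particular it is a simple object of the category of graded admissible $\mc{E}$-modules. The stable-limit functor underlying the equivalence of \cref{cor:229} carries this category to the category of admissible $\mc{H}$-modules, and equivalences preserve simple objects; since the admissibility condition of \cref{def:ad3} is inherited by $\mc{H}$-submodules, a simple object there is just an $\mc{H}$-module with no proper nonzero submodules at all. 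Hence $F_{\infty}$, equipped with the operators $H_{\ell}$ of \cref{prop:228}, is an irreducible $\mc{H}$-module, and it suffices to prove that $B$ is, up to isomorphism, the unique irreducible admissible $\mc{H}$-module.

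First, $B$ itself is irreducible: given a nonzero $p\in B=\mb{Q}[y_{1},y_{2},\dots]$, applying the operators $H_{-i}$ ($i>0$), which act up to nonzero scalars as $\partial/\partial y_{i}$, to a monomial of $p$ yields a nonzero constant, and then the $H_{i}$ ($i>0$), acting as multiplication by $y_{i}$, generate all of $B$ from it; admissibility of $B$ is the Example above. Conversely, let $W$ be any irreducible admissible $\mc{H}$-module. Its vacuum $W^{0}:=\bigcap_{i>0}\ker(H_{-i})$ is nonzero: for $0\neq w\in W$, admissibility forces $H_{-\lambda}w=0$ once $|\lambda|$ is large, so the subspace spanned by all $H_{-\lambda}w$ is finite dimensional and preserved by the commuting, locally nilpotent operators $\{H_{-i}\}_{i>0}$, which therefore have a common kernel vector $v$ lying in $W^{0}$. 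By irreducibility $W=\mc{H}\cdot v$, so $W$ is a graded quotient of the Heisenberg Verma module generated by a single vacuum vector; comparing graded characters (with $\deg H_{\ell}=\ell$), that Verma module is isomorphic to $B$, and since $B$ is irreducible the quotient map is an isomorphism. Applying this with $W=F_{\infty}$ gives $F_{\infty}\cong B$.

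The only point requiring care beyond formal bookkeeping is the nonvanishing of the vacuum $W^{0}$ for an arbitrary irreducible admissible $W$, which is exactly the finite-dimensionality-plus-commuting-nilpotents argument above; everything else is either categorical or classical. If one prefers an explicit isomorphism instead, one computes $F_{\infty}=\varinjlim(\wedge^{i}V,E)$ by hand: its natural basis is indexed by the finite subsets $\{i_{1}<\cdots<i_{m}\}$ of $\{2,3,\dots\}$, equivalently by partitions $\lambda$ via $\lambda_{m+1-\ell}=i_{\ell}-\ell$, and one then matches the operators $H_{\ell}$ of \cref{prop:228} against multiplication by $y_{\ell}$ and a scalar multiple of $\partial/\partial y_{\ell}$ on $\mb{Q}[y_{1},y_{2},\dots]$; the abstract argument makes this computation unnecessary. (Once the full Categorical Boson--Fermion correspondence, including the equivalence with $\mb{Q}$-vector spaces, is in place, the statement is also immediate from it, but the argument here uses only \cref{cor:229}.)
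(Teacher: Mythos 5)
Your argument is correct, but it proves the statement by an entirely different route than the paper. The paper's own proof is explicit and self-contained: it writes down the map $\rho(x_{i_{1}}\wedge\cdots\wedge x_{i_{n}})=y_{1}^{i_{n}-i_{n-1}-1}\cdots y_{n}^{i_{1}-1}$ (recording successive gaps of the indices), checks that $\rho$ is unchanged by the transition maps of the direct system, and observes that $\rho$ identifies the $i$-th graded piece of $F$ with $\mb{Q}[y_{1},\dots,y_{i}]$, so the limit is $B$; this is exactly the ``by hand'' computation you relegate to your final paragraph. Your route instead transports simplicity of $F$ (from \cref{prop:110}) through the equivalence of \cref{cor:229} and then classifies irreducible admissible $\mc{H}$-modules; the vacuum-nonvanishing step (a finite-dimensional subspace spanned by the $H_{-\lambda}w$, invariant under the commuting locally nilpotent $H_{-i}$) is sound, as is the identification of the resulting highest-weight quotient with the Fock/Verma module. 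What your approach buys is that the conclusion is manifestly an isomorphism of $\mc{H}$-modules, something the paper's proof only establishes at the level of graded vector spaces compatible with the transition maps; what it costs is reliance on \cref{cor:229} being a genuine equivalence with the stable limit of \cref{prop:228} as one of its functors --- the paper asserts this but its proof merely assembles functors in each direction without verifying they are quasi-inverse, whereas the paper's proof of this proposition is independent of that assertion. So your argument is valid granted the paper's own statements, but it is less elementary and omits the explicit partition-indexed basis matching that the paper's construction provides (and which is implicitly used later when degrees of monomials in $B$ are computed in \cref{thm63}).
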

\begin{proof}
  We construct the mapping from $\rho:F\to B$
\begin{equation*}
  \rho(x_{i_{1}}\wedge x_{i_{2}}\cdots \wedge x_{i_{n}}):=y_{1}^{i_{n}-i_{n-1}-1}y_{2}^{i_{n-1}-i_{n-2}-1}\cdots y_{n}^{i_{1}-1}.
\end{equation*}
It is easy to see the following properties of $\rho$
\begin{enumerate}
\item $\rho$ is invariant under $K_{1}$, i.e. for any $x\in F$,
  \begin{equation*}
    \rho(K_{1}x)=\rho(x).
  \end{equation*}
\item For any $i\geq 0$, $\rho$ induces an isomorphism of vector spaces between $F_{i}$ and
  \begin{equation*}
    B_{i}\cong \mb{Q}[y_{1},\cdots, y_{i}].
  \end{equation*}
\end{enumerate}
hence by taking the limit $i\to \infty$, we prove \cref{prop:220}.
\end{proof}

\begin{theorem}[Categorical Boson-Fermion Correspondence]
  \label{fermion-boson}
  By tensoring the Fermionic or Bosonic Fock space, it induces an equivalence from the abelian category of vector spaces to the abelian category of graded admissible $\mc{C}$-modules or admissible $\mc{H}$-modules.
\end{theorem}

By \cref{cor:229}, \cref{fermion-boson} follows from the following proposition:
\begin{proposition}
  \label{prop:225}
  For any graded admissible $\mc{C}$-module $W$, 
  \begin{equation*}
    W\cong F\otimes_{\mb{Q}}W_{0}.
  \end{equation*}
\end{proposition}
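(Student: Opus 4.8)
The plan is to combine the structural results of \cref{cor:229} with the concrete computation in \cref{prop:220}. Start from an admissible $\mc{H}$-module $W_{\infty}$. By the filtration construction of \cref{sec12} (i.e. \cref{224}), set $W_{i}:=\bigcap_{l>i}\ker(H_{l})$; this produces a graded admissible $\mc{E}$-module $W=\bigoplus_{i\geq 0}W_{i}$ whose stable limit, by \cref{prop:228} together with the fact that the two functors of \cref{cor:229} are mutually inverse, recovers $W_{\infty}$ as an $\mc{H}$-module. So it suffices to prove the isomorphism at the level of the graded $\mc{E}$-module $W$, namely that any graded admissible $\mc{E}$-module is isomorphic (as an $\mc{E}$-module) to $F\otimes_{\mb Q}W_{0}$, and then pass to stable limits using \cref{prop:220} which already identifies $F_{\infty}\cong B$.

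The key step is therefore: given a graded admissible $\mc{E}$-module $W$, construct an $\mc{E}$-linear isomorphism $W\cong F\otimes_{\mb Q}W_{0}$. First I would note that $W_{0}$ carries only the trivial structure ($K_{1}|_{W_{0}}=K_{-1}|_{W_{0}}=\mathrm{id}$, and $E,F$ have nonzero degree so they move out of $W_{0}$), so $F\otimes_{\mb Q}W_{0}$ is just the functor $\mf f$ applied to the vector space $W_{0}$, with $\mc{E}$ acting through the first tensor factor via \cref{lem:rho}. Using \cref{lem22}, the operators $E_{i}=K_{1}^{i}E$ generate: for $0\le i_{1}\le i_{2}\le\cdots\le i_{n}$ the elements $E_{i_{1}}E_{i_{2}-i_{1}}\cdots E_{i_{n}-i_{n-1}}$ applied to $W_{0}$ should span $W$, in direct analogy with the basis of $F_{\mc{E}}$ exhibited in the proof of \cref{prop:110}. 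Concretely, I would define the map $F\otimes_{\mb Q}W_{0}\to W$ by sending $(x_{i_{1}}\wedge\cdots\wedge x_{i_{n}})\otimes w$ to $E_{i_{1}-1}E_{i_{2}-i_{1}-1}\cdots E_{i_{n}-i_{n-1}-1}w$ (matching the image formula in \cref{prop:110}), check it is $\mc{E}$-linear using the relations \cref{eq:ind} and the commutation rules $K_{1}E_{i}=E_{i+1}$, $K_{-1}E_{i+1}=E_{i}$, $F_{i}E_{j}=\delta_{ij}$, and check bijectivity degree by degree. Surjectivity in degree $i$ follows from \cref{lem17} ($W\cong K_{1}W\oplus EW$) by induction on $i$: $W_{i}=K_{1}W_{i-1}\oplus E W_{i-1}$, which inductively is spanned by the claimed elements; injectivity follows because the operators $F_{M}$ detect the leading $E$-letters, exactly as in the irreducibility argument for $F$, combined with admissibility guaranteeing that any $x\in W_{i}$ is killed by a high power of $K_{-1}$, so only finitely many $E$-strings contribute.

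The main obstacle I expect is verifying that the spanning elements are actually \emph{linearly independent} in the general (non-Fock) setting — i.e. that $W$ is a \emph{free} module over the "shift algebra" generated by the $E_{i}$'s with $W_{0}$ as generators, with no unexpected relations. In the concrete Fock space $F$ this is the statement that distinct wedge monomials are independent, but for an abstract admissible $W$ one must deduce it purely from the defining relations of $\mc{E}$ and the admissibility/grading hypotheses. The cleanest route is probably to run the argument inductively on the grading using the canonical splitting $W_{i}\cong K_{1}W_{i-1}\oplus EW_{i-1}$ of \cref{lem17}: the $K_{1}$-summand is isomorphic to $W_{i-1}$ via the injective $K_{1}$ (and $K_{1}$ corresponds to $\rho$ being $K_{1}$-invariant, i.e. to not introducing a new $y$-variable), while the $E$-summand is isomorphic to $W_{i-1}$ via $F$ restricted to $EW_{i-1}=\ker(K_{-1})$, and this recursion is precisely the one realizing $F_{i}\cong B_{i}$ in \cref{prop:220}. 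Tracking that these two recursions agree — and that the base case $W_{0}$ matches — then upgrades the vector-space isomorphism $F\cong B$ to the twisted statement $W\cong F\otimes W_{0}$, and taking the stable limit gives $W_{\infty}\cong B\otimes_{\mb Q}W_{0}$, where $W_{0}=\bigcap_{i>0}\ker(H_{i})$ is intrinsically recovered from the $\mc{H}$-module structure.
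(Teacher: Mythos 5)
Your strategy is genuinely different from the paper's. The paper proves the statement entirely on the $\mc{H}$-side: it writes down the map $y_{1}^{l_{1}}\cdots y_{m}^{l_{m}}\otimes w_{0}\mapsto H_{1}^{l_{1}}\cdots H_{m}^{l_{m}}w_{0}$ and shows by induction along the filtration that each step $W_{i}\otimes_{\mb{Q}}\mb{Q}[y_{i+1}]\to W_{i+1}$ is bijective --- injectivity by applying $H_{-i-1}^{n}$ to isolate the top coefficient, surjectivity via the telescoping identity $w=\sum_{k}H_{i+1}^{k}\bigl((1-H_{i+1}\mc{L}_{i+1})\mc{L}_{i+1}^{k}w\bigr)$ together with \cref{eq25}. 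You instead transport the problem to the $\mc{E}$-side, prove the freeness statement $W\cong F\otimes_{\mb{Q}}W_{0}$ there by the $E$-string basis modelled on \cref{prop:110}, and return by the stable limit. This is a legitimate and arguably more structural route, and your key decomposition is the correct analogue of the paper's telescoping identity. But be aware of what it costs: the paper proves this proposition precisely as the input needed for \cref{fermion-boson}, whereas your argument outsources the return trip to the assertion that the functors of \cref{cor:229} are mutually inverse and to an $\mc{H}$-equivariant upgrade of \cref{prop:220} (whose proof in the paper only exhibits vector-space isomorphisms $F_{i}\cong B_{i}$). Once those compatibilities are written out, your proof is not shorter; the paper's direct computation is doing exactly that work in one pass.

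Two concrete points need repair. First, the splitting you invoke, $W_{i}=K_{1}W_{i-1}\oplus EW_{i-1}$, is mis-indexed: by \cref{def:ad1} the operator $K_{1}$ has degree $0$, so \cref{lem17} in degree $i$ reads $W_{i}=K_{1}W_{i}\oplus EW_{i-1}$, which by itself is not an induction on the grading. You must iterate it and use admissibility (for each $x$ there is $N$ with $K_{-1}^{N}x=0$, hence $x=\sum_{l=0}^{N-1}E_{l}F_{l}x$ by \cref{eq:ind}) to obtain the locally finite decomposition $W_{i}=\bigoplus_{l\geq 0}E_{l}W_{i-1}$; this single identity then gives both the spanning and the linear independence of your $E$-strings (the $F_{l}$ are the dual projections, $F_{l}E_{j}=\delta_{lj}$ by \cref{lem22}). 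You gesture at this in the injectivity discussion, but the induction should be organized around this decomposition rather than the mis-stated splitting. Second, after identifying $W\cong F\otimes_{\mb{Q}}W_{0}$ you still need that the $\mc{H}$-action produced on the stable limit by \cref{prop:228} agrees with the original action on $W_{\infty}$ and, on the model $F_{\infty}\otimes W_{0}$, with the standard Bosonic action on $B\otimes_{\mb{Q}}W_{0}$; neither is automatic from \cref{cor:229} as the paper states it, and the second is exactly the point where the paper's explicit formula $y_{\lambda}\otimes w_{0}\mapsto H_{\lambda}w_{0}$ earns its keep.
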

\begin{proof}

  For any $k>0$, we consider
  \begin{equation*}
    \bar{W}^{k}:=\bigcap_{l>k}ker(Q_{l})
  \end{equation*}
  which is a $\mb{Z}_{\geq 0}$-grade vector space and 
\begin{equation*}
  W=\lim_{k\to \infty}\bar{W}^{k}.
\end{equation*}

We notice that 
\begin{equation*}
  F\cong \lim_{k\to \infty}\wedge_{i=1}^{k}(\mb{Q}\oplus \mb{Q}x_{i}).
\end{equation*}
Hence to prove \cref{prop:225}, we only need to prove that for any $k\geq 0$,
\begin{equation*}
  \bar{W}^{k}\cong W_0\otimes_\mb{Q}\bigotimes_{i=1}^{k}\wedge^i(\mb{Q}\oplus \mb{Q}x_i).
\end{equation*}
Moreover, by induction, we only need to prove that 
\begin{equation*}
  \bar{W}^{k+1}\cong \bar{W}^{k}\otimes_{\mb{Q}}(\mb{Q}\oplus \mb{Q}x_{k+1}).
\end{equation*}
as $\bar{W}^0\cong W_0$. It follows from the action of the Clifford algebra generated by $P_{k+1}$ and $Q_{k+1}$.
\end{proof}

\section{Derived categories of derived Grassmannians}
Given a variety $X$, a locally free sheaf $\mc{V}$ and an integer $r\geq 0$, the Grassmannian (on Grothendieck's notation) $Gr_{X}(\mc{V}, r)$ is defined as the moduli space of quotients
\begin{equation*}
  \mc{V}\twoheadrightarrow \mc{E}
\end{equation*}
where $\mc{E}$ is a locally free sheaf of rank $r$.

Jiang  \cite{jiang2022derived,jiang2023derived,jiang2022grassmanian} extended the definition of Grassmannians and flag varieties such that $\mc{V}$ can be any perfect complex. Particularly,  when $\mc{V}$ is a tor-amplitude $[0,1]$ perfect complex, i.e. locally a two-term complexes of locally free sheaves
\begin{equation*}
  \mc{V}\cong \{W\to V\}
\end{equation*}
the derived Grassmannian is quasi-smooth (i.e. the locally complete intersection scheme in the derived algebraic geometry setting). Moreover, Jiang \cite{jiang2023derived} strengthens the result of Toda \cite{MR4608555} for the semi-orthogonal decomposition of derived Grassmannians by given Fourier-Mukai kernels. In this section, we review several main results of Jiang \cite{jiang2022derived,jiang2023derived,jiang2022grassmanian}. 

In this section, we will always assume that $X$ is a derived scheme and $\mc{E}$ is a Tor-amplitude $[0,1]$ perfect complex on $X$ of rank $e$.

\subsection{Derived Grassmannians of a Tor-amplitude $[0,1]$ complex}
\label{sec:2.1}

\begin{definition}[Definition 4.3 of \cite{jiang2022grassmanian}]
  Given an integer $d\geq 0$, we define the derived Grassmannian
  \begin{equation*}
    Gr_{X}(\mc{E},d)
  \end{equation*}
  as the derived stack which parametrized the moduli space of quotients
  \begin{equation*}
    \{\mc{E}\twoheadrightarrow \mc{V}\}
  \end{equation*}
  where $\mc{V}$ is a rank $d$ locally free sheaf. We define
  \begin{equation*}
    pr_{\mc{E},d}:Gr_{X}(\mc{E},d)\to X
  \end{equation*}
  as the projection morphism.
\end{definition}


The concept of the quasi-smooth morphisms (schemes) is a generalization of the locally complete intersection morphisms (schemes) in derived algebraic geometry:

\begin{definition}
  A morphism of derived schemes $f:X\to Y$ is defined to be quasi-smooth if the cotangent complex $L_{f}:=L_{X/Y}$ is of tor-amplitude $[0,1]$. A derived scheme $X$ is defined to be quasi-smooth if the morphism from $X$ to $\mathrm{Spec}(\mb{C})$ is quasi-smooth. For a quasi-smooth morphism $f$ (resp. scheme $X$), we define its virtual dimension, denoted as
  \begin{equation*}
    vdim(f), \quad (\text{resp. } vdim(X))
  \end{equation*}
  as the rank of the cotangent complex $L_{f}$ (resp. $L_{X}$). The relative canonical bundle $K_{f}$ (resp. $K_{X}$) is defined as the determinant of $L_{f}$ (resp. $L_{X}$). (We refer to \cite{SchurgToenVezzosi+2015+1+40} or \cite{jiang2022derived} for the determinant of a tor-amplitude $[0,1]$ complex). 
\end{definition}

\begin{remark}
  Given a quasi-smooth derived scheme $X$ over $\mb{C}$, we say that $X$ is classical if it has a trivial derived structure, i.e. $\pi_{0}(X)\cong X$ where $\pi_{0}(X)$ is the underlying scheme. By Corollary 2.11 of \cite{Arinkin2015} (which we also discussed in the Appendix A of \cite{yuzhaoderived}), $X$ is classical if and only if
\begin{equation*}
  vdim(X)=dim(\pi_{0}(X)).
\end{equation*}
and $X$ is always a locally complete intersection variety if $X$ is classical.
\end{remark}

\begin{theorem}[\cite{jiang2022grassmanian}]
  \label{thm:grass}
  The derived Grassmannian $Gr_{X}(\mc{E},d)$ is a proper derived scheme over $X$ and the projection map
  \begin{equation*}
    pr_{\mc{E},d}:Gr_{X}(\mc{E},d)\to X
  \end{equation*}
  is quasi-smooth with relative virtual dimension is $d(rank(\mc{E})-d)$.
\end{theorem}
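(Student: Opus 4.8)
The plan is to reduce to the classical case by choosing a local two-term presentation of $\mc{E}$ and realising the derived Grassmannian as a derived vanishing locus inside an ordinary Grassmannian bundle. The base case is when $\mc{E}$ is a \emph{locally free} sheaf of rank $e$: then $Gr_X(\mc{E},d)$ is the usual Grassmannian bundle, which is smooth and proper over $X$, and its relative tangent bundle is $\mathcal{H}om(\mc{S},\mc{Q})$, where $0\to\mc{S}\to\mc{E}\to\mc{Q}\to 0$ is the tautological sequence, so the relative dimension is $(e-d)d=d(\mathrm{rank}(\mc{E})-d)$; in particular the projection is quasi-smooth (indeed smooth) of the asserted virtual dimension.

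Next, for a general $\mc{E}$ of tor-amplitude $[0,1]$, I would work Zariski-locally on $X$ and pick a presentation $\mc{E}\simeq[\,W\xrightarrow{\ \sigma\ }V\,]$ with $V$ the degree-$0$ term and $W$ in the adjacent degree, both locally free of ranks $v,w$, so that $\mathrm{rank}(\mc{E})=v-w$. Let $\mathbb{G}:=Gr_X(V,d)$ be the (smooth, proper) Grassmannian bundle of $V$, with tautological quotient $\rho:V_{\mathbb{G}}\twoheadrightarrow\mc{Q}$, where $\mc{Q}$ is locally free of rank $d$. The composite $\rho\circ\sigma_{\mathbb{G}}:W_{\mathbb{G}}\to\mc{Q}$ is a section $s$ of the locally free sheaf $\mc{B}:=\mathcal{H}om(W_{\mathbb{G}},\mc{Q})$, of rank $wd$. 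The claim is that there is an equivalence $Gr_X(\mc{E},d)\simeq Z(s)$ with the derived zero locus of $s$ in $\mathbb{G}$: unwinding Jiang's moduli functor, a $T$-point of $Gr_X(\mc{E},d)$ over $T\to X$ is a surjection $\mc{E}_T\twoheadrightarrow\mc{Q}_T$ with $\mc{Q}_T$ locally free of rank $d$, and this is the same datum as a surjection $V_T\twoheadrightarrow\mc{Q}_T$ (i.e. a $T$-point of $\mathbb{G}$) together with a nullhomotopy of the induced map $W_T\to\mc{Q}_T$ --- exactly the data of a $T$-point of $Z(s)$. One checks this comparison at the level of derived moduli functors, and that it is independent of the chosen presentation so that the local identifications glue.

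Granting the identification, the conclusion follows from standard properties of derived vanishing loci. The inclusion $Z(s)\hookrightarrow\mathbb{G}$ is a quasi-smooth closed immersion with $L_{Z(s)/\mathbb{G}}\simeq\mc{B}^{\vee}|_{Z(s)}[1]$; composing with the smooth projection $\mathbb{G}\to X$ and using the cofibre sequence $pr^{*}L_{\mathbb{G}/X}\to L_{Z(s)/X}\to L_{Z(s)/\mathbb{G}}$ shows that $L_{Gr_X(\mc{E},d)/X}$ is, locally, the two-term complex $[\,\mc{B}^{\vee}\to L_{\mathbb{G}/X}\,]$, of tor-amplitude $[0,1]$; hence the projection is quasi-smooth, with
\[
  vdim\big(Gr_X(\mc{E},d)/X\big)=\mathrm{rank}(L_{\mathbb{G}/X})-\mathrm{rank}(\mc{B})=(v-d)d-wd=(v-w-d)d=(\mathrm{rank}(\mc{E})-d)d.
\]
Properness of the projection holds since $Z(s)\hookrightarrow\mathbb{G}$ is a closed immersion and $\mathbb{G}\to X$ is proper, and properness is local on the target; likewise quasi-smoothness and the virtual dimension are local on $X$, so the local statements assemble into the global one.

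The main obstacle I expect is the comparison step: verifying rigorously that Jiang's derived moduli functor for $Gr_X(\mc{E},d)$ agrees, Zariski-locally on $X$, with the derived zero locus $Z(s)$ inside $Gr_X(V,d)$ --- i.e. that the reformulation ``quotient of $V$ plus a nullhomotopy of $W\to\mc{Q}$'' is an equivalence of mapping spaces and not merely a bijection on $\pi_{0}$ --- together with checking that the derived structure so obtained is independent of the presentation $[W\to V]$. A more intrinsic alternative avoids the local presentation entirely: compute the relative cotangent complex of Jiang's functor by deformation theory, obtaining $\mb{T}_{Gr_X(\mc{E},d)/X}|_{T}\simeq R\mathcal{H}om_T(\mc{K},\mc{Q})$ at the $T$-point classifying $\mc{E}_T\twoheadrightarrow\mc{Q}$ with $\mc{K}:=\mathrm{fib}(\mc{E}_T\to\mc{Q})$; since $\mc{E}$ has tor-amplitude $[0,1]$ and $\mc{Q}$ is locally free, $\mc{K}$ has tor-amplitude $[0,1]$, so $R\mathcal{H}om_T(\mc{K},\mc{Q})$ is perfect of tor-amplitude $[0,1]$ with fibrewise Euler characteristic $\mathrm{rank}(\mc{K})\cdot d=(\mathrm{rank}(\mc{E})-d)d$, giving quasi-smoothness and the virtual dimension directly; properness then still requires the local embedding into a Grassmannian bundle.
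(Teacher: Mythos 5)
This theorem is quoted from Jiang's paper \cite{jiang2022grassmanian} and the paper under review gives no proof of it, so there is no internal argument to compare against; your proposal is, in substance, a correct reconstruction of Jiang's own argument. The local strategy --- present $\mc{E}\simeq[W\xrightarrow{\sigma}V]$, pass to the classical Grassmannian bundle $Gr_X(V,d)$, and cut out $Gr_X(\mc{E},d)$ as the derived zero locus of the section of $\mathcal{H}om(W,\mc{Q})$ induced by $\rho\circ\sigma$ --- is exactly how these derived Grassmannians are constructed and analyzed, and your dimension count $(v-d)d-wd=d(\operatorname{rank}(\mc{E})-d)$ is right. The functor comparison you flag (a map out of $\mathrm{cofib}(W\to V)$ to a locally free $\mc{Q}$ is a map $V\to\mc{Q}$ plus a nullhomotopy of the composite $W\to\mc{Q}$, surjectivity being detected on $\pi_0$) is the genuine content of the identification, and your intrinsic fallback via the cotangent complex is the cleanest way to settle both quasi-smoothness and presentation-independence: note that your formula $\mb{T}\simeq R\mathcal{H}om(\mathrm{fib}(\mc{E}\to\mc{Q}),\mc{Q})$ is precisely the dual of the formula $L_{Gr_X(\mc{E},d)/X}\cong\mc{V}_d^{\vee}\otimes\mathrm{fib}(v_d)$ that the paper quotes from Theorem 4.18 of \cite{jiang2022grassmanian}, so the two routes are consistent. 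The only points I would tighten are (i) the check that $\mathrm{fib}(v_d)$ has tor-amplitude $[0,1]$ (immediate from the long exact sequence, since $v_d$ is surjective on $H^0$ and $\mc{Q}$ is locally free), which is what makes $L$ tor-amplitude $[0,1]$ and hence the projection quasi-smooth, and (ii) an explicit remark that properness and quasi-smoothness are both Zariski-local on the target, so the local closed embeddings into proper Grassmannian bundles suffice without having to glue the embeddings themselves.
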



For a proper quasi-smooth morphism $f:X\to Y$, the derived pullback and pushforward are all well-defined in the $\infty$-category of perfect complexes
\begin{equation*}
  f^{*}:Perf(Y)\to Perf(X), \quad f_{*}:Perf(X)\to Perf(Y).
\end{equation*}
Let $f_{!}$ be the left-adjoint functor $f^{*}$, and $f^{!}$ by the right-adjoint functor of $f_{*}$.  By the Grothendieck duality, we have
\begin{equation}
  \label{dual}
  f_{!}\cong f_{*}(-\otimes K_{Y/X})[vdim(f)], \quad f^{!}\cong  (-\otimes K_{Y/X})\circ f^{*}[vdim(f)].
\end{equation}

\subsection{The incidence varieties}
\label{sec:2.2}
We notice that if $\mc{E}$ has tor-amplitude $[0,1]$, the shifted dual $\mc{E}^{\vee}[1]$ also has tor amplitude $[0,1]$. 

\begin{definition}[Incidence correspondence, Definition 2.7 of \cite{jiang2023derived}]
  Given $(d_{+},d_{-})\in \mb{Z}_{\geq 0}^{2}$, the incidence scheme $Inc_{X}(\mc{E},d_{+},d_{-})$ can be defined in two ways, which are equivalent due to \cite{jiang2022grassmanian}:
  \begin{enumerate}
  \item Over the projection morphism $pr_{+}:Gr_{X}(\mc{E},d_{+})\to X$, let $\mc{E}_{+}$ be the fiber of the universal quotient
     \begin{equation*}
    \theta_{+}: pr^{*}_{+}\mc{E}\to \mc{V}^{+}.
  \end{equation*}
  we define
  \begin{equation*}
    Inc_{X}(\mc{E},d_{+},d_{-}):=Gr_{Gr_{X}(\mc{E},d+)}(\mc{E}_{+}^{\vee}[1],d_{-}).
  \end{equation*}
\item Over the projection morphism $pr_{-}:Gr_{X}(\mc{E}^{\vee}[1],d_{-})\to X$, let $\mc{E}_{-}$ be the fiber of the universal quotient
   \begin{equation*}
    \theta_{-}: pr^{*}_{-}\mc{E}^{\vee}[1]\to \mc{V}^{-\vee}.
  \end{equation*}
  we define
   \begin{equation*}
  Inc_{X}(\mc{E},d_{+},d_{-}):= Gr_{Gr_{X}(\mc{E}^{\vee}[1],d_{-})}(\mc{E}_{-}^{\vee}[1],d_{+}).
  \end{equation*} 
  \end{enumerate}
 The above definitions induce canonical projection morphisms:
  \begin{equation*}
    r^{+}_{d_{+},d_{-}}:Inc_{X}(\mc{E},d_{+},d_{-})\to Gr_{X}(\mc{E},d_{+}), \quad r^{-}_{d_{+},d_{-}}:Inc_{X}(\mc{E},d_{+},d_{-})\to Gr_{X}(\mc{E}^{\vee}[-1],d_{-}).
  \end{equation*}
  which we will also abberivate as $r^{+},r^{-}$ respectively.
\end{definition}

By \cref{thm:grass}, $r^{+}$ and $r^{-}$ are both quasi-smooth and proper, and the relative virtual dimensions are
\begin{equation*}
  vdim(r^{+})=d_{-}(-e+d_{+}-d_{-}), \quad vdim(r^{-})=d_{+}(e+d_{-}-d_{+}).
\end{equation*}
The virtual dimension of the incidence variety $Inc_{X}(\mc{E},d_{+},d_{-})$ over $X$ is
  \begin{equation*}
    d_{-}(-e-d_{-})+d_{+}(e-d_{+})+d_{-}d_{+}.
  \end{equation*}
Hence the functor
\begin{equation*}
  r^{+}_{*}r^{-*}:Perf(Gr_{X}(\mc{E}^{\vee}[-1],d_{-}))\to Perf(Gr_{X}(\mc{E},d_{+}))
\end{equation*}
is represented by the Fourier-Mukai transform
\begin{equation*}
 \Phi_{d_{+},d_{-}}:=(r^{+}\times r^{-})_{*}(\mc{O}_{Inc_{X,d_{+},d_{-}}})\in QCoh(Gr_{X}(\mc{E}^{\vee}[-1],d_{-})\times Gr_{X}(\mc{E},d_{+})), 
\end{equation*}
 Moreover, let $\Psi_{d_{+},d_{-}}^{L}$ and $\Psi_{d_{+},d_{-}}$ be the left and adjoint functor of $\Phi_{d_{+},d_{-}}$. By the Grothendieck duality \cref{dual}, we have
\begin{equation*}
  \Psi_{d_{+},d_{-}}^{L}\cong (r^{-}\times r^{+})_{*} (K_{r^{-}}[vdim (r^{-})]), \quad  \Psi_{d_{+},d_{-}}\cong (r^{-}\times r^{+})_{*} (K_{r^{+}}[vdim (r^{+})])
\end{equation*}
as the Fourier-Mukai kernels.
\begin{remark}
  We will not calculate $K_{r^-}$ and $K_{r^+}$ explicitly in this paper as we do not need it. The reader can refer to \cite{jiang2022derived} for the explicit formula. However, we should point out that they are generated by tautological complexes over the incidence variety.
\end{remark}
\subsection{Semi-orthogonal decomposition and decomposition of the diagonal}
When $e:=rank(\mc{E})\geq 0$ is positive, by Toda \cite{MR4608555} and Jiang \cite{jiang2023derived}, for any $d\geq 0$ the category
\begin{equation*}
  Perf(Gr_{X}(\mc{E},d))
\end{equation*}
has a semi-orthogonal decomposition by copies of
\begin{equation*}
  Perf(Gr_{X}(\mc{E}^{\vee}[1],d')), \quad max\{d-e, 0\}\leq d'\leq d.
\end{equation*}
Moreover, Jiang \cite{jiang2023derived} proved that the functors are given as Fourier-Mukai transforms through the incidence varieties. Now we introduce a special case when $e=1$.
\begin{theorem}[\cite{jiang2023derived}]
  \label{thm1}
  If $e=1$, then for any $d>0$, the functors $\Phi_{d,d}$ and $\Phi_{d.d-1}$ are all fully faithful and we have the semi-orthogonal decomposition
  \begin{equation*}
    Perf(Gr_{X}(\mc{E},d))=<Im(\Phi_{d,d-1}),Im(\Phi_{d,d})>.
  \end{equation*}
\end{theorem}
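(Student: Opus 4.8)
The plan is to deduce the rank-one case of the semi-orthogonal decomposition from the general theorem of Toda \cite{MR4608555} and Jiang \cite{jiang2023derived}, and then verify that in this special case the decomposition collapses to exactly two pieces with the stated Fourier–Mukai descriptions. First I would recall the general statement: for $\mc{E}$ of rank $e\geq 0$ and tor-amplitude $[0,1]$, the category $Perf(Gr_X(\mc{E},d))$ admits a semi-orthogonal decomposition into copies of $Perf(Gr_X(\mc{E}^\vee[1],d'))$ with $\max\{d-e,0\}\leq d'\leq d$, and the embedding functors are realized by the Fourier–Mukai kernels $\Phi_{d,d'}$ attached to the incidence varieties $Inc_X(\mc{E},d,d')$. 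Specializing to $e=1$, the inequality $\max\{d-1,0\}\leq d'\leq d$ forces $d'\in\{d-1,d\}$ for every $d>0$, so the decomposition has precisely the two summands $Im(\Phi_{d,d-1})$ and $Im(\Phi_{d,d})$, ordered as in the statement.

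The substantive steps are then: (i) check that $\Phi_{d,d}$ and $\Phi_{d,d-1}$ are fully faithful, and (ii) check that their images are semi-orthogonal in the stated order and generate $Perf(Gr_X(\mc{E},d))$. Step (i) I would obtain by computing $\Psi^L_{d,d'}\circ\Phi_{d,d'}$ (or $\Psi^R_{d,d'}\circ\Phi_{d,d'}$) and showing it is the identity functor; here one uses the explicit adjoint kernels $\Psi^L_{d',d}\cong (r^-\times r^+)_*(K_{r^-}[vdim(r^-)])$ and $\Psi^R_{d',d}\cong (r^-\times r^+)_*(K_{r^+}[vdim(r^+)])$ recorded in \cref{sec:2.2}, together with base change along the two projections $r^\pm$ from $Inc_X(\mc{E},d,d')$. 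Since $Inc_X(\mc{E},d,d')$ is itself a derived Grassmannian over $Gr_X(\mc{E},d)$ (the bundle $\mc{E}_+^\vee[1]$ has rank $-1+d-d'$ over that base, which is $0$ when $d'=d-1$ and $-1$ when... — one must be careful with signs and use the correct incarnation \eqref{eq22}), the composition reduces to a pushforward–pullback along an explicit projective-bundle-like morphism whose cohomology is computed by a Borel–Weil–Bott / Koszul argument. Step (ii) follows from the general theorem, but one can also verify semi-orthogonality directly by the same base-change-plus-vanishing computation: the Hom from $Im(\Phi_{d,d})$ to $Im(\Phi_{d,d-1})$ is governed by the cohomology of a bundle on a fiber product of the two incidence varieties, and the relevant weights land outside the Bott vanishing range.

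The main obstacle I expect is bookkeeping with the derived structure and the virtual dimensions: the incidence variety carries a perfect obstruction theory that does \emph{not} agree with the fiber-product scheme structure, so all pushforwards must be taken in the derived sense, and the twists by $K_{r^\pm}$ and the shifts by $vdim(r^\pm)$ must be tracked exactly for Grothendieck duality \eqref{dual} to produce the identity rather than a nontrivial line-bundle twist. In particular, when $d'=d-1$ the virtual dimension $vdim(r^-)=d(e+d_--d_+)=d(1-1)=0$ so $r^-$ is "virtually finite", and one must confirm that $r^-$ is in fact an isomorphism of derived schemes (not merely a birational classical map) in order to conclude $\Phi_{d,d-1}$ is fully faithful with the expected image; this is the point where Jiang's Fourier–Mukai refinement \cite{jiang2023derived} of Toda's result is essential, and I would cite it rather than reprove it. Once the identifications of the incidence varieties with honest derived Grassmannians (or projective bundles) are in place, the rest is a routine application of \cite{jiang2023derived} together with the vanishing computations above.
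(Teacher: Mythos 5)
Your approach matches the paper's: \cref{thm1} is stated there purely as a citation of \cite{jiang2023derived}, the only ``proof'' being the observation in your first paragraph that for $e=1$ the range $\max\{d-e,0\}\le d'\le d$ in the general Toda--Jiang semi-orthogonal decomposition collapses to $d'\in\{d-1,d\}$, so the decomposition has exactly the two claimed pieces with embeddings given by the incidence-variety kernels. The extra verification you sketch is not carried out in the paper, and one side remark there is off --- when $d'=d-1$ the map $r^-$ need not be an isomorphism of derived schemes (the incidence variety is $Gr(\mc{E}_-^{\vee}[1],d)$ for a rank-$d$ complex, and full faithfulness of $\Phi_{d,d-1}$ instead comes from a base-change computation over $Inc\times_{Gr_X(\mc{E},d)}Inc$) --- but since you explicitly defer those steps to the citation, as the paper does, this does not affect the argument.
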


\begin{corollary}
  Let $\Psi_{d,d-1}$ be the right adjoint functor of $\Phi_{d,d-1}$ and $\Psi_{d,d}^{L}$ be the left-adjoint fucntor of $\Phi_{d,d}$. Then there exists a canonical triangle of Fourier-Mukai transforms
\begin{equation*}
    \Phi_{d,d-1}\Psi_{d,d-1}\to \mc{O}_{\Delta}\to \Phi_{d,d}\Psi_{d,d}^{L}
\end{equation*}
where $\Delta$ is the diagonal embedding of $Gr_{X}(\mc{E},d)$. Moreover, we have
\begin{equation*}
    \Psi_{d,d-1}\Phi_{d,d-1}\cong \mc{O}_{\Delta}, \quad \Psi_{d,d}^{L}\Phi_{d,d}\cong \mc{O}_{\Delta}, \quad \Psi_{d,d-1}\Phi_{d,d}\cong 0, \quad \Psi_{d,d}^{L}\Phi_{d,d-1}\cong 0.
\end{equation*}
\end{corollary}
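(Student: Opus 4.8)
The plan is to deduce both \cref{eq21} and \cref{eq22} formally from the semi-orthogonal decomposition of \cref{thm1} by the standard projection/mutation calculus for semi-orthogonal decompositions, while keeping careful track of the fact that every functor in sight is a Fourier--Mukai transform. All of $\Phi_{d,d-1}$, $\Phi_{d,d}$, and their adjoints $\Psi^{R}_{d,d-1}$, $\Psi^{L}_{d,d}$ are Fourier--Mukai transforms along the proper quasi-smooth projections $r^{\pm}$ from the incidence varieties (for the adjoints with the Grothendieck-dual kernels recorded in \cref{sec:2.2}), and a composite of Fourier--Mukai transforms along proper quasi-smooth morphisms is again one, with kernel the convolution of the two kernels. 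Since the derived Grassmannians here are proper over $X$, a morphism---and more generally an exact triangle---of such functors is induced by a (unique) morphism, resp. triangle, of kernels in the $\infty$-category of quasi-coherent sheaves on the appropriate product; so it suffices to prove all the statements at the level of functors and then transport them back.

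First I would record the two diagonal identities. Since $\Phi_{d,d-1}$ is fully faithful, the unit of the adjunction $(\Phi_{d,d-1},\Psi^{R}_{d,d-1})$ is an equivalence, whence $\Psi^{R}_{d,d-1}\Phi_{d,d-1}\cong id\cong \mc{O}_{\Delta}$; dually, $\Phi_{d,d}$ being fully faithful makes the counit of $(\Psi^{L}_{d,d},\Phi_{d,d})$ an equivalence, whence $\Psi^{L}_{d,d}\Phi_{d,d}\cong id\cong \mc{O}_{\Delta}$. Next, \cref{thm1} asserts that $Im(\Phi_{d,d-1})$ and $Im(\Phi_{d,d})$ form a semi-orthogonal decomposition of $Perf(Gr_{X}(\mc{E},d))$; equivalently, each of the two images is the orthogonal of the other on the appropriate side. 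Hence for every object $x$ the counit triangle $\Phi_{d,d-1}\Psi^{R}_{d,d-1}(x)\to x\to C$ has $C\in Im(\Phi_{d,d})$ (a standard consequence of full faithfulness plus adjunction) while the unit triangle $F\to x\to \Phi_{d,d}\Psi^{L}_{d,d}(x)$ has $F\in Im(\Phi_{d,d-1})$; by the uniqueness of the two-step semi-orthogonal filtration of $x$ these two triangles are canonically identified, which is precisely \cref{eq21}. Finally, \cref{eq22} drops out by specializing \cref{eq21}: taking $x=\Phi_{d,d-1}(y)$ and using $\Psi^{R}_{d,d-1}\Phi_{d,d-1}\cong id$, the first arrow of \cref{eq21} is an equivalence, so its cofiber $\Phi_{d,d}\Psi^{L}_{d,d}\Phi_{d,d-1}(y)$ vanishes, giving $\Psi^{L}_{d,d}\Phi_{d,d-1}\cong 0$ by conservativity of $\Phi_{d,d}$; symmetrically, taking $x=\Phi_{d,d}(z)$ and using $\Psi^{L}_{d,d}\Phi_{d,d}\cong id$ gives $\Psi^{R}_{d,d-1}\Phi_{d,d}\cong 0$. (The vanishing $\Psi^{L}_{d,d}\Phi_{d,d-1}\cong 0$ can also be seen directly from the semi-orthogonality of \cref{thm1} via $\mathrm{Hom}(\Psi^{L}_{d,d}\Phi_{d,d-1}(y),w)\cong \mathrm{Hom}(\Phi_{d,d-1}(y),\Phi_{d,d}(w))$.)

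The step I expect to require the most care is the passage from functors back to Fourier--Mukai kernels: one needs that in this derived-geometric setting a map of Fourier--Mukai transforms is induced by a unique map of kernels, and that cofibers of kernels compute cofibers of the associated functors. This is available because the derived Grassmannians are proper over $X$ and the ambient $\infty$-categories of perfect complexes are well behaved, but some attention is needed since $\mc{E}$ is only assumed to be of Tor-amplitude $[0,1]$ rather than locally free, so that the Grassmannians genuinely carry a nontrivial derived structure; alternatively one could sidestep this by building the triangle \cref{eq21} at the kernel level directly from a resolution of the diagonal coming from the incidence correspondences, but the route above seems cleanest given that \cref{thm1} already packages the semi-orthogonal decomposition together with its Fourier--Mukai kernels. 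Everything else is formal semi-orthogonal decomposition bookkeeping.
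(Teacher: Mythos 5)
Your argument is correct and follows essentially the same route as the paper: the identities are formal consequences of the semi-orthogonal decomposition of \cref{thm1} at the level of functors, and the only substantive point is transporting them back to the level of Fourier--Mukai kernels, which the paper handles exactly as you propose, by lifting everything to $\infty$-categories where continuous functors are equivalent to kernels (the paper cites Theorem 1.2(2) of Ben-Zvi--Francis--Nadler for this). The paper's proof consists of just that last remark, so your write-up simply supplies the functor-level semi-orthogonal bookkeeping that the paper leaves implicit.
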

\begin{proof}trans
  We only need to show natural transformations of functors are induced by morphisms of Fourier-Mukai kernels. As all the settings can be naturally lifted to $\infty$-categories, it follows from Theorem 1.2(2) of \cite{MR2669705}.
\end{proof}

From now on, we denote 
\begin{equation*}
  \Psi_{d,d-1}:=\Psi_{d,d-1}^R, \quad \Psi_{d,d}:=\Psi_{d,d}^{L}.
\end{equation*}
\begin{corollary}
  \label{cor:23}
  Let $[\Psi_{d,d-1}]$, $[\Psi_{d,d}]$, $[\Psi_{d,d-1}]$, $[\Psi_{d,d}]$ be the corresponding correspondences at the level of Grothendieck group of perfect complexes. Then we have
  \begin{equation}
    \label{eq21}
    [\Psi_{d,d-1}][\Phi_{d,d-1}]=id=[\Psi_{d,d}][\Phi_{d,d}] \quad [\Psi_{d,d-1}][\Phi_{d,d}]=0=[\Psi_{d,d}][\Phi_{d,d-1}],
  \end{equation}
and 
\begin{equation}\label{eq22} 
  [\Phi_{d,d-1}][\Psi_{d,d-1}]+[\Phi_{d,d}][\Psi_{d,d}]=id.
\end{equation}
\end{corollary}

\section{Mukai pairing, K-theoretic and cohomological correspondences}
Let $\mb{H}$ be a cohomology theory other than the Grothendieck groups of coherent sheaves. In this section, we always assume that $X$ and $Y$ are smooth projective varieties. Given an element in the Grothendieck group of coherent sheaves $f\in K(X\times Y)$, one can define the K-theoretic correspondence
\begin{equation*}
  \Gamma_{X\to Y}^{\mu}:K(X)\to K(Y), \quad \Gamma_{X\to Y}^{\mu}(\cdot)=\pi_{Y,*}(\pi_{X}^{*}(\cdot)\otimes \mu).
\end{equation*}
Similarly, for any element $u\in \mb{H}^{*}(X\times Y,\mb{Q})$, one can define the cohomological correspondence
\begin{equation*}
  \gamma_{X\to Y}^{u}:\mb{H}^{*}(X,\mb{Q})\to \mb{H}^{*}(Y,\mb{Q}), \quad \gamma_{X\to Y}^{u}(\cdot)=\pi_{Y*}(\pi_{X}^{*}(\cdot).\mu).
\end{equation*}
Here $\mb{H}^{*}$ can be one of the Chow groups $CH^{*}$, the singular cohomology $H^{*}$, the Hodge cohomology $H^{*,*}$ or the Hochschild homology $HH_{*}$.

One may expect that two transforms can be naturally compared by the Chern character, i.e. we have a commutative diagram
\begin{equation}
  \label{eq26}
  \begin{tikzcd}
    K(X)\ar{r}{\Gamma_{X\to Y}^{\mu}}\ar{d}{ch} & K(Y)\ar{d}{ch} \\
    \mb{H}^{*}(X,\mb{Q})\ar{r}{\gamma_{X\to Y}^{ch(\mu)}} & \mb{H}^{*}(Y,\mb{Q})
  \end{tikzcd}
\end{equation}
where $ch$ is the Chern character. Unfortunately, \cref{eq26} in general is not true due to the Grothendieck-Riemann-Roch theorem. Caldararu \cite{1362544419271316864} found the correct way to modify the above diagram by generalizing the Mukai vectors, which we will recall in this section.

\subsection{K-theoretic and cohomological correspondences}
By the cohomological grading, we have a decomposition based on the odd or even degree
\begin{equation*}
  H^{*}(X,\mb{Q})\cong H^{even}(X,\mb{Q})\oplus H^{odd}(X,\mb{Q})
\end{equation*}
where $H^{even}(X,\mb{Q})$ is a commutative ring.  By Section 2.4 of \cite{1362544419271316864}, there exists a unique formal series expansion such that
\begin{equation*}
  \sqrt{1}=1,\quad \sqrt{uv}=\sqrt{u}\sqrt{v},\quad u=(\sqrt{u})^{2}
\end{equation*}
for every smooth projective variety $X$ and any $u,v\in H^{even}(X,\mb{Q})$ (resp. $CH^{*}(X,\mb{Q})$ and $HH_{0}(X)$) with constant term $1$. We define $\hat{A}_{X}\in \mb{H}^{*}(X,\mb{Q})$ as
\begin{equation*}
  \hat{A}_{X}:=td_{X}\cdot \sqrt{ch(K_{X})}
\end{equation*}
where $K_{X}$ is the canonical line bundle of $X$ and $td_{X}$ is the Todd class of $X$.

\begin{definition}[Definition 2.1 of \cite{1362544419271316864}]
  The directed Mukai vector of an element $\mu\in K(X\times Y)$ is defined by
  \begin{equation*}
    v(\mu,X\to Y):=ch(\mu)\cdot \sqrt{td_{X\times Y}}\cdot \sqrt[4]{ch(K_{Y})/ch(K_{X})}\in \mb{H}^{*}(X\times Y)
  \end{equation*}

  By taking the first space to be a point we obtain the definition of the Mukai vector of an object $\alpha\in K(X)$ as
  \begin{equation*}
    v(\alpha)=ch(\alpha)\sqrt{\hat{A}_{X}}\in \mb{H}^{*}(X).
  \end{equation*}
\end{definition}

\begin{definition}
  We define a map $(-)_{*}$ from the $K$-theoretic correspondences on $K(X\times Y)$ to cohomological correspondences $\mb{H}^{*}(X\times Y,\mb{Q})$ by
  \begin{equation*}
    \Gamma_{X\to Y *}^{\mu}:=\gamma_{X\to Y}^{v(\mu)}.
  \end{equation*}
\end{definition}

\begin{proposition}[Section 2.3 of \cite{1362544419271316864}]
  The map $(-)_{*}$ is linear with  the following properties:
  \begin{enumerate}
  \item Given correspondences $\mu:K(X)\to K(Y)$, $\nu:K(Y)\to K(Z)$, we have
    \begin{equation*}
      \nu_{*}\circ \mu_{*}\cong \nu_{*}\circ \mu_{*};
    \end{equation*}
  \item $id_{*}\cong id$;
  \item The following commutative diagram commutes:
    \begin{equation}
  \label{eq27}
  \begin{tikzcd}
    K(X)\ar{r}{\Gamma_{X\to Y}^{\mu}}\ar{d}{v} & K(Y)\ar{d}{v} \\
    \mb{H}^{*}(X,\mb{Q})\ar{r}{\gamma_{X\to Y}^{v(\mu)}} & \mb{H}^{*}(Y,\mb{Q})
  \end{tikzcd}
\end{equation}
  \end{enumerate}
\end{proposition}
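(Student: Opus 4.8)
The plan is to verify the stated properties of the map $(-)_*$ by reducing everything to the corresponding statements for the ordinary cohomological correspondence $\gamma^{u}$ together with the defining multiplicativity of the directed Mukai vector. First I would record the key algebraic identity underlying the composition law: if $\mu \in \mathit{Perf}(X\times Y)$ and $\nu \in \mathit{Perf}(Y\times Z)$ compose to $\nu \circ \mu := \pi_{XZ,*}(\pi_{XY}^*\mu \otimes \pi_{YZ}^*\nu)$, then by the Grothendieck–Riemann–Roch theorem $\mathrm{ch}(\nu\circ\mu)\cdot \sqrt{\mathrm{td}_{X\times Z}}$ equals $\pi_{XZ,*}\big(\pi_{XY}^*(\mathrm{ch}(\mu)\sqrt{\mathrm{td}_{X\times Y}})\cdot \pi_{YZ}^*(\mathrm{ch}(\nu)\sqrt{\mathrm{td}_{Y\times Z}})\cdot (\text{correction along } Y)\big)$, and the fourth-root factors $\sqrt[4]{\mathrm{ch}(K_Y)/\mathrm{ch}(K_X)}$ telescope so that the middle $Y$-contribution is exactly absorbed. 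This is precisely the computation in Section 2.3 of Caldararu \cite{1362544419271316864}; I would cite it rather than reproduce it, and then observe that on the level of cohomological correspondences this identity says exactly $\gamma^{v(\nu\circ\mu)}_{X\to Z} = \gamma^{v(\nu)}_{Y\to Z}\circ \gamma^{v(\mu)}_{X\to Y}$, i.e. $(\nu\circ\mu)_* = \nu_*\circ\mu_*$, which is property (1). (The displayed equation in the proposition as stated is evidently a typo for $(\nu\circ\mu)_* \cong \nu_*\circ\mu_*$.)

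For property (2), the Fourier–Mukai kernel of the identity functor on $\mathit{Perf}(X)$ is $\mathcal{O}_\Delta$, and I would compute $v(\mathcal{O}_\Delta, X\to X) = \mathrm{ch}(\Delta_*\mathcal{O}_X)\cdot\sqrt{\mathrm{td}_{X\times X}}\cdot\sqrt[4]{\mathrm{ch}(K_X)/\mathrm{ch}(K_X)}$; since the last factor is $1$ and $\mathrm{ch}(\Delta_*\mathcal{O}_X) = \Delta_*(\mathrm{td}_X^{-1})$ by GRR for the diagonal, the class $v(\mathcal{O}_\Delta)$ is the class whose induced correspondence is the identity on $\mathbb{H}^*(X,\mathbb{Q})$ — this is the standard fact that the diagonal, suitably twisted by Todd classes, is a unit for convolution. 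For property (3), additivity in distinguished triangles is immediate: $\mathrm{ch}$ is additive in triangles, hence so is $v(-) = \mathrm{ch}(-)\cdot(\text{fixed class})$, and $\gamma^{(-)}_{X\to Y}$ is linear in its argument, so $g_* = f_* + h_*$ whenever $f\to g\to h\to f[1]$. Property (4), the commutativity of the diagram \cref{eq27}, is the one genuinely new normalization: it is exactly the corrected Grothendieck–Riemann–Roch statement of Caldararu, saying that the Mukai vector $v(\alpha) = \mathrm{ch}(\alpha)\sqrt{\hat A_X}$ intertwines $\Gamma^\mu$ with $\gamma^{v(\mu)}$; I would derive it by taking the first variable in the composition law (1) to be a point, so that $\mu$ becomes an object $\alpha\in\mathit{Perf}(X)$, $v(\alpha, \mathrm{pt}\to X)$ reduces (using $\sqrt{\mathrm{td}_{\mathrm{pt}\times X}} = \sqrt{\mathrm{td}_X}$ and $\sqrt[4]{\mathrm{ch}(K_X)/\mathrm{ch}(K_{\mathrm{pt}})} = \sqrt[4]{\mathrm{ch}(K_X)}$, combining to $\sqrt{\hat A_X}$ after multiplying by the Todd factor appropriately) to the stated $v(\alpha)$, and then (1) specializes to the assertion that applying $\Gamma^\mu$ and then $v$ agrees with applying $v$ and then $\gamma^{v(\mu)}$.

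The main obstacle is bookkeeping rather than conceptual: one must check that the various half- and quarter-powers of Todd and canonical-bundle Chern characters combine correctly under pullback along the three projections $\pi_{XY},\pi_{YZ},\pi_{XZ}$ from $X\times Y\times Z$, using $\mathrm{td}_{X\times Y\times Z} = \pi_X^*\mathrm{td}_X\cdot\pi_Y^*\mathrm{td}_Y\cdot\pi_Z^*\mathrm{td}_Z$ (and likewise for $K$), so that the $Y$-factors telescope and exactly cancel the correction term produced by GRR applied to the proper pushforward $\pi_{XZ}$ along the fibre direction $Y$. Since these are all classes with constant term $1$ in $H^{\mathrm{even}}$ (or $CH^*$, or $HH_0$), the unique square-root operation fixed in Section 2.4 of \cite{1362544419271316864} is multiplicative and the manipulation is valid in every one of the listed cohomology theories uniformly. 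I would therefore present the proof as: state the GRR composition identity, cite \cite{1362544419271316864} for its proof, and then read off (1)–(4) as formal consequences.
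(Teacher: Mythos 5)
The paper offers no proof of this proposition at all---it is imported verbatim from Section 2.3 of Caldararu's paper---and your proposal does exactly what the paper does: defer the key GRR/telescoping computation to that reference and read off (1)--(4) as formal consequences. Your sketch of the details (the typo in (1), the computation $v(\mathcal{O}_\Delta)=[\Delta]$ via GRR for the diagonal, additivity of $\mathrm{ch}$, and the specialization of the composition law to a point for (4)) is correct and consistent with Caldararu's argument.
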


\subsection{Cohomological correspondences of derived Grassmannians}

Now we come back to the setting of \cref{sec:2.2}. Let $X$ be a smooth proper variety with a tor-amplitude $[0,1]$ perfect complex $\mc{E}$. We assume that for all $d\geq 0$, the derived Grassmannian
\begin{equation*}
  Gr_{X}(\mc{E},d), \quad Gr_{X}(\mc{E}^{\vee}[1],d)
\end{equation*}
are (classical) smooth varieties. We apply the Caldararu's construction for $[\Phi_{d_{+},d_{-}}]$ and by \cref{eq21} and \cref{eq22}  and have
\begin{corollary} 
  \label{cor1111} The cohomological correspondences $[\Phi_{d,d-1}]_*,[\Psi_{d,{d-1}}]_*,[\Phi_{d,d}]_*,[\Psi_{d,d}]_*$ satisfy the relations \cref{eq:EF}, if we denote
  \begin{align*}
   E:=[\Phi_{d,d-1}]_*,\quad F:=[\Psi_{d,d-1}]_*, \quad K_{1}:=[\Phi_{d,d}]_*, \quad K_{-1}:=[\Phi_{d,d}]_*.
  \end{align*}
\end{corollary}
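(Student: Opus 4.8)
The plan is to push everything through the functorial assignment $(-)_{*}$ from Fourier--Mukai transforms to cohomological correspondences constructed in the previous subsection after Caldararu \cite{1362544419271316864}, applied to the triangle \cref{eq21} and the isomorphisms \cref{eq22} of Fourier--Mukai kernels recorded in the Corollary to \cref{thm1}. Recall the four properties of $(-)_{*}$ established there: it takes a composite $(\nu\circ\mu)_{*}$ to $\nu_{*}\circ\mu_{*}$, takes $\mathrm{id}_{*}$ (whose kernel is $\mc{O}_{\Delta}$) to $\mathrm{id}$, takes the zero kernel to $0$, and is additive on distinguished triangles. These are available precisely because we have assumed that all the derived Grassmannians $Gr_{X}(\mc{E},d)$ and $Gr_{X}(\mc{E}^{\vee}[1],d)$ are smooth projective varieties. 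I will also correct the evident typo in the statement: $K_{-1}$ should be $\Psi_{d,d*}^{L}$, the cohomological correspondence attached to the left adjoint $\Psi_{d,d}^{L}$.

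First I would translate the four isomorphisms of \cref{eq22}. Applying $(-)_{*}$ to $\Psi_{d,d-1}^{R}\Phi_{d,d-1}\cong\mc{O}_{\Delta}$ and to $\Psi_{d,d}^{L}\Phi_{d,d}\cong\mc{O}_{\Delta}$ gives $FE=1$ and $K_{-1}K_{1}=1$; applying it to $\Psi_{d,d-1}^{R}\Phi_{d,d}\cong 0$ and to $\Psi_{d,d}^{L}\Phi_{d,d-1}\cong 0$ gives $FK_{1}=0$ and $K_{-1}E=0$. This accounts for the first two lines of \cref{eq:EF}. Next I would apply $(-)_{*}$ to the distinguished triangle of kernels $\Phi_{d,d-1}\Psi_{d,d-1}^{R}\to\mc{O}_{\Delta}\to\Phi_{d,d}\Psi_{d,d}^{L}$ from \cref{eq21}: additivity on triangles yields $(\mc{O}_{\Delta})_{*}=(\Phi_{d,d-1}\Psi_{d,d-1}^{R})_{*}+(\Phi_{d,d}\Psi_{d,d}^{L})_{*}$, i.e. $1=EF+K_{1}K_{-1}$, which is the last relation of \cref{eq:EF}. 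Since these exhaust the defining relations of $\mc{E}$, the corollary follows.

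The only substantive input beyond bookkeeping is that \cref{eq21} and \cref{eq22} hold as a distinguished triangle and as isomorphisms of Fourier--Mukai \emph{kernels} in the $\infty$-category of quasi-coherent sheaves on the relevant products, not merely as relations among functors, so that $(-)_{*}$ may be applied termwise; this is exactly what the Corollary to \cref{thm1} provides, via the lifting of natural transformations to kernels, and it ultimately uses the rank hypothesis $e=1$ entering \cref{thm1}. Accordingly, I expect the main point requiring care to be not any computation but the tracking of sources and targets: each of $E,F,K_{1},K_{-1}$ is a correspondence between the cohomologies of possibly different Grassmannians $Gr_{X}(\mc{E},d)$ and $Gr_{X}(\mc{E}^{\vee}[1],d')$, and for every relation in \cref{eq:EF} one must first check that the two sides are composites with matching intermediate spaces before identifying them through the kernels above.
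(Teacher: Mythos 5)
Your proposal is correct and follows exactly the paper's (very terse) argument: apply Caldararu's functor $(-)_{*}$, using its compatibility with composition, identity, and distinguished triangles, to the isomorphisms \cref{eq22} and the triangle \cref{eq21}. You also correctly identify the typo ($K_{-1}$ should be $\Psi_{d,d*}^{L}$) and the one genuine subtlety—that \cref{eq21} and \cref{eq22} must hold at the level of kernels, which the paper secures via the lifting of natural transformations to Fourier--Mukai kernels.
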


The correspondences in \cref{cor1111} are homogeneous with degree $0$ for the Hochschild homology, but not for the singular cohomology or Chow groups and we need to modify them. Recall that the incidence variety
\begin{equation*}
  Inc_{X}(\mc{E},d_{+},d_{-})
\end{equation*}
is quasi-smooth with virtual dimension $d_{0}=dim(X)+(1-d_{+})d_{+}-(d_{-}+1)d_{-}+d_{-}d_{+}$. By Ciocan-Fontanine and Kapranov \cite{ciocan-fontanine07:virtual}, the derived structure induces a perfect obstruction theory on the classical scheme of $Inc(X,d_{+},d_{-})$ and thus induces a virtual fundamental class in the sense of Li-Tian \cite{MR1467172} or Behrend-Fantechi \cite{MR1437495}:
$$Vir_{d_{+},d_{-}}:=[Inc_{X}(\mc{E},d_{+},d_{-})]^{vir}\in CH_{d_{0}}(Inc_{X}(\mc{E},d_{+},d_{-})).$$
Now we define cohomological correspondences $\psi_{d,d-1},\psi_{d,d},\phi_{d,d-1},\phi_{d,d}$ such that
\begin{enumerate}
\item $\phi_{d,d}$ is induced by $Vir_{d,d}$, and $\psi_{d,d}$ is induced by $(-1)^{d}Vir_{d,d}$ but in a different direction. 
\item both $\phi_{d,d-1}$ and $\psi_{d,d-1}$ are induced by $Vir_{d,d-1}$ but in two different directions. 
\end{enumerate} 

\begin{proposition}
  \label{prop:3155}
  The operators $\phi_{d,d-1}$ and $\psi_{d,d-1}$ preserved the cohomological degree and the operators $\phi_{d,d}$ and $\psi_{d,d}$ shifted the Chow group (resp. cohomological, Hodge) degree by $d$ (resp. $2d$, $(d,d)$) and $-d$ (resp. $-2d$, $(-d,-d)$). Moreover, the cohomological correspondences $\phi_{d,d-1},\psi_{d,s-1},\phi_{d,d},\psi_{d,d}$ also satisfy the relations \cref{eq:EF}  if we denote
  \begin{align*}
   E:=\phi_{d,d-1},\quad F:=\psi_{d,d-1}, \quad K_{1}:=\phi_{d,d}, \quad K_{-1}:=\phi_{d,d}.
  \end{align*}

\end{proposition}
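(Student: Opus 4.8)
The plan is to deduce Proposition~\ref{prop:3155} from the triangle~\cref{eq21}, the orthogonality relations~\cref{eq22}, and the functoriality properties of Caldararu's map $(-)_{*}$, while carefully tracking the cohomological-degree twist coming from the virtual fundamental class. First I would establish the degree statement. The operators $\Phi_{d,d-1*}, \Psi^{R}_{d,d-1*}, \Phi_{d,d*}, \Psi^{L}_{d,d*}$ in Hochschild homology are degree-preserving because the Mukai-vector formalism always produces degree-$0$ maps on $HH_{*}$; to pass to $CH^{*}$, $H^{*}$, $H^{*,*}$ I would replace the full Mukai vector by the top piece, namely the cohomology class of the (virtual) structure sheaf of the incidence variety capped suitably, which is exactly $Vir_{d_{+},d_{-}}$ up to the $\sqrt{\hat A}$-corrections. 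The key input here is that $r^{+}$ and $r^{-}$ are quasi-smooth with the relative virtual dimensions already recorded in \cref{sec:2.2}: $vdim(r^{+})=d_{-}(-e+d_{+}-d_{-})$ and $vdim(r^{-})=d_{+}(e+d_{-}-d_{+})$. Specializing $e=1$ and the two cases $(d_{+},d_{-})=(d,d-1)$ and $(d,d)$, a direct computation gives $vdim(r^{+})=0=vdim(r^{-})$ in the first case, so $\phi_{d,d-1}$ and $\psi_{d,d-1}$ are degree-preserving; in the second case $vdim(r^{-})-vdim(r^{+})=\pm 2d$ (resp. $\pm d$ for Chow, $(\pm d,\pm d)$ for Hodge), which accounts for the asserted shift of $\phi_{d,d}$ and $\psi_{d,d}$ in opposite directions, together with the sign $(-1)^{d}$ built into the definition of $\psi_{d,d}$.

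Next I would prove the relations~\cref{eq:EF}. By the functoriality of $(-)_{*}$ — namely $(\nu\circ\mu)_{*}\cong\nu_{*}\circ\mu_{*}$, $id_{*}\cong id$, and additivity on triangles — applying $(-)_{*}$ to the four isomorphisms in \cref{eq22} yields immediately $\Psi^{R}_{d,d-1*}\Phi_{d,d-1*}=id$, $\Psi^{L}_{d,d*}\Phi_{d,d*}=id$, $\Psi^{R}_{d,d-1*}\Phi_{d,d*}=0$, $\Psi^{L}_{d,d*}\Phi_{d,d-1*}=0$; under the naming $E=\phi_{d,d-1}$, $F=\psi_{d,d-1}$, $K_{1}=\phi_{d,d}$, $K_{-1}=\psi_{d,d}$ (the displayed $K_{-1}:=\phi_{d,d}$ is a typo for $\psi_{d,d}$), these read $FE=1$, $K_{-1}K_{1}=1$, $FK_{1}=0$, $K_{-1}E=0$. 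The final relation $K_{1}K_{-1}+EF=1$ comes from applying $(-)_{*}$ to the distinguished triangle~\cref{eq21}, $\Phi_{d,d-1}\Psi^{R}_{d,d-1}\to\mc{O}_{\Delta}\to\Phi_{d,d}\Psi^{L}_{d,d}$, using additivity: $(\mc{O}_{\Delta})_{*}=id$ on the middle term, and $(\Phi_{d,d-1}\Psi^{R}_{d,d-1})_{*}=\phi_{d,d-1}\psi_{d,d-1}=EF$, $(\Phi_{d,d}\Psi^{L}_{d,d})_{*}=\phi_{d,d}\psi_{d,d}=K_{1}K_{-1}$, so $id=EF+K_{1}K_{-1}$.

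The one genuinely delicate point — the main obstacle — is reconciling the Fourier--Mukai-level operators $\Phi_{d_{+},d_{-}*}$ produced by Caldararu's recipe with the "naive" virtual-class operators $\phi_{d_{+},d_{-}},\psi_{d_{+},d_{-}}$ defined via $Vir_{d_{+},d_{-}}$, in cohomology and Chow groups. These differ by the factors $\sqrt{td}$ and $\sqrt[4]{ch(K_{Y})/ch(K_{X})}$ in the directed Mukai vector, which are invertible but not of pure degree. I would handle this by showing that, after conjugating the whole picture by the (graded, invertible) operator $\cup\sqrt{\hat A_{Gr_{X}(\mc{E},d)}}$ on each $\mb{H}^{*}(Gr_{X}(\mc{E},d),\mb{Q})$ — equivalently, working with Mukai vectors $v(\alpha)=ch(\alpha)\sqrt{\hat A}$ throughout — the algebra relations are preserved (conjugation is an algebra automorphism), while the degree-shift statements are about the top-degree component of the correspondence class, on which the extra factors contribute only the leading term $1$ and hence do not alter the shift. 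So one proves the relations at the Mukai-vector level where functoriality of $(-)_{*}$ is available verbatim from the cited proposition, and separately extracts the degree bookkeeping from the virtual dimensions of $r^{\pm}$; combining the two gives Proposition~\ref{prop:3155}. A small additional check, which I would include for completeness, is that the two directions attached to a single class $Vir_{d,d-1}$ (and $\pm Vir_{d,d}$) indeed correspond under $(-)_{*}$ to $\Phi$ and its adjoint $\Psi$ respectively — this is where the Grothendieck-duality formulas $\Psi^{L}\cong(r^{-}\times r^{+})_{*}(K_{r^{-}}[vdim(r^{-})])$ and $\Psi^{R}\cong(r^{-}\times r^{+})_{*}(K_{r^{+}}[vdim(r^{+})])$ from \cref{sec:2.2} enter, together with the vanishing of the relative canonical bundles when the relevant relative virtual dimension is $0$.
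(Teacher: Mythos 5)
Your proposal follows essentially the same route as the paper: the relations are first obtained for the Mukai-vector correspondences $\Phi_{d,d-1*},\Psi_{d,d-1*}^{R},\Phi_{d,d*},\Psi_{d,d*}^{L}$ by applying Caldararu's functor $(-)_{*}$ to the triangle \cref{eq21} and the orthogonality isomorphisms, and are then transferred to $\phi,\psi$ by observing --- via the virtual Grothendieck--Riemann--Roch theorem and the fact that the square roots of Todd classes and Chern characters of line bundles have constant term $1$ --- that each $\phi,\psi$ is the lowest-degree, pure-degree component of the corresponding $\Phi_{*},\Psi_{*}$, so the relations descend by comparing graded pieces. This is exactly the paper's argument; your conjugation by $\sqrt{\hat A}$ is an inessential repackaging of the same leading-term observation, and your degree bookkeeping via $vdim(r^{\pm})$ (including the sign $(-1)^{d}$ coming from the shift $[vdim(r^{-})]$ in the Grothendieck-duality formula) fills in details the paper leaves implicit.
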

\begin{proof}
  First, we notice that all the constant terms of roots of the Todd class and Chern classes of line bundles are $1$. By the virtual Grothendieck-Riemann-Roch theorem (i.e. Theorem 3.23 and Corollary 3.25 of \cite{khan2019virtual}), we also have
  \begin{equation*}
    ch(\mc{O}_{Inc_{X}(\mc{E},d_{+},d_{-})})=Vir_{d_{+},d_{-}}+C_{d_{+},d_{-}}
  \end{equation*}
  where $C_{d_{+},d_{-}}$ have higher degrees than $Vir_{d_{+},d_{-}}$. Thus we have $[\Phi_{d,d}]_*-\phi_{d,d}$ has a higher cohomological degree than $\phi_{d,d}$. Similar computations also holds for $[\Phi_{d,d-1}]_*-\phi_{d,d-1},[\Psi_{d,d}]_*-\psi_{d,d}$ and $[\Psi_{d,d-1}]_*-\psi_{d,d-1}$. Thus by checking the cohomological grading, the relations  \cref{eq:EF} for  $\phi_{d,d-1},\psi_{d,d-1},\phi_{d,d},\psi_{d,d}$ follows from \cref{cor1111}.
\end{proof} 

In a companion paper \cite{zhao2024}, we will prove that
\begin{proposition}
  The incidence variety
  \begin{equation*}
  Inc_{X}(\mc{E},d_{+},d_{-})
\end{equation*}
is classical and
$$Vir_{d_{+},d_{-}}:=[Inc_{X}(\mc{E},d_{+},d_{-})]\in CH_{d_{0}}(Inc_{X}(\mc{E},d_{+},d_{-})).$$
\end{proposition}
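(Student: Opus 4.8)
The statement to prove is that the incidence variety $Inc_{X}(\mc{E},d_{+},d_{-})$ is classical (i.e. has trivial derived structure), so that its virtual fundamental class equals its ordinary fundamental class. Recall the criterion quoted in the excerpt: a quasi-smooth derived scheme is classical if and only if its virtual dimension equals the dimension of its classical truncation. So the plan is to compute both numbers and check they agree.

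First I would recall the virtual dimension, which is already recorded in the excerpt:
\begin{equation*}
  d_{0}=\dim(X)+(1-d_{+})d_{+}-(d_{-}+1)d_{-}+d_{-}d_{+}.
\end{equation*}
The real content is to show that the classical scheme $\pi_{0}(Inc_{X}(\mc{E},d_{+},d_{-}))$ is equidimensional of dimension exactly $d_{0}$. For this I would work locally on $X$: Tor-amplitude $[0,1]$ means $\mc{E}$ is locally presented by a two-term complex $[W \xrightarrow{\sigma} V]$ of vector bundles, and correspondingly $\mc{E}^{\vee}[1]$ is $[V^{\vee}\xrightarrow{\sigma^{\vee}} W^{\vee}]$. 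The classical incidence scheme parametrizes, over a point $x\in X$, a $d_{+}$-dimensional quotient of $\mc{E}$ together with a $d_{-}$-dimensional quotient of $\mc{E}^{\vee}[1]$ satisfying the compatibility condition coming from the fiber-product description; concretely this amounts to a subspace $A_{+}\subset V_{x}$ of the right corank that contains the image of $\sigma_{x}$ (this is the data of $Gr_{X}(\mc{E},d_{+})$), a subspace $A_{-}\subset W^{\vee}_{x}$ that contains the image of $\sigma^{\vee}_{x}$, plus the incidence relation relating the two. I would stratify $X$ by the rank of $\sigma$ and, over each stratum, exhibit $\pi_{0}(Inc)$ as a tower of honest (classical) Grassmann bundles — the fiber of $Gr_{X}(\mc{E},d_{+})$ over a point where $\sigma$ has rank $\rho$ is a classical Grassmannian $Gr(d_{+}, \dim V - \rho)$ provided $d_{+}\le \dim V - \rho$, and similarly for the second factor and for the relative Grassmannian built on the fiber bundle $\mc{E}_{+}^{\vee}[1]$. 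Adding up the dimensions of base stratum plus the two relative Grassmannian fibers, and comparing the total with $d_{0}$, should give equality on the open stratum where $\sigma$ has maximal rank, and I would check that the lower strata contribute strictly smaller dimension so that the classical scheme is irreducible (or at least pure) of dimension $d_{0}$; combined with $\dim \pi_{0} = d_{0} = vdim$ and the Arinkin–Feigin criterion, this forces the derived structure to be trivial.

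An alternative, cleaner route avoids dimension counting in favor of the vanishing of obstructions: the cotangent complex $L_{Inc/X}$ can be assembled from the cotangent complexes of the two Grassmannian fibrations using the iterated fiber-product presentation \eqref{eq22}, each of which is given by Theorem 4.18 of \cite{jiang2022grassmanian} as $\mc{V}^{\vee}\otimes \mathrm{fib}(v)$. One would show that the natural map from the classical cotangent complex of $\pi_{0}(Inc)$ surjects onto (in fact is quasi-isomorphic to) the restriction of $L_{Inc/X}$, equivalently that $H^{1}(L_{Inc/X})$ vanishes — i.e. the obstruction space is zero — which again is checked fiberwise over $X$ after the rank stratification. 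This is essentially equivalent to the dimension computation but may be technically smoother since it is a local statement about Koszul complexes attached to the universal quotient maps.

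The main obstacle, in either approach, is handling the loci in $X$ where the section $\sigma$ of $\mc{E}$ degenerates: there the naive derived Grassmannian could a priori acquire nonreduced or higher-dimensional structure, and one must verify that the incidence condition together with the constraints $d_{+}\le \mathrm{rank}(\mc{E})=e$-type bounds (for $e=1$, $d_{\pm}$ small relative to the fiber ranks) keep the classical fibers of the expected dimension and keep the total space pure-dimensional. In the geometric application to $Inc_{X}(\mc{I}_o, d_{+},d_{-})$ for perverse coherent sheaves (where $e=1$), the relevant two-term complex and its degeneracy locus are explicit, so this check reduces to a finite case analysis; carrying out that analysis carefully — and in particular ruling out the appearance of embedded or excess components over the degeneracy stratum — is where the real work lies, and it is deferred to the companion paper \cite{zhao2024}.
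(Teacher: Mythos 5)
The paper does not actually prove this proposition: immediately before the statement it says the result ``will be proved in a companion paper \cite{zhao2024}'', so there is no in-paper argument to compare yours against. Your outline is nonetheless the natural one, and it matches the only hint the paper gives, namely the classicality criterion already quoted in Section 3 (a quasi-smooth derived scheme is classical if and only if its virtual dimension equals the dimension of its classical truncation, via Corollary 2.11 of \cite{Arinkin2015}). Computing $d_{0}$ and then bounding $\dim\pi_{0}(Inc_{X}(\mc{E},d_{+},d_{-}))$ by a rank stratification of the local presentation $[W\xrightarrow{\sigma}V]$ is the standard route.

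The genuine gap is that your sketch stops exactly at the step that carries all the content: the claim that the strata where $\sigma$ drops rank contribute strictly smaller dimension is \emph{false} for a general tor-amplitude $[0,1]$ complex on a general $X$ (take $\sigma\equiv 0$ on a component and the Grassmannian fibers jump everywhere), so the proposition cannot be proved unconditionally. You must invoke the standing hypothesis of that subsection --- that all the derived Grassmannians $Gr_{X}(\mc{E},d)$ and $Gr_{X}(\mc{E}^{\vee}[1],d)$ are already classical smooth varieties --- and extract from it the needed codimension bounds on the degeneracy loci of $\sigma$; the incidence variety is then an iterated Grassmannian construction over spaces you already control. Your proposal acknowledges this obstacle but then defers it to the same companion paper, so as written it is a plan rather than a proof; making the reduction from ``the two derived Grassmannians are classical'' to ``the incidence variety is classical'' precise is the part that still has to be done.
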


\section{Perverse Coherent Sheaves and their moduli spaces}
\label{sec4}
Let $S$ be a smooth projective surface. We denote by
\begin{equation*}
  p:\hat{S}\to S
\end{equation*}
the blow up of $S$ at a closed point $o\in S$. We denote $C:=p^{-1}(o)$ as the exceptional (rational) curve. We denote
\begin{equation*}
  \mc{O}_{C}(m):=\mc{O}(-mC)|_{C}
\end{equation*}
which is the unique degree $m$ line bundle on the rational curve $C$ for any $m\in \mb{Z}$. Given an ample line bundle $H$ on $S$, Nakajima-Yoshioka defined the perverse coherent sheaf as
\begin{definition}
  \label{def:41}
  A stable perverse coherent sheaf $E$ on $\hat{S}$ with respect to $H$ is a coherent sheaf such that
    \begin{equation*}
      Hom(E,\mc{O}_{C}(-1))=0,\quad Hom(\mc{O}_{C},E)=0
    \end{equation*}
    and $p_{*}E$ is Gieseker stable with respect to $H$.

    Given a coherent sheaf $E$ on $\hat{S}$ and an integer $m$, we define $E$ to be $m$-stable if $E\otimes \mc{O}(-mc)$ is a perverse stable sheaf.
\end{definition}

\begin{remark}
   When the rank of $E$ is $1$, $p_{*}E$ is stable with respect to $H$ if and only if $p_{*}E$ is torsion-free, and hence independent with the choice of $H$.
\end{remark}

By the work of  Nakajima-Yoshioka \cite{1361418519756759552,1050282810787470592,1050564285764182144}, the stable perverse coherent sheaf builds a bridge between the Hilbert scheme of points on $S$ and $\hat{S}$ (and more generally the moduli space of stable sheaves), which we recall in this section.

\subsection{Moduli space of perverse coherent sheaves}

We recall the decomposition of cohomology groups:
\begin{equation*}
  H^{*}(\hat{S}, \mb{Q})=p^{*}(H^{*}(\hat{S},\mb{Q}))\oplus \mb{Q}[C]
\end{equation*}
where $[C]\in H^{1,1}(\hat{S})\cap H^{2}(\hat{S},\mb{Q})$ is the fundamental class of $C$ satisfying $[C]^{2}=-1$.
\begin{definition}
  Given Chern characters $\hat{c}\in H^{*}(\hat{S},\mb{Q})$, we denote $M^{m}(\hat{c})$ as the moduli space of $m$-stable coherent sheaves on $\hat{S}$ with Chern character $\hat{c}$. Particularly, given integers $l,n$, we denote
  \begin{equation*}
    M^{m}(l,n):=M^{m}([\hat{S}]-l[C]+(-n-\frac{l^{2}}{2})[o]),
  \end{equation*}
  i.e. it consists of $m$-stable perverse coherent sheaves $E$ such that
  \begin{equation*}
    rank(E)=1, \quad c_{1}(E)=-l[C], \quad c_{2}(E)=n
  \end{equation*}
\end{definition}

By tensoring $\mc{O}(-mC)$, we have a canonical isomorphism
\begin{equation*}
  M^{m}(\hat{c})\cong M^{0}(\hat{c}e^{-m[C]}), \quad M^{m}(l,n)\cong M^{0}(l+m,n)
\end{equation*}

\begin{lemma}[Lemma 3.3 of \cite{koseki2021categorical} and Proposition 3.37 of \cite{1050282810787470592} ]
  For any integers $l,n$, the variety $M^{0}(l,n)$ is either empty or a smooth projective variety of $2n$ dimension. Particularly, fixing $n$, we have
  \begin{equation*}
    M^{0}(0,n)\cong S^{[n]},\quad M^{0}(l,n)\cong \hat{S}^{[n]} \text{ if } l\gg 0.
  \end{equation*}
\end{lemma}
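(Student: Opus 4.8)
The plan is to recall the structure of the arguments of Nakajima--Yoshioka \cite{1050282810787470592} and Koseki \cite{koseki2021categorical}, of which this lemma is a restatement. I will freely use the canonical isomorphism $M^{0}(l,n)\cong M^{m}(l-m,n)$ coming from $E\mapsto E\otimes\mc{O}(-mC)$, which lets one choose the most convenient value of $m$ at each step: smoothness, the dimension count, and the identification with $S^{[n]}$ are all invariant under it, while the identification with $\hat S^{[n]}$ is obtained by specializing to $M^{l}(0,n)$ with $l$ large.

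\emph{Smoothness and dimension.} For $[E]\in M^{0}(l,n)$ the Zariski tangent space is $\mathrm{Ext}^{1}(E,E)$ and obstructions lie in the traceless part $\mathrm{Ext}^{2}(E,E)_{0}$, while $E$ is simple: a perverse-stable sheaf has $\mathrm{Hom}(E,E)=\mb{C}$, since $p_{*}E$ is Gieseker stable and the vanishings $\mathrm{Hom}(\mc{O}_{C},E)=\mathrm{Hom}(E,\mc{O}_{C}(-1))=0$ of \cref{def:41} rule out endomorphisms with image supported on $C$. Thus the whole content of smoothness is the vanishing $\mathrm{Ext}^{2}(E,E)_{0}=0$. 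By Serre duality on $\hat S$ this space is dual to the traceless part of $\mathrm{Hom}(E,E\otimes K_{\hat S})$, and one kills a would-be nonzero homomorphism using $K_{\hat S}=p^{*}K_{S}+[C]$ together with Gieseker stability of $p_{*}E$ downstairs and the two perverse vanishings along $C$ upstairs. Granting this, $M^{0}(l,n)$ is smooth; a Riemann--Roch computation with $\mathrm{ch}(E)=[\hat S]-l[C]-(n+\tfrac{l^{2}}{2})[\mathrm{pt}]$ gives $\chi(E,E)=\chi(\mc{O}_{\hat S})-2n$, and combined with $\mathrm{Hom}(E,E)=\mb{C}$ and $\mathrm{Ext}^{2}(E,E)_{0}=0$ this forces the moduli space (of sheaves with the fixed determinant $\mc{O}(-lC)$) to have dimension $2n$, independently of $l$.

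\emph{The two identifications.} For $M^{0}(0,n)\cong S^{[n]}$ I would show that a rank-one perverse-stable sheaf $E$ with $c_{1}(E)=0$ has no subsheaf or quotient supported on $C$ — this is exactly what $\mathrm{Hom}(\mc{O}_{C},E)=\mathrm{Hom}(E,\mc{O}_{C}(-1))=0$ buys — so that $E$ is, up to the perverse structure, pulled back along $p$: one gets $E\cong Lp^{*}(p_{*}E)$ with $p_{*}E$ torsion-free of rank one and $c_{1}=0$, i.e.\ $p_{*}E=I_{W}$ for a length-$n$ subscheme $W\subset S$. Conversely the (perverse) pullback of $I_{W}$ is perverse-stable, and the two constructions are inverse as morphisms of moduli functors, yielding $M^{0}(0,n)\cong\mathrm{Hilb}^{n}(S)=S^{[n]}$. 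For $M^{0}(l,n)\cong\hat S^{[n]}$ with $l\gg 0$, pass to $M^{l}(0,n)$ and invoke boundedness: for the fixed Chern character $[\hat S]-n[\mathrm{pt}]$ only finitely many walls in the stability parameter $m$ occur, so for $m$ beyond the last one $m$-stability coincides with ordinary Gieseker stability on $\hat S$; since a rank-one torsion-free sheaf with trivial determinant is automatically Gieseker stable and is $I_{Z}$ for $Z\subset\hat S$ of length $n$, we obtain $M^{l}(0,n)\cong\mathrm{Hilb}^{n}(\hat S)=\hat S^{[n]}$ for $l\gg 0$.

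\emph{Main obstacle.} The technical heart is the obstruction vanishing $\mathrm{Ext}^{2}(E,E)_{0}=0$ for perverse coherent sheaves — the delicate interplay of Serre duality on $\hat S$, the twist by the exceptional class $[C]$, and the two perverse $\mathrm{Hom}$-vanishings — together with the boundedness input that makes $m$-stability stabilize as $m\to\infty$. Both are established in \cite{1050282810787470592,koseki2021categorical}, so in the body of the paper I would cite those references directly; the sketch above records why the statement takes exactly the form claimed, which is what is used in \cref{sec4} and beyond.
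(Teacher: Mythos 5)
The paper offers no proof of this lemma at all: it is imported verbatim as Lemma 3.3 of \cite{koseki2021categorical} and Proposition 3.37 of \cite{1050282810787470592}, and your proposal does the same thing, deferring to those references after a sketch (simplicity plus the $\mathrm{Ext}^{2}(E,E)_{0}$-vanishing for smoothness, the Riemann--Roch count $\chi(E,E)=\chi(\mc{O}_{\hat S})-2n$ for the dimension, and $p_{*}$ respectively the stabilization of $m$-stability for the two identifications) that is consistent with how those results are actually proved. So your treatment matches the paper's, with the sketch as a correct bonus.
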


\subsection{Moduli spaces as derived Grassmannians}
We first recall the cohomological properties of stable coherent sheaves
\begin{proposition}[Proposition 3.1 of \cite{kuhn2021blowup} and Lemma 3.4 of \cite{1050282810787470592}]
  \label{prop31}
  For a stable perverse coherent sheaf $E$ on $\hat{S}$, we have
  \begin{equation*}
    R^{i}p_{*}E(aC)=0, \quad a=0,1, \quad i\geq 1
  \end{equation*}
  and moreover
  \begin{equation*}
    Rp_{*}E(aC)=p_{*}E(aC), \quad a=0,1
  \end{equation*}
  are all stable coherent sheaves with respect to $H$.
\end{proposition}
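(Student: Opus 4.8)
The plan is to prove \cref{prop31} by pushing forward the two short exact sequences relating $E$, $E(C)$, and their restrictions to the exceptional curve $C$, and then combining Nakajima--Yoshioka's stability characterization with vanishing on the rational curve $C \cong \mb{P}^1$. First I would record the standard exact sequences on $\hat{S}$: tensoring $0\to \mc{O}_{\hat{S}}(-C)\to \mc{O}_{\hat{S}}\to \mc{O}_C\to 0$ with $E$ (and with $E(C)$) gives
\begin{align*}
  0\to E\to E(C)\to E(C)|_C\to 0, \qquad
  0\to E(-C)\to E\to E|_C\to 0 ,
\end{align*}
where one must first check that $E$ is locally free near $C$, or more precisely that these sequences stay exact; for a stable (hence torsion-free, in the rank-$1$ case $E$ is actually an ideal sheaf twist) sheaf this is where \cref{def:41} enters—the conditions $\mathrm{Hom}(E,\mc{O}_C(-1))=0$ and $\mathrm{Hom}(\mc{O}_C,E)=0$ force $E|_C$ and $E(C)|_C$ to be, on the $\mb{P}^1 = C$, direct sums of line bundles $\mc{O}_C(a)$ with controlled degrees (no $\mc{O}_C(-1)$ summand, no $\mc{O}_C$ summand mapping in), so that $H^1$ of these restrictions along the fibers of $p$ vanishes.

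Next I would apply $Rp_*$. Since $p$ contracts only $C$ to the point $o$, the only possible higher direct image is $R^1p_*$, supported at $o$, and it is computed by $H^1$ of the relevant sheaf restricted to $C$. The key inputs are: $R^1p_*\mc{O}_{\hat S}(aC) = 0$ for $a = 0, -1$ (this is classical: $p_*\mc{O}_{\hat S} = \mc{O}_S$, and $\mc{O}(-C)|_C = \mc{O}_C(1)$ has no $H^1$), combined with the vanishing $H^1(C, E(aC)|_C) = 0$ for $a = 0,1$ coming from the perverse-stability hypotheses as above. Chasing the long exact sequences of $Rp_*$ applied to the two displayed sequences, together with these vanishings, yields $R^ip_*E(aC)=0$ for $i\geq 1$ and $a=0,1$, which is the first assertion; and then $Rp_*E(aC) = p_*E(aC)$ is immediate since all higher direct images vanish and $p$ has cohomological dimension $1$.

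For the last assertion—that $p_*E(aC)$ is stable with respect to $H$ for $a=0,1$—I would invoke Nakajima--Yoshioka's definition directly: perverse stability of $E$ means precisely that $p_*E$ is Gieseker-stable, and then one checks $p_*E(C)$ is obtained from $p_*E$ by an elementary modification that preserves stability, or more cleanly, cite that in the rank-$1$ case stability is equivalent to torsion-freeness (as noted in the Remark after \cref{def:41}) and that $p_*E(C)$ is torsion-free because $E(C)$ is and $p_*$ of a torsion-free sheaf under a birational contraction stays torsion-free. I expect the main obstacle to be the local analysis near $C$: verifying that $E$ restricted to $C$ (and its twists) decomposes with the degree bounds forced by the vanishing conditions $\mathrm{Hom}(E,\mc{O}_C(-1)) = \mathrm{Hom}(\mc{O}_C,E) = 0$, so that the fiberwise $H^1$ really does vanish. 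This is exactly the content of the cited Proposition 3.1 of \cite{kuhn2021blowup} and Lemma 3.4 of \cite{1050282810787470592}, so in practice I would structure the proof to reduce to those statements rather than reprove the curve-restriction lemma from scratch.
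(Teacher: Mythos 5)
The paper does not actually prove \cref{prop31}: it is imported verbatim, with the proof deferred to Proposition 3.1 of Kuhn and Lemma 3.4 of Nakajima--Yoshioka, so there is no internal argument to compare yours against. Your sketch is a reasonable reconstruction of the standard argument and, to your credit, you recognize at the end that it reduces to exactly those cited statements; that matches how the paper treats the result. Two points of precision if you did want to write it out. First, ``check that $E$ is locally free near $C$'' is not the relevant condition for the sequence $0\to E(-C)\to E\to E|_{C}\to 0$ to be exact; what you need is that $E$ has no nonzero subsheaf supported on $C$, which is where $\mathrm{Hom}(\mc{O}_{C},E)=0$ (together with rank one / torsion-freeness of $p_{*}E$ from \cref{def:41}) enters. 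Second, the vanishing $H^{1}(C,E(aC)|_{C})=0$ on the reduced curve does not by itself give $R^{1}p_{*}E(aC)=0$: the theorem on formal functions requires control over all infinitesimal thickenings $nC$, and the clean way to close this gap is the duality used by Nakajima--Yoshioka identifying $R^{1}p_{*}F$ at $o$ with (the dual of) $\mathrm{Hom}(F,\mc{O}_{C}(-1))$, which converts the perverse-stability hypothesis directly into the vanishing of higher direct images rather than routing through a fiberwise $H^{1}$ computation. With those two repairs your outline is essentially the argument in the cited sources.
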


For any stable perverse coherent sheaf $E$ such that
\begin{equation*}
    rank(E)=1, \quad c_{1}(E)=-l[C], \quad c_{2}(E)=n,
  \end{equation*}
  by \cref{prop31}
  \begin{equation*}
    p_{*}E, \text{ and } p_{*}E(C)
  \end{equation*}
are all stable and by the Grothendieck-Riemann-Roch theorem we have
  \begin{align*}
    rank(p_{*}E)=1, \quad c_{1}(p_{*}E)=0, \quad c_{2}(p_{*}E)=(n+\frac{l+l^{2}}{2}).\\
    rank(p_{*}E(C))=1, \quad c_{1}(p_{*}E(C))=0, \quad c_{2}(p_{*}E(C))=(n+\frac{-l+l^{2}}{2}).
  \end{align*}
  It induces morphisms of moduli spaces
\begin{align*}
  \zeta: M^{0}(l,n)\to S^{[n+\frac{l+l^{2}}{2}]}, \quad E\to p_{*}(E) \\
  \eta: M^{0}(l,n)\to S^{[n+\frac{-l+l^{2}}{2}]}, \quad E\to p_{*}(E(C)).
\end{align*}

The morphisms $\zeta$ and $\eta$ are actually projection morphisms of derived Grassmannians of the universal ideal sheaves over Hilbert schemes of points on $S$: let
\begin{equation*}
\mc{I}_{m}\in Coh(S^{[m]}\times S)  
\end{equation*}
be the universal ideal sheaf of $S^{[m]}$ and we denote
\begin{equation*}
  \mc{I}_{m,o}:=\mc{I}_{m}|_{S^{[m]}\times o},\quad (\text{short for }\mc{I}_{o}).
\end{equation*}
By a folklore lemma (like Proposition 2.14 of \cite{neguct2018hecke}), $\mc{I}_{o}$ is of tor amplitude $[0,1]$.

\begin{theorem}[Theorem 4.1 of \cite{1050282810787470592}]
  \label{thm35}
  We have a canonical isomorphism
  \begin{equation*}
    M^{0}(l,n)\cong Gr_{S^{[n+\frac{-l+l^{2}}{2}]}}(\mc{I}_{o},l), \quad M^{0}(l,n)\cong Gr_{S^{[n+\frac{l+l^{2}}{2}]}}(\mc{I}_{o}^{\vee}[1],l),
  \end{equation*}
  where the projection morphisms are $\eta$ and $\zeta$ exactly. 
\end{theorem}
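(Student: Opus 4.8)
The plan is to match the two moduli functors directly, passing between sheaves on $\hat{S}$ and on $S$ by the adjunction $(Lp^{*},Rp_{*})$ together with \cref{prop31}, invoking the classification of $0$-stable perverse sheaves of Nakajima--Yoshioka for the bijection on points, and checking at the end that the derived structures agree. Let $\iota\colon C\hookrightarrow\hat{S}$ be the inclusion of the exceptional curve and let $\mc{E}$ be the universal $0$-stable perverse sheaf on $M^{0}(l,n)\times\hat{S}$. By \cref{prop31} the relative pushforwards $Rp_{*}\mc{E}$ and $Rp_{*}\mc{E}(C)$ are concentrated in degree $0$ and are flat families of torsion-free rank-$1$ sheaves with trivial first Chern class, hence ideal sheaves of relatively length-$m$, resp. length-$m'$, subschemes of $S$; Grothendieck--Riemann--Roch then forces $m=n+\tfrac{l^{2}-l}{2}$ and $m'=n+\tfrac{l^{2}+l}{2}$. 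This produces $\zeta$ and $\eta$ together with isomorphisms $\zeta^{*}\mc{I}_{m}\cong Rp_{*}\mc{E}$ and $\eta^{*}\mc{I}_{m'}\cong Rp_{*}\mc{E}(C)$.

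The core step is to recover $\mc{E}$ from $\zeta^{*}\mc{I}_{m}$ together with boundary data along $C$. The counit $Lp^{*}Rp_{*}\mc{E}\to\mc{E}$ has a cone $\mc{G}$ with $Rp_{*}\mc{G}=0$, so $\mc{G}$ lies in the subcategory annihilated by $Rp_{*}$, which fibrewise over $M^{0}(l,n)$ is generated by $\mc{O}_{C}(-1)$. Imposing the two perverse vanishings $\mathrm{Hom}(E,\mc{O}_{C}(-1))=\mathrm{Hom}(\mc{O}_{C},E)=0$ on the whole family, I would show that $\mc{G}$ is $\iota_{*}$ of a shift of a rank-$l$ locally free sheaf $\mc{W}$, and that the connecting morphism, rewritten through the $(\iota^{*},\iota_{*})$-adjunction and the identification of the restriction of $Lp^{*}\zeta^{*}\mc{I}_{m}$ to $C$ with the pullback of $\zeta^{*}\mc{I}_{o}$, is exactly a surjection $\zeta^{*}\mc{I}_{o}\twoheadrightarrow\mc{W}$ onto a rank-$l$ locally free sheaf --- that is, a point of Jiang's derived Grassmannian $Gr_{S^{[m]}}(\mc{I}_{o},l)$. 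The fact that such a pair $\bigl(Z,\ \mc{I}_{o}\twoheadrightarrow W\bigr)$ conversely determines a $0$-stable perverse sheaf, bijectively on $\mathbb{C}$-points, is exactly Theorem 4.1 of \cite{1050282810787470592} (see also \cite{koseki2021categorical}); the universal property of $Gr_{S^{[m]}}(\mc{I}_{o},l)$ then promotes the construction above to a morphism $M^{0}(l,n)\to Gr_{S^{[m]}}(\mc{I}_{o},l)$ over $S^{[m]}$.

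For the inverse morphism one runs the construction in reverse over $Gr_{S^{[m]}}(\mc{I}_{o},l)$: from the universal quotient $pr^{*}\mc{I}_{o}\twoheadrightarrow\mc{V}_{l}$ and the universal ideal sheaf $\mc{I}_{m}$ one assembles the corresponding connecting map along $C$ and takes its fiber to obtain a complex $\widetilde{\mc{E}}$ on $Gr_{S^{[m]}}(\mc{I}_{o},l)\times\hat{S}$; by \cref{prop31} and Grothendieck--Riemann--Roch it is a flat family of $0$-stable perverse sheaves of Chern character $[\hat{S}]-l[C]+(-n-\tfrac{l^{2}}{2})[pt]$, and it is inverse to the previous morphism on $\mathbb{C}$-points by construction. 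To see that this is an isomorphism of possibly-derived schemes, recall that $M^{0}(l,n)$ is a smooth projective variety of dimension $2n$ by \cite{koseki2021categorical,1050282810787470592}, while $Gr_{S^{[m]}}(\mc{I}_{o},l)$ is quasi-smooth over $S^{[m]}$ of relative virtual dimension $l(1-l)$, hence of virtual dimension $2m+l-l^{2}=2n$ equal to the dimension of its classical truncation; so it is classical and the identification is one of smooth varieties. The second isomorphism $M^{0}(l,n)\cong Gr_{S^{[m']}}(\mc{I}_{o}^{\vee}[1],l)$ follows by the same argument applied to $\mc{E}(C)$, using $Rp_{*}\mc{E}(C)\cong\eta^{*}\mc{I}_{m'}$, the perverse vanishing $\mathrm{Hom}(\mc{O}_{C},E)=0$, and the corresponding dual boundary data, which this time yields a rank-$l$ locally free quotient of the shifted dual $\mc{I}_{o}^{\vee}[1]$ rather than of $\mc{I}_{o}$.

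The step I expect to be the main obstacle is the reconstruction carried out \emph{in families}: that $\mc{W}$ is locally free of constant rank $l$ over all of $M^{0}(l,n)$, that the two perverse stability conditions translate uniformly into being a ``generating quotient'' in precisely the sense of \cite{jiang2022grassmanian}, and that $\widetilde{\mc{E}}$ is flat over $Gr_{S^{[m]}}(\mc{I}_{o},l)$. The sheaf-by-sheaf content of this is supplied by Nakajima--Yoshioka; what has to be added here is that the functor-of-points identification is compatible with the derived structure, for which the virtual-dimension bookkeeping in the previous paragraph is the essential input.
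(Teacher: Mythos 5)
The paper does not prove this statement at all: it is imported verbatim as Theorem 4.1 of Nakajima--Yoshioka \cite{1050282810787470592}, so there is no in-paper argument to compare yours against. Your sketch is a faithful reconstruction of how that cited result is actually established --- push forward along $p$ using \cref{prop31} and Grothendieck--Riemann--Roch to land in $S^{[n+\frac{-l+l^{2}}{2}]}$ and $S^{[n+\frac{l+l^{2}}{2}]}$, then recover $E$ from $p_{*}E$ together with rank-$l$ boundary data along $C$ extracted from the cone of the counit, with the two perverse vanishings guaranteeing that this data is exactly a locally free quotient of $\mc{I}_{o}$ (resp.\ of $\mc{I}_{o}^{\vee}[1]$) --- and the part you add beyond Nakajima--Yoshioka, namely that the derived Grassmannian carries no extra derived structure because its virtual dimension $2m+l(1-l)=2n$ (resp.\ $2m'-l(1+l)=2n$) equals the dimension of the smooth classical moduli space, is precisely the bookkeeping the paper handles elsewhere via the remark that a quasi-smooth derived scheme with $vdim(X)=\dim(\pi_{0}(X))$ is classical (and, for the incidence varieties, defers to the companion paper \cite{zhao2024}). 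The one blemish is that at the pivotal step you invoke ``Theorem 4.1 of \cite{1050282810787470592}'' for the pointwise bijection --- i.e.\ you cite the statement being proved --- but since the paper itself treats the whole theorem as an external citation, this is consistent with its use here rather than a genuine gap; if you wanted a self-contained proof you would have to actually carry out the in-families reconstruction you correctly flag as the main obstacle.
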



\section{The admissible $\mc{E}$-representation on perverse coherent sheaves}

Given a smooth projective surface $S$ and a closed point $o\in S$ and a cohomology theory $\mb{H}$, we consider the grading which we call as the $l$-grading
\begin{equation*}
  \mb{H}^{S,o}_{l}:=\bigoplus_{n\in \mb{Z}}\mb{H}^{*}(M^{0}(l,n),\mb{Q}), \quad \mb{H}^{S,o}:=\bigoplus_{l\in \mb{Z}}\mb{H}^{S,o}_{l}.
\end{equation*}
When $l=0$, we have
\begin{equation*}
  \mb{H}_{0}^{S,o}\cong \bigoplus_{n\in \mb{Z}}\mb{H}^{*}(S^{[n]},\mb{Q}).
\end{equation*}

Other than the $l$-grading, we also have two grading: the $q$-degree by associating $\mb{H}^{*}(M^{0}(l,n),\mb{Q})$ with degree $n$, the $t$-degree (or bi-degree) which is the cohomology degree. We denote
\begin{equation*}
  1_{\mb{H}}:=
  \begin{cases}
    0, \text{ if } \mb{H} \text{ is the Hochschild homology group}, \\
    1, \text{ if } \mb{H} \text{ is the Chow group or Borel-Moore homology group}, \\
    (1,1)\text{ if } \mb{H} \text{ is the Hodge cohomology group.} \\
    \text{Undefined} \text{ if } \mb{H} \text{ is the Grothendieck group of coherent sheaves.}
  \end{cases}
\end{equation*}

In this section, we prove that
\begin{theorem}[\cref{thm:15}]
  \label{lastmain}
  There is a $\mc{E}$-representation on $\mb{H}^{S,o}$ such that
\begin{enumerate}
\item Under the $l$-grading, the representation is graded admissible;
\item the $q$-degree of $E,F$ are preserved, and the $q$-degree of $K_{\pm 1}$ are $\pm l$ on $\mb{H}^{S,o}_{l}$
\item the $t$-degree of $E,F$ are preserved, and the $t$-degree $K_{\pm 1}$ is $\pm l\cdot 1_{\mb{H}}$ on $\mb{H}^{S,o}_{l}$.
\item the stable limit
  \begin{equation*}
    \mb{H}_{\infty}^{S,o}\cong \bigoplus_{n\in \mb{Z}}\mb{H}^{*}(\hat{S}^{[n]},\mb{Q}).
  \end{equation*}
\item the operators are given by geometric correspondences.
\end{enumerate}
\end{theorem}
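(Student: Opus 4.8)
The plan is to exhibit $\mb{H}^{S,o}=\bigoplus_{l,n}\mb{H}^{*}(M^{0}(l,n),\mb{Q})$ as the cohomology of a family of derived Grassmannians and to read off the operators $E,F,K_{\pm1}$ from incidence varieties. First I would invoke \cref{thm35}: for the universal complex $\mc{I}_{o}$, which is Tor-amplitude $[0,1]$ of rank $1$, each $M^{0}(l,n)$ is identified \emph{both} with $Gr(\mc{I}_{o},l)$ over $S^{[n+\binom{l}{2}]}$ and with $Gr(\mc{I}_{o}^{\vee}[1],l)$ over $S^{[n+\binom{l+1}{2}]}$; by the smoothness lemma of Koseki and Nakajima--Yoshioka quoted above, all of these Grassmannians are classical smooth projective varieties (or empty), so the standing hypotheses of \cref{prop:3155} are met over every base $S^{[m]}$. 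I would then \emph{define} $E$, $F$, $K_{1}$, $K_{-1}$ on $\mb{H}^{S,o}$ to be the direct sums over $l$ of the cohomological correspondences $\phi_{l-1,l}$, $\psi_{l,l-1}$, $\phi_{l,l}$, $\psi_{l,l}$ of \cref{sec:2.2} and \cref{prop:3155} attached to the incidence varieties $Inc_{S^{[m]}}(\mc{I}_{o},l,l-1)$ and $Inc_{S^{[m]}}(\mc{I}_{o},l,l)$; item~(5) then holds by construction, these being the cohomological correspondences of the (virtual) fundamental classes of incidence varieties.

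The heart of the proof is a bookkeeping step: evaluated on any single summand $\mb{H}^{*}(M^{0}(l,n),\mb{Q})$, each of the four operators and each of the relations \cref{eq:EF} only involves derived Grassmannians over a \emph{single} base $S^{[m]}$. This works out because the two identifications of $M^{0}(l,n)$ in \cref{thm35} use bases differing by exactly $\binom{l+1}{2}-\binom{l}{2}=l$, which matches the incidence-variety set-up: over a fixed $S^{[m]}$ the pair $\{Gr(\mc{I}_{o},l),Gr(\mc{I}_{o}^{\vee}[1],l)\}$ is $\{M^{0}(l,n),M^{0}(l,n-l)\}$, and the pair $\{Gr(\mc{I}_{o},l),Gr(\mc{I}_{o}^{\vee}[1],l-1)\}$ is $\{M^{0}(l,n),M^{0}(l-1,n)\}$. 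Reading this off gives that $E,F$ preserve the $q$-degree while $K_{\pm1}$ shift it by $\pm l$, i.e.\ item~(2); and each of $K_{-1}K_{1}=FE=1$, $K_{-1}E=FK_{1}=0$, $K_{1}K_{-1}+EF=1$ then reduces, summand by summand, to a single application of \cref{prop:3155} over the appropriate $S^{[m]}$ (equivalently, to the triangle and orthogonality relations of \cref{thm1} transported to cohomology via Caldararu's Mukai-vector formalism, with $\mc{E}=\mc{I}_{o}$ and $d=l$). Item~(3) is then the degree-shift half of \cref{prop:3155} read off verbatim.

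Admissibility (item~(1)) is cheap. On $W_{0}=\mb{H}_{0}^{S,o}$ we are in degree $l=0$; since $Gr_{X}(\mc{E},0)\cong X$, the incidence variety $Inc_{S^{[n]}}(\mc{I}_{o},0,0)$ is $S^{[n]}$ with both projections the identity, so $K_{1}|_{W_{0}}=K_{-1}|_{W_{0}}=\mathrm{id}$. For homogeneous $x\in\mb{H}^{*}(M^{0}(l,n),\mb{Q})$ with $l>0$, $K_{-1}^{N}x$ lies in $\mb{H}^{*}(M^{0}(l,n-Nl),\mb{Q})$, which vanishes once $Nl>n$ because $M^{0}(l,n-Nl)$ is then empty. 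Hence $\mb{H}^{S,o}$ is a graded admissible $\mc{E}$-module in the sense of \cref{def:ad1}, and in particular $E$ is injective by \cref{lem17}, so the stable limit makes sense as a colimit along injections.

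The step I expect to be the real obstacle is the identification of the stable limit (item~(4)). By definition $\mb{H}_{\infty}^{S,o}=\lim_{l\to\infty}\mb{H}_{l}^{S,o}$ along $E$, so in each $q$-degree $n$ it is the direct limit of $\mb{H}^{*}(M^{0}(l,n),\mb{Q})$ along $\phi_{l-1,l}$. The key point is that over $S^{[m]}$ with $m=n+\binom{l}{2}$ one has $Gr_{S^{[m]}}(\mc{I}_{o}^{\vee}[1],l)=M^{0}(l,n-l)=\varnothing$ as soon as $l>n$ (its expected dimension $2(n-l)$ is negative); in that range Jiang's length-two semiorthogonal decomposition \cref{thm1} collapses, $\mathrm{Im}(\Phi_{l,l})=0$, so $\Phi_{l,l-1}$ becomes an equivalence and $\Phi_{l,l-1*}$ an isomorphism on $\mb{H}^{*}$. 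Since the virtual Riemann--Roch correction is of strictly higher cohomological degree than its leading term $\phi_{l-1,l}$ (as in the proof of \cref{prop:3155}), invertibility of $\Phi_{l,l-1*}$ relative to the bounded cohomological grading forces $\phi_{l-1,l}$ itself to be an isomorphism for all $l>n$ (for Hochschild homology the two simply coincide). As $M^{0}(l,n)\cong\hat{S}^{[n]}$ for $l\gg0$, the direct limit equals $\mb{H}^{*}(\hat{S}^{[n]},\mb{Q})$, and summing over $n$ gives item~(4). The genuinely nontrivial input is precisely this collapse of the decomposition once one of the two base Grassmannians empties out past the last wall --- everything else is organizational. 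Finally, feeding the admissible $\mc{E}$-module just constructed into the categorical Boson--Fermion correspondence \cref{fermion-boson}, whose core here is $W_{0}=\bigoplus_{n}\mb{H}^{*}(S^{[n]},\mb{Q})$, yields the main theorem \cref{thm:main}.
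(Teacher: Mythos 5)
Your proposal is correct and follows the same route as the paper: define $E,F,K_{\pm1}$ as direct sums of the incidence-variety correspondences $\phi_{l-1,l},\psi_{l-1,l},\phi_{l,l},\psi_{l,l}$, deduce the relations \cref{eq:EF} and the degree statements summand by summand from \cref{prop:3155} over each base $S^{[m]}$, and obtain admissibility from the emptiness of $M^{0}(l,n)$ for $n<0$. You additionally spell out item (4) --- that $\phi_{l-1,l}$ is eventually an isomorphism in each $q$-degree, which can be seen even more directly from the already-established relations since $EF=1-K_{1}K_{-1}=1$ on $\mb{H}^{*}(M^{0}(l,n),\mb{Q})$ once $M^{0}(l,n-l)=\emptyset$ --- a point the paper's own (very terse) proof leaves implicit.
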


Once we have the above graded admissible representation, by the categorical Boson-Fermion correspondence theorem (i.e. \cref{fermion-boson}), we have
\begin{theorem}[\cref{thm2}]
  \label{thm62}
  We have an identification 
  \begin{equation*}
    \mb{H}^{S,o}\cong \mb{H}_{0}^{S,o}\otimes_{\mb{Q}} F.
  \end{equation*}
  and the action of the infinite-dimensional Clifford algebra on $F$ is given by geometric correspondences. The $t$-degree of a monomial in $F$:
  \begin{equation*}
    x_{i_{1}}\wedge \cdots x_{i_{n}}
  \end{equation*}
  is $(i_{1}+\cdots +i_{n}-\frac{n^{2}+n}{2})\cdot 1_{\mb{H}}$ and the $q$-degree is $i_{1}+\cdots +i_{n}-\frac{n^{2}+n}{2}$.
\end{theorem}
\begin{theorem}[\cref{thm:main}]
  \label{thm63}
  Let $\mathbb{H}^{*}$ be the functor of singular cohomology groups, Hochschild homology groups, or the Chow groups. We have an identification
  \begin{equation}
    \mb{H}_{\infty}^{S,o}\cong \mb{H}_{0}^{S,o}\otimes_{\mb{Q}}B.
  \end{equation}
  In particular, the Heisenberg algebra action on $B$ is induced by geometric correspondences. For a monomial $y_{\lambda}$ in the Bosonic Fock space, its $t$-degree is $|\lambda|\cdot 1_{\mb{H}}$ and the $q$-degree is $|\lambda|$.
\end{theorem}  

\subsection{The geometric correspondences}

We recall that over $S^{[n+\frac{-l+l^{2}}{2}]}$, by \cref{thm35} the derived Grassmannian of $\mc{I}_{o}$ induces
\begin{equation*}
  Gr(\mc{I}_{o}^{\vee}[1],l)\cong M^{0}(l,n-l), \quad Gr(\mc{I}_{o}^{\vee}[1],l-1)\cong M^{0}(l-1,n), \quad Gr(\mc{I}_{o},l)\cong M^{0}(l,n).
\end{equation*}
Thus the operators
\begin{equation*}
  \psi_{l,l-1}\quad \phi_{l,l-1}\quad \psi_{l,l}\quad \phi_{l,l}
\end{equation*}
acts on
\begin{align*}
  \psi_{l,l-1}:\bigoplus_{n\geq 0}\mb{H}^{*}(M^{0}(l,n),\mb{Q})\to\bigoplus_{n\geq 0} \mb{H}^{*}(M^{0}(l-1,n),\mb{Q}), \\
  \phi_{l-1,l}:\bigoplus_{n\geq 0}\mb{H}^{*}(M^{0}(l-1,n),\mb{Q})\to \bigoplus_{n\geq 0}\mb{H}^{*}(M^{0}(l,n),\mb{Q}), \\
  \psi_{l,l}:\bigoplus_{n\geq 0}\mb{H}^{*}(M^{0}(l,n),\mb{Q})\to \bigoplus_{n\geq 0}\mb{H}^{*}(M^{0}(l,n-l),\mb{Q}), \\
  \phi_{l,l-1}:\bigoplus_{n\geq 0}\mb{H}^{*}(M^{0}(l,n-l),\mb{Q})\to \bigoplus_{n\geq 0}\mb{H}^{*}(M^{0}(l,n),\mb{Q}).
\end{align*}
We define the operators
\begin{align}
  \label{eq31}
  E:=\bigoplus_{l\geq 0}\phi_{l-1,l}, \quad  F:=\bigoplus_{l\geq 0}\psi_{l-1,l}, \\
  K_{1}:=\bigoplus_{l\geq 0}\phi_{l,l}, \quad  K_{-1}:=\bigoplus_{l\geq 0}\psi_{l,l}. \nonumber
\end{align}
Then they all act on $\mb{H}_{l,n}^{S,o}$ (for the Grothendieck  Grothendieck or Hochschild homology, we replace it by $\Psi_{*,*}$ and $\Phi_{*,*}$ or $[\Psi_{*,*}]$ and $[\Phi_{*,*}]$ respectively).
\begin{proof}[Proof of \cref{lastmain}]
  By \cref{prop:3155}, operators in \cref{eq31} forms an $\mc{E}$-representation by geometric correspondences and the $q,t$-degree matches the \cref{lastmain}. The admissibility follows from the fact that
  \begin{equation*}
    M(l,n)=\emptyset
  \end{equation*}
  when $n<0$.
\end{proof}

\appendix
\section{Some algebra computations}
In this section, we give the proof of  \cref{prop12},  \cref{prop125},  \cref{224} and \cref{prop:228}, which involve explicit computations.
\subsection{The proof of \cref{prop12}}\label{A1} We recall the definition of the $P_{i}, Q_{i}$ for $i>0$ in the algebra $\mc{E}$:

\begin{definition}
  We define elements $P_{i},Q_{i}$ in $\mc{E}$ for any $i>0$ such that $P_{1}=EK_{-1}, Q_{1}=K_{1}F$ and
  \begin{align}
    \label{Cli1}   P_{i}=K_{1}P_{i-1}K_{-1}-EP_{i-1}F, \quad Q_{i}=K_{1}Q_{i-1}K_{-1}-EQ_{i-1}F ,\quad \text{ if } i>1.
  \end{align}
\end{definition} 
\begin{proof}[Proof of \cref{prop12}]

  We denote
  \begin{equation*}
    P(z):=\sum_{i=1}^{\infty}P_{i}z^{i}, \quad Q(z):=\sum_{i=1}^{\infty}Q_{i}z^{i}.
  \end{equation*}
  Then $P(z),Q(z)$ are decided by $P_{1},Q_{1}$ and the equation
  \begin{align}
    \label{for}
    P(z)=z(P_{1}+K_{1}P(z)K_{-1}-EP(z)F) \\
    Q(z)=z(Q_{1}+K_{1}Q(z)K_{-1}-EQ(z)F).
  \end{align}
  We need to prove that
  \begin{align*}
    P(z)P(w)+P(w)P(z)=0, \\ Q(z)Q(w)+Q(w)Q(z)=0, \\ P(z)Q(w)+Q(w)P(z)=\frac{zw}{1-zw}.
  \end{align*}
  
By \cref{for}, we have
\begin{align}
  \label{eqabc}  
  P(z)K_{1}=z(E+K_{1}P(z)) \quad K_{-1}P(z)=zP(z)K_{-1}, \\
    Q(z)K_{1}=zK_{1}Q(z), \quad K_{-1}Q(z)=z(F+K_{-1}Q(z)), \nonumber
  \end{align}
  and 
\begin{align}
  \label{eqqq}
  P(z)E=-zEP(z), \quad FP(z)=z(K_{-1}-P(z)F),  \\
  Q(z)E=z(K_{1}-EQ(z)), \quad FQ(z)=-zQ(z)F. \nonumber
\end{align}
Moreover, we have
\begin{align*}
 & P_{1}P(z)=EK_{-1}P(z)=zEP(z)K_{-1}=-P(z)P_{1}, \\
 & P_{1}Q(z)=EK_{-1}Q(z)=zE(F+Q(z)K_{-1})\\
 & =-Q(z)P_{1}+z(EF+K_{1}K_{1})=-Q(z)P_{1}+z.
\end{align*}
and similarly
\begin{align*}
  Q_{1}P(z)=-P(z)Q_{1}+z, \quad Q_{1}Q(z)=-Q(z)Q_{1}.
\end{align*}

Hence we have
\begin{align}
  \label{eqqqw}
  P(z)P(w)=zw(P_{1}+K_{1}P(z)K_{-1}-EP(z)F)(P_{1}+K_{1}P(w)K_{-1}-EP(w)F)
 \\
  =zP_{1}P(w)+wP(z)P_{1}-zwP_{1}^{2}+zw(K_{-1}P(z)P(w)K_{1}+EP(z)P(w)F).\nonumber
\end{align}
We notice that $P_{1}^{2}=EK_{-1}EK_{1}=0$. Thus let $\frak{P}=P(z)P(w)+P(w)P(z)$. We have
\begin{align*}
  \frak{P}=z(P_{1}P(w)+P(w)P(z))+w(P_{1}P(z)+P(z)P_{1})+zw(K_{-1}\frak{P}K_{1}+F\frak{P}E) \\
  =zw(K_{-1}\frak{P}K_{1}+F\frak{P}E)
\end{align*}
By the boundary condition, the only solution of
\begin{align*}
  \frak{P}=zw(K_{-1}\frak{P}K_{1}+F\frak{P}E)
\end{align*}
is $0$ and thus
\begin{align*}
  P(z)P(w)=P(w)P(z).
\end{align*}
Similarly, we have
\begin{align*}
  Q(z)Q(w)=Q(w)Q(z).
\end{align*}

Similar to \cref{eqqqw}, we have
\begin{align*}
  P(z)Q(w)=zP_{1}Q(w)+wP(z)Q_{1}-zwP_{1}Q_{1}+zw(K_{-1}P(z)Q(w)K_{1}+EP_{-1}(z)P(w)F), \\
  Q(w)P(z)=wQ_{1}P(z)+zQ(w)P_{1}-zwQ_{1}P_{1}+zw(K_{-1}Q(z)P(w)K_{1}+EQ(z)P(w)F).
\end{align*}
We notice that
\begin{equation*}
  P_{1}Q_{1}=EF, \quad Q_{1}P_{1}=K_{-1}K
\end{equation*}
and thus $P_{1}Q_{1}+Q_{1}P_{1}=1$. Hence let $\frak{Q}:=P(z)Q(w)+Q(w)P(z)$ and we have
\begin{align*}
  \frak{Q}=zw(1+K_{-1}\frak{Q}K_{1}+E\frak{Q}F)
\end{align*}
and hence
\begin{align*}
 (\frak{Q}-\frac{zw}{1-zw})=zw(K_{-1}(\frak{Q}-\frac{zw}{1-zw})K_{1}+E(\frak{Q}-\frac{zw}{1-zw})F)
\end{align*}
and hence
\begin{align*}
  P(z)Q(w)+Q(w)P(z)=\frac{zw}{1-zw}.
\end{align*}

\end{proof}

\subsection{The $\mc{E}$-representation on a graded admissible $\mc{C}$-module}
\label{125}
Let $W$ be a graded admissible $\mc{C}$-module. We formally define
\begin{equation*}
  P(z):=\sum_{i=1}^{\infty}P_{i}z^{i}, \quad Q(z):=\sum_{i=1}^{\infty}Q_{i}z^{i}
\end{equation*}
For any operator $T$ on $W$, we consider 
\begin{align*}
  \Gamma(T)(z):=\sum_{i\in \mb{Z}}\Gamma_{i}(T)z^{i}
\end{align*}
such that
\begin{align*}
  \Gamma_{i}(T)=\sum_{\substack{m-n=i\\ m,n\geq 1}}P_{m}TQ_{n}
\end{align*}
which is well-defined due to the admissible property. We also formally written
\begin{equation*}
  \Gamma(T)(z):=P(z)TQ(z^{-1}).
\end{equation*}
We denote
\begin{equation*}
  \Lambda:=\Gamma_{0}(id).
\end{equation*}
\begin{lemma}
  \label{lemA4}
  We have $\Lambda|_{W_{i}}=i\cdot id$, which is an invertible operator when $i>0$.
\end{lemma}
To prove \cref{lemA4}, we first prove that 
\begin{lemma}
  \label{coreq}\label{lem:comm}
  Let $\alpha$ and $\beta$ be two degree $0$ operators on $W$. Then $\alpha=\beta$ if 
  \begin{equation*}
    \beta|_{W_{0}}=\alpha|_{W_{0}}\quad \text{ and } \quad  [Q_{l},\beta]=[Q_{l},\alpha] \text{ for all } l>0.
  \end{equation*}
\end{lemma}
\begin{proof}
  We prove that $\alpha-\beta=0$ on all $W_i$ by the induction on $i$. Assuming that $\alpha-\beta|_{W_{i}}=0$, then on $W_{i+1}$,
  \begin{equation*}
    Q_{l}(\alpha-\beta)|_{W_{i+1}}=(\alpha-\beta)Q_l|_{W_i}=0 \text{ for all } l>0.
  \end{equation*}
  Hence $\alpha-\beta=0$ on $W_{i+1}$ by the admissible property of $W$.
\end{proof}
\begin{proof}[Proof of \cref{lemA4}]
  We notice that
    \begin{align*}
      Q_{l}\Lambda=-\sum_{i=1}^{\infty}P_{i}Q_{l}Q_{i}+Q_{l}=\Lambda Q_{l}+ Q_{l}
    \end{align*}
    and thus $\Lambda$ and $\oplus_{i\geq 0}i\cdot id$ all satisfy the equation
    \begin{equation*}
      [Q_{l},*]=Q_{l}
    \end{equation*}
    for all $l>0$. Hence they are the same by \cref{coreq}. 
  \end{proof}
By induction there are unique operators $K_{1}$, $K_{-1}$ on $W$ satisfy the following relations
\begin{enumerate}
\item $K_{1},K_{-1}$ are identity on $W_{0}$;
\item we have
  \begin{equation*}
    \Lambda K_{1}=\Gamma_{1}(K_{1}), \quad \Lambda K_{-1}=\Gamma_{-1}(K_{-1}).
  \end{equation*}
\end{enumerate}
We also define
\begin{equation*}
  E:=P_{1}K_{1}, \quad F:=K_{-1}Q_{1}.
\end{equation*}
\begin{proposition}
  \label{prop125}
  The operators $K_{1},K_{-1},E,F$ induce a graded admissible $\mc{E}$-module on $W$.
\end{proposition}
To prove \cref{prop125}, we first prove the following lemmas

\begin{lemma}
  \label{lem128}\label{lem129}
  The operators $K_{1},K_{-1},E,F$ satisfy the equation \cref{eqabc}, i.e. 
    \begin{align*}
    Q_{j}K_{1}=K_{1}Q_{j-1}, \quad K_{-1}P_j=P_{j-1}K_{-1}, \text{ if } j>0 \\
    P_{j}K_{1}=K_{1}P_{j-1}, \quad K_{-1}Q_{j}=Q_{j-1}K_{-1}, \text{ if } j>1,
  \end{align*}
  where we denote $P_{0}=Q_{0}=0$.
  \end{lemma} 
\begin{proof}
  We only prove $Q_{j}K_{1}=K_{1}Q_{j-1}$ and other computations follow from the same method.  We notice that 
  \begin{align*}
   (\Lambda+id)Q_{j}K_{1}=Q_{j}\Lambda K_{1}=\sum_{l=1}^{\infty}Q_{j}P_{l+1}K_{1}Q_{l}=-\sum_{l=1}^{\infty}P_{l+1}(Q_{j}K_{1})Q_{l}+K_{1}Q_{j-1} \\
    \Lambda K_{1}Q_{j-1}=\sum_{l=1}^{\infty}P_{l+1}K_{1}Q_{l}Q_{j-1}=-\sum_{l=1}^{\infty}P_{l+1}(K_{1}Q_{j-1})Q_{l}
  \end{align*}
  Thus $\alpha:=Q_{j}K_{1}-K_{1}Q_{j-1}$ satisfy the equation that
  \begin{equation}
    \label{eq:alpha}
    (\Lambda+id)\alpha=-\Gamma_{1}(\alpha).
  \end{equation}
  By induction, the solutions of \cref{eq:alpha} is decided by $\alpha|_{W_{0}}$, and hence $\alpha=0$ follows from $\alpha|_{W_{0}}=0$.
\end{proof}

\begin{corollary}
  For any $j>0$ (resp. $j>1$), we have
  \begin{equation*}
    Q_{j}K_{-1}K_{1}=K_{-1}K_{1}Q_{j}, \quad  (\text{resp. }Q_{j}K_{1}K_{-1}=K_{1}K_{-1}Q_{j}),
  \end{equation*}
  i.e.
  \begin{equation}
    \label{eq:comment}
    [Q(z),K_{-1}K_{1}]=0, \quad [\frac{d}{dz}Q(z),K_{1}K_{-1}]=0.
  \end{equation}
\end{corollary}
\begin{proof}
    It directly follows from \cref{lem128}.
  \end{proof}

  \begin{corollary}
      \label{lem130}  \label{lem132}
    On $W$ we have
  \begin{equation*}
    K_{1}K_{-1}=Q_{1}P_{1},\quad K_{-1}K_{1}=id.
  \end{equation*}
   
\end{corollary}
\begin{proof}
  The second formula follows from \cref{eq:comment} and \cref{lem:comm}. For the first formula, let $\beta:=K_{1}K_{-1}-Q_{1}P_{1}$. We have
  \begin{align*}
    \beta|_{W_{0}}=0, \quad Q_{1}\beta=0, \text{ and }\quad [Q_{l},\beta]=0 \text{ for all } l>1.
  \end{align*}
  Then by induction and the similar method with \cref{lem:comm}, we also have $\beta=0$.
\end{proof}

\begin{proof}[Proof of \cref{prop125}]
  The formula $K_{-1}K_{1}=id$ follows from \cref{lem132}. We have
  \begin{align*}
    FE=K_{-1}P_{1}Q_{1}K_{1}=K_{-1}K_{1}K_{-1}K_{1}=id
  \end{align*}
  by \cref{lem130}. Also by \cref{lem130},
  \begin{equation*}
    K_{1}K_{-1}+EF=Q_{1}P_{1}+P_{1}Q_{1}P_{1}Q_{1}=Q_{1}P_{1}+P_{1}Q_{1}-P_{1}^{2}Q_{1}^{2}=id.
  \end{equation*}
  Finally, we have
  \begin{equation*}
    FK_{1}=K_{-1}P_{1}K_{1}=0, \quad K_{-1}E=K_{-1}Q_{1}K_{1}=0
  \end{equation*}
  by \cref{lem128}.
\end{proof}

\subsection{The proof of \cref{224}} \label{sec:A2}
Let $H_{\pm k}$, $k\in \mb{Z}-\{0\}$ be the generators of the infinite-dimensional Heisenberg algebra.

Let $W_{\infty}$ be an admissible $\mc{H}$-representation, with the filtration
\begin{equation*}
  W_{i}:=\bigcap_{l> i}ker(H_{l}).
\end{equation*} 
and the operators for every $k>0$
\begin{equation*}
  \mc{L}_{k}:=-\sum_{i=1}^{\infty}\frac{1}{k^{i}i!}H_{k}^{i-1}H_{-k}^{i}.
\end{equation*}

\begin{lemma}
  We have
  \begin{equation}
    \label{A6}
    \mc{L}_{k}H_{k}=id, \quad H_{-k}(1-H_{k}\mc{L}_{k})=0.
  \end{equation}
\end{lemma}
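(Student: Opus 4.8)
The plan is to reduce both identities in \cref{A6} to a single auxiliary operator and then telescope. First I would introduce
\begin{equation*}
  \Phi:=\sum_{i\geq 0}\frac{1}{k^{i}i!}H_{k}^{i}H_{-k}^{i},
\end{equation*}
with the $i=0$ term understood to be $\mathrm{id}$. On an admissible $\mc{H}$-module the operator $H_{-k}$ is locally nilpotent: for every $x$ there is an $N$ with $H_{-k}^{m}x=0$ once $km>N$, this being the admissibility condition applied to the sequence $\lambda$ with $\lambda_{k}=m$ and all other entries zero. Hence $\mc{L}_{k}$, $\Phi$, and every infinite sum below acts on each vector as a finite sum and is well defined. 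The first reduction is then immediate: term by term $H_{k}\cdot\frac{1}{k^{i}i!}H_{k}^{i-1}H_{-k}^{i}=\frac{1}{k^{i}i!}H_{k}^{i}H_{-k}^{i}$, so
\begin{equation*}
  H_{k}\mc{L}_{k}=-\sum_{i\geq 1}\frac{1}{k^{i}i!}H_{k}^{i}H_{-k}^{i}=\mathrm{id}-\Phi,
\end{equation*}
and therefore $1-H_{k}\mc{L}_{k}=\Phi$. Thus the second identity of \cref{A6} is equivalent to the assertion $H_{-k}\Phi=0$.

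Next I would prove $H_{-k}\Phi=0$ by telescoping. Applying \cref{A5} in the form $H_{-k}H_{k}^{i}=H_{k}^{i}H_{-k}-ikH_{k}^{i-1}$, multiplying on the right by $H_{-k}^{i}$ and dividing by $k^{i}i!$, then summing over $i\geq 0$ gives
\begin{equation*}
  H_{-k}\Phi=\sum_{i\geq 0}\frac{1}{k^{i}i!}H_{k}^{i}H_{-k}^{i+1}-\sum_{i\geq 1}\frac{ik}{k^{i}i!}H_{k}^{i-1}H_{-k}^{i}.
\end{equation*}
In the second sum one has $\frac{ik}{k^{i}i!}=\frac{1}{k^{i-1}(i-1)!}$, and reindexing $i\mapsto i+1$ turns it into $\sum_{i\geq 0}\frac{1}{k^{i}i!}H_{k}^{i}H_{-k}^{i+1}$, which is exactly the first sum; hence $H_{-k}\Phi=0$, proving the second identity.

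Finally, the identity $\mc{L}_{k}H_{k}=\mathrm{id}$ follows by the same device, now using \cref{A5} in the form $H_{-k}^{i}H_{k}=H_{k}H_{-k}^{i}-ikH_{-k}^{i-1}$:
\begin{equation*}
  \mc{L}_{k}H_{k}=-\sum_{i\geq 1}\frac{1}{k^{i}i!}H_{k}^{i-1}\bigl(H_{k}H_{-k}^{i}-ikH_{-k}^{i-1}\bigr)=-\sum_{i\geq 1}\frac{1}{k^{i}i!}H_{k}^{i}H_{-k}^{i}+\sum_{i\geq 1}\frac{1}{k^{i-1}(i-1)!}H_{k}^{i-1}H_{-k}^{i-1}.
\end{equation*}
The first sum equals $\Phi-\mathrm{id}$, while the second, after reindexing $i\mapsto i+1$, equals $\Phi$; subtracting yields $\mc{L}_{k}H_{k}=\mathrm{id}$. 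The only point genuinely requiring care is the well-definedness of the formally infinite sums, which, as noted, is exactly the content of admissibility; everything else is routine telescoping once $\Phi$ has been isolated, so I do not anticipate any real obstacle.
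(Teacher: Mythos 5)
Your proof is correct and follows essentially the same route as the paper: both identities are reduced to telescoping the series $\sum_{i}\frac{1}{k^{i}i!}H_{k}^{i}H_{-k}^{i}$ via the commutation relation \cref{A5}, with admissibility guaranteeing that every sum acts locally as a finite sum. The only differences are cosmetic --- you name the operator $\Phi=1-H_{k}\mc{L}_{k}$ before telescoping, and you (correctly) use \cref{A5} in the form $H_{-k}H_{k}^{i}=H_{k}^{i}H_{-k}-ikH_{k}^{i-1}$, which is also what the paper's own computation implicitly requires.
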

\begin{proof}
  By induction, for any integer $i\geq 0$, we have
  \begin{equation}
    \label{A5}
    H_{-k}H_{k}^{i}=H_{k}^{i}H_{-k}-ik, \quad H_{-k}^{i}H_{k}=H_{k}H_{-k}^{i}-ik.
  \end{equation}
  By \cref{A5} we have
  \begin{align*}
    \mc{L}_{k}H_{k}=-\sum_{i\geq 1}\frac{1}{k^{i}i!}H_{k}^{i-1}H_{-k}^{i}H_{k}=\sum_{i\geq 1}(\frac{1}{k^{i-1}(i-1)!}H_{k}^{i-1}H_{-k}^{i-1}-\frac{1}{k^{i}i!}H_{k}^{i}H_{-k}^{i})=1.
  \end{align*}
  and
  \begin{align*}
    & H_{-k}(1-H_{k}\mc{L}_{k})=H_{-k}(Id+\sum_{i=1}^{\infty}\frac{1}{k^{i}i!}H_{k}^{i}H_{-k}^{i})\\
    &= H_{-k}+\sum_{i=1}^{\infty}(\frac{1}{k^{i}i!}H_{k}^{i}H_{-k}^{i+1}-\frac{1}{k^{i-1}(i-1)!}H_{k}^{i-1}H_{-k}^{i})=H_{-k}-H_{-k}=0.
  \end{align*} 
\end{proof}
\begin{proof}[Proof of \cref{224}]
  We only need to prove that it forms a graded $\mc{E}$-representation, and the admissibility follows from the admissibility of $\mc{W}_\infty$.

  First, we notice that for any $k>0$, the restriction of 
  \begin{equation*}
    \mc{L}_k:W_k\to W_k
  \end{equation*}
  at $W_{k-1}$ is zero. Thus we have
  \begin{equation*}
    K_{-1}E=0, \quad FE=id.
  \end{equation*}
  The formula $K_{1}K_{-1}+EF=0$ follows from the definition of $K_{\pm 1}$ and $E,F$.

  On the other hand, by \cref{A6} we have
  \begin{equation*}
    K_{-1}K_{1}=id, \quad FK_{1}=(1-K_{1}K_{-1})K_{1}=0.
  \end{equation*}
  Thus $W$ is a graded $\mc{E}$-representation.
\end{proof}

\subsection{The proof of \cref{prop:228} }
\label{sec:A3}
Let $W$ be an admissible graded $\mc{E}$-representation. As the limit of \cref{eq:ind}, on $W_{l}$ ($l>0$) we have
\begin{equation}
    \label{lem:119}
  \sum_{l=0}^{\infty}E_{l}F_{l}=id.
\end{equation}

\begin{lemma}
  \label{lem:118}
  For any $i>0$, the operators $H_{i,i}$ and $H_{i,-i}$ satisfy the relation
  \begin{equation*}
    H_{i,i}H_{i-i}-H_{i,-i}H_{i,i}=i\cdot id.
  \end{equation*}
\end{lemma}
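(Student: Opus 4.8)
This is a direct computation from the defining relations \cref{eq:EF} of $\mc{E}$ together with \cref{lem22} and \cref{lem:119}; the plan is simply to expand the commutator $H_{i,i}H_{i,-i}-H_{i,-i}H_{i,i}$ and recognize a telescoping sum. Recall that $H_{i,i}=K_{1}|_{W_{i}}$ and $H_{i,-i}=-i\sum_{l\geq 0}K_{1}^{l}K_{-1}^{l+1}|_{W_{i}}$, where the infinite sum defining $H_{i,-i}$ is in fact finite on each $W_{i}$: by admissibility, for every $x\in W_{i}$ one has $K_{-1}^{N}x=0$ for $N\gg 0$, so only finitely many terms act nontrivially on a given vector.

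First I would compute $H_{i,i}H_{i,-i}=-i\sum_{l\geq 0}K_{1}^{l+1}K_{-1}^{l+1}$, which is immediate. For the opposite order the key input is the relation $K_{-1}K_{1}=1$ from \cref{eq:EF}, which gives $K_{-1}^{l+1}K_{1}=K_{-1}^{l}(K_{-1}K_{1})=K_{-1}^{l}$, and hence $H_{i,-i}H_{i,i}=-i\sum_{l\geq 0}K_{1}^{l}K_{-1}^{l+1}K_{1}=-i\sum_{l\geq 0}K_{1}^{l}K_{-1}^{l}$. Subtracting the two expressions produces the telescoping series
\begin{equation*}
  H_{i,i}H_{i,-i}-H_{i,-i}H_{i,i}=i\sum_{l\geq 0}\bigl(K_{1}^{l}K_{-1}^{l}-K_{1}^{l+1}K_{-1}^{l+1}\bigr).
\end{equation*}

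By the second identity of \cref{lem22}, each summand equals $E_{l}F_{l}$, so the right-hand side is $i\sum_{l\geq 0}E_{l}F_{l}$, which on $W_{i}$ (for $i>0$) equals $i\cdot id$ by \cref{lem:119}. There is essentially no obstacle here; the only point deserving a word of care is the convergence of the two infinite sums, and this is exactly the admissibility estimate already used to establish \cref{lem:119} — the partial sums satisfy $\sum_{l=0}^{L}E_{l}F_{l}=1-K_{1}^{L+1}K_{-1}^{L+1}$ by \cref{eq:ind}, and $K_{-1}^{L+1}$ annihilates any fixed vector of $W_{i}$ once $L$ is large, so passing to the limit is legitimate.
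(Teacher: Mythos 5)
Your proof is correct and is essentially the same computation as the paper's: both expand $H_{i,i}H_{i,-i}$ and $H_{i,-i}H_{i,i}$ from the defining formulas, use $K_{-1}K_{1}=1$ to shift indices, and extract $i\cdot id$ from the resulting telescoping sum. The only cosmetic difference is that you route the telescoping through $E_{l}F_{l}=K_{1}^{l}K_{-1}^{l}-K_{1}^{l+1}K_{-1}^{l+1}$ and \cref{lem:119}, while the paper reindexes the single sum directly; your explicit remark on why the infinite sums are finite on each vector is a welcome addition.
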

\begin{proof}We have
  \begin{align*}
    & H_{i,i}H_{i,-i}=-iK_{1}\sum_{l=0}^{\infty}K_{1}^{l}K_{-1}^{l+1}= i-\sum_{l=0}^{\infty}(K_{1})^{l}K_{-1}^{l}\\ & =i-\sum_{l=0}^{\infty}(K_{1})^{l}K_{-1}^{l+1}K_{1}= i+H_{i,-i}H_{i,i}.
  \end{align*}
\end{proof}

\begin{lemma}
  \label{lem:120}
  For any $i>0$ and $|j|<i$, we have
  \begin{equation*}
    H_{i,j}H_{i,i}=H_{i,i}H_{i,j},\quad H_{i,j}H_{i,-i}=H_{i,-i}H_{i,j}
  \end{equation*}
\end{lemma}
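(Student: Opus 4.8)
The plan is to establish both commutation relations by a direct computation: expand $H_{i,j}$ through its recursive definition $H_{i,j}=\sum_{l\ge 0}E_{l}H_{i-1,j}F_{l}$ on $W_{i}$ (which is legitimate precisely because the hypothesis $|j|<i$ places $H_{i-1,j}$ in the already-defined range $|j|\le i-1$), and then reduce everything using the relations of \cref{lem22} together with the defining relations $FK_{1}=0$ and $K_{-1}E=0$. No induction on $i$ is needed. All the infinite sums below are locally finite: on a fixed vector of $W_{i}$, condition (2) of \cref{def:ad1} forces $F_{l}$ to vanish for $l\gg 0$, which simultaneously makes the operators well defined and justifies the rearrangements of double sums.

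First I would record the shifted forms of the relations that do the work. From $F_{0}K_{1}=FK_{1}=0$ and $F_{m+1}K_{1}=F_{m}$ one obtains $F_{m}K_{1}^{l}=F_{m-l}$ for $m\ge l$ and $F_{m}K_{1}^{l}=0$ for $m<l$; combined with $F_{k}K_{-1}^{j}=F_{k+j}$ this gives $F_{m}\,K_{1}^{l}K_{-1}^{l+1}=F_{m+1}$ for $m\ge l$ and $=0$ otherwise. Dually, from $K_{-1}E_{0}=K_{-1}E=0$, $K_{-1}E_{m+1}=E_{m}$ and $K_{1}E_{m}=E_{m+1}$ one gets $K_{1}^{l}K_{-1}^{l+1}E_{m}=E_{m-1}$ for $m\ge l+1$ and $=0$ otherwise.

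For the first identity, substituting $H_{i,i}=K_{1}|_{W_{i}}$ and the recursion and using the first batch of relations (with the reindexing $m\mapsto m+1$ together with $K_{1}E_{m}=E_{m+1}$):
\[
  H_{i,j}H_{i,i}=\sum_{m\ge 1}E_{m}H_{i-1,j}F_{m-1}=K_{1}\!\sum_{m\ge 0}E_{m}H_{i-1,j}F_{m}=K_{1}H_{i,j}=H_{i,i}H_{i,j}
\]
on $W_{i}$, the last equality because $H_{i,j}$ preserves $W_{i}$. For the second identity, substituting $H_{i,-i}=-i\sum_{l\ge 0}K_{1}^{l}K_{-1}^{l+1}|_{W_{i}}$, expanding on both sides, and applying the two batches of relations, one finds $H_{i,j}H_{i,-i}=-i\sum_{l\ge 0}\sum_{m\ge l}E_{m}H_{i-1,j}F_{m+1}$ and $H_{i,-i}H_{i,j}=-i\sum_{l\ge 0}\sum_{m\ge l+1}E_{m-1}H_{i-1,j}F_{m}$; exchanging the order of summation turns each of these (after a reindexing in the second) into $-i\sum_{m\ge 0}(m+1)\,E_{m}H_{i-1,j}F_{m+1}$, so the two coincide.

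There is no conceptual obstacle here; the work is entirely clerical. The two points that need attention are getting the two batches of shifted relations exactly right (the index ranges $m\ge l$ versus $m\ge l+1$ genuinely matter), and confirming that the double-sum exchanges are legitimate, which is exactly where admissibility enters. I would also note explicitly that the counting identity $\sum_{l\ge 0}\sum_{m\ge l}(\cdots)=\sum_{m\ge 0}(m+1)(\cdots)$ is what forces the two expressions for the second identity to match.
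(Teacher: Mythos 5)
Your proof is correct and is essentially the paper's own argument: both identities are obtained by expanding $H_{i,j}=\sum_{l\ge 0}E_{l}H_{i-1,j}F_{l}$ and pushing $K_{1}$, respectively $K_{1}^{m}K_{-1}^{m+1}$, through the sum using \cref{lem22}, with the second identity reducing to the same double sum $-i\sum_{l\ge m\ge 0}E_{l}H_{i-1,j}F_{l+1}$ on both sides. Your write-up is if anything slightly more careful than the paper's (you make the index ranges, the reindexing, and the role of admissibility in the convergence of the sums explicit), but the route is the same.
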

\begin{proof}
  First we prove that $H_{i,j}H_{i,i}=H_{i,i}H_{i,j}$. It follows from
  \begin{align*}
    H_{i,j}H_{i,i}=\sum_{l=0}^{\infty}E_{l}H_{i-1,j}F_{l}K_{1}=\sum_{l=0}^{\infty}E_{l+1}H_{i-1,j}F_{l}=H_{i,i}H_{i,j}
  \end{align*}
  by \cref{lem22}.
  Then we prove that $H_{i,j}H_{i,-i}=H_{i,-i}H_{i,j}$. It follows from
  \begin{align*}
    & H_{i,j}H_{i,-i}=-i\sum_{l,m\geq 0}E_{l}H_{i-1,j}F_{l}K_{1}^{m}K_{-1}^{m+1}=-i\sum_{l\geq m\geq 0}E_{l}H_{i-1,j}F_{l+1}, \\
    & H_{i,-i}H_{i,j}=-i\sum_{l,m\geq 0}K_{1}^{m}K_{-1}^{m+1}E_{l}H_{i-1,j}F_{l}=-i\sum_{l>m\geq 0}E_{l-1}H_{i-1,j}F_l\\
    & =-i\sum_{l\geq m\geq 0}E_{l}H_{i-1,j}F_{l+1}=H_{i,j}H_{i,-i}
  \end{align*}
    by \cref{lem22}.
\end{proof}

\begin{proposition}
  \label{prop:120}
  On $W_{i}$, the operators $H_{i,j}$ where $j\neq 0$ and $-i\leq j\leq i$ satisfy the relation
  \begin{equation}
    \label{eq:com}
    H_{i,j}H_{i,k}-H_{i,k}H_{i,j}=j\delta_{j,-k}id.
  \end{equation}
\end{proposition}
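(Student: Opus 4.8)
The plan is to prove \cref{eq:com} by induction on $i\geq 1$, first disposing of the boundary indices $j,k\in\{i,-i\}$ and then reducing the interior case to the previous level of the induction.

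First I would fix $i\geq 1$ and a pair $j,k$ with $j,k\neq 0$ and $-i\leq j,k\leq i$, and handle the cases where $j$ or $k$ equals $\pm i$ without invoking any inductive hypothesis. When $j=k$ both sides of \cref{eq:com} vanish, since $\delta_{j,-k}=0$ for $i>0$. When $\{j,k\}=\{i,-i\}$, \cref{eq:com} is exactly \cref{lem:118} (up to the sign supplied by antisymmetry of the commutator). When exactly one of $j,k$ lies in $\{i,-i\}$, say $|k|=i$ and $|j|<i$, then $j\neq -k$, so $\delta_{j,-k}=0$ and \cref{eq:com} reduces to the commutativity $H_{i,j}H_{i,k}=H_{i,k}H_{i,j}$, which is \cref{lem:120}. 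This already settles the base case $i=1$, where every admissible index equals $\pm 1$.

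For the inductive step I would then take $i>1$, assume \cref{eq:com} holds on $W_{i-1}$ for all admissible pairs, and treat the remaining case $|j|,|k|\leq i-1$. Here both $H_{i,j}$ and $H_{i,k}$ are given by the recursion $H_{i,j}=\sum_{l\geq 0}E_{l}H_{i-1,j}F_{l}$, and expanding the product and applying the orthogonality relation $F_{l}E_{m}=\delta_{l,m}$ from \cref{lem22} collapses the resulting double sum to
\begin{equation*}
  H_{i,j}H_{i,k}=\sum_{l\geq 0}E_{l}H_{i-1,j}H_{i-1,k}F_{l},\qquad H_{i,k}H_{i,j}=\sum_{l\geq 0}E_{l}H_{i-1,k}H_{i-1,j}F_{l}.
\end{equation*}
Subtracting, substituting the inductive hypothesis $H_{i-1,j}H_{i-1,k}-H_{i-1,k}H_{i-1,j}=j\delta_{j,-k}\,\mathrm{id}$ on $W_{i-1}$, and then using the completeness relation $\sum_{l\geq 0}E_{l}F_{l}=\mathrm{id}$ on $W_{i}$ from \eqref{lem:119} yields
\begin{equation*}
  H_{i,j}H_{i,k}-H_{i,k}H_{i,j}=j\delta_{j,-k}\sum_{l\geq 0}E_{l}F_{l}=j\delta_{j,-k}\,\mathrm{id}
\end{equation*}
on $W_{i}$, which closes the induction.

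The one point requiring care — and I do not expect it to amount to more than bookkeeping — is the well-definedness of the infinite sums on $W_{i}$ together with the fact that $H_{i,j}$ preserves $W_{i}$, so that all compositions above make sense. Both follow from admissibility: for $x\in W_{i}$ one has $K_{-1}^{N}x=0$ for $N\gg 0$, hence $F_{l}x=FK_{-1}^{l}x=0$ for $l\geq N$, so each sum is finite. Beyond this I do not anticipate a genuine obstacle, since all the representation-theoretic content has already been extracted in \cref{lem:118,lem:120}, and what remains is a formal manipulation with the identities of \cref{lem22} and \eqref{lem:119}.
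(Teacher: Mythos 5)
Your proposal is correct and follows essentially the same route as the paper: induction on $i$, with the boundary indices $k=\pm i$ handled by \cref{lem:118,lem:120} and the interior case collapsed via $F_{l}E_{m}=\delta_{l,m}$ and the completeness relation $\sum_{l\geq 0}E_{l}F_{l}=\mathrm{id}$ from \cref{lem:119}. Your extra remarks on the convergence of the infinite sums and the explicit treatment of the base case are harmless refinements of the same argument.
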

\begin{proof}
  We assume that $|j|\leq |k|$. We prove it by induction on $i$. It is trivial when $i=0$. Assuming it is true for $i-1$, then if $k=\pm i$, \cref{eq:com} follows from \cref{lem:118} and \cref{lem:120}. Otherwise,   by \cref{lem22} we have
  \begin{align*}
    H_{i,j}H_{i,k}=\sum_{l,m\geq 0}E_{l}H_{i-1,j}F_{l}E_{m}H_{i-1,k}F_{m}= E_{l}H_{i-1,j}H_{i-1,k}F_{l}.
  \end{align*}
  Thus we have
  \begin{align*}
    [H_{i,j},H_{i,k}]=\sum_{l=0}^{\infty}E_{l}[H_{i-1,j},H_{i-1,k}]F_{l}
    =j\delta_{j,-k}\sum_{l=0}^{\infty}E_{l}F_{l}=j\delta_{j,-k}id
  \end{align*}
  by \cref{lem:119}. 
\end{proof}
\begin{proof}[Proof of \cref{prop:228} ]
 It is the limit of \cref{prop:120} when $i\to \infty$.
\end{proof}

\bibliography{resolution.bib}

\begin{thebibliography}{10}

\bibitem{Arinkin2015}
D.~Arinkin and D.~Gaitsgory.
\newblock Singular support of coherent sheaves and the geometric langlands
  conjecture.
\newblock {\em Selecta Mathematica}, 21(1):1--199, Jan 2015.

\bibitem{MR1437495}
K.~Behrend and B.~Fantechi.
\newblock The intrinsic normal cone.
\newblock {\em Invent. Math.}, 128(1):45--88, 1997.

\bibitem{MR2669705}
David Ben-Zvi, John Francis, and David Nadler.
\newblock Integral transforms and {D}rinfeld centers in derived algebraic
  geometry.
\newblock {\em J. Amer. Math. Soc.}, 23(4):909--966, 2010.

\bibitem{1362544419271316864}
Andrei Caldararu.
\newblock The {M}ukai pairing {II}: the {H}ochschild{-K}ostant{-R}osenberg
  isomorphism.
\newblock {\em Advances in Mathematics}, 194(1):34--66, 06 2005.

\bibitem{ciocan-fontanine07:virtual}
Ionu{\c{t}} Ciocan-Fontanine and Mikhail Kapranov.
\newblock Virtual fundamental classes via dg--manifolds.
\newblock {\em Geometry \& Topology}, 13(3):1779--1804, 2009.

\bibitem{diaconescu2022cohomological}
Duiliu-Emanuel Diaconescu, Mauro Porta, and Francesco Sala.
\newblock Cohomological {H}all algebras and their representations via torsion
  pairs.
\newblock {\em arXiv preprint arXiv:2207.08926}, 2022.

\bibitem{Hecke}
Duiliu-Emanuel Diaconescu, Mauro Porta, Francesco Sala, and Yu~Zhao.
\newblock Hecke operators of one-dimensional sheaves on a smooth surface.
\newblock in preparation.

\bibitem{Galakhov2021}
Dmitry Galakhov, Wei Li, and Masahito Yamazaki.
\newblock Shifted quiver yangians and representations from bps crystals.
\newblock {\em Journal of High Energy Physics}, 2021(8):146, Aug 2021.

\bibitem{Galakhov2023}
Dmitry Galakhov, Alexei Morozov, and Nikita Tselousov.
\newblock Super-{S}chur polynomials for affine super {Y}angian
  \{$\widehat{Y(gl(1|1))}$\}.
\newblock {\em Journal of High Energy Physics}, 2023(8):49, Aug 2023.

\bibitem{grojnowski1996instantons}
Ian Grojnowski.
\newblock Instantons and affine algebras {I}: The {H}ilbert scheme and vertex
  operators.
\newblock {\em Mathematical Research Letters}, 3(2):275--291, 1996.

\bibitem{Göttsche1990}
Lothar Göttsche.
\newblock The betti numbers of the hilbert scheme of ponts on a smooth
  projective surface.
\newblock {\em Mathematische Annalen}, 286(1-3):193--208, 1990.

\bibitem{jiang2022grassmanian}
Qingyuan Jiang.
\newblock Derived {G}rassmannians and derived {S}chur functors.
\newblock {\em arXiv preprint arXiv:2212.10488}, 2022.

\bibitem{jiang2022derived}
Qingyuan Jiang.
\newblock Derived projectivizations of complexes.
\newblock {\em arXiv preprint arXiv:2202.11636}, 2022.

\bibitem{jiang2023derived}
Qingyuan Jiang.
\newblock Derived categories of derived {G}rassmannians.
\newblock {\em arXiv preprint arXiv:2307.02456}, 2023.

\bibitem{khan2019virtual}
Adeel~A Khan.
\newblock Virtual fundamental classes of derived stacks {I}.
\newblock {\em arXiv preprint arXiv:1909.01332}, 2019.

\bibitem{koseki2021categorical}
Naoki Koseki.
\newblock Categorical blow-up formula for {Hilbert} schemes of points.
\newblock {\em arXiv preprint arXiv:2110.08315}, 2021.

\bibitem{kuhn2021blowup}
Nikolas Kuhn and Yuuji Tanaka.
\newblock A blowup formula for virtual enumerative invariants on projective
  surfaces.
\newblock {\em arXiv preprint arXiv:2107.08155}, 2021.

\bibitem{MR1467172}
Jun Li and Gang Tian.
\newblock Virtual moduli cycles and {G}romov-{W}itten invariants of algebraic
  varieties.
\newblock {\em J. Amer. Math. Soc.}, 11(1):119--174, 1998.

\bibitem{Li2020}
Wei Li and Masahito Yamazaki.
\newblock Quiver yangian from crystal melting.
\newblock {\em Journal of High Energy Physics}, 2020(11):35, Nov 2020.

\bibitem{maulik2020}
Davesh Maulik and Andrei Negu\c{t}.
\newblock Lehn’s formula in chow and conjectures of beauville and voisin.
\newblock {\em Journal of the Institute of Mathematics of Jussieu}, page
  1–39, 2020.

\bibitem{nakajima1997heisenberg}
Hiraku Nakajima.
\newblock Heisenberg algebra and {H}ilbert schemes of points on projective
  surfaces.
\newblock {\em Annals of mathematics}, 145(2):379--388, 1997.

\bibitem{1050282810787470592}
Hiraku Nakajima and Kota Yoshioka.
\newblock Perverse coherent sheaves on blow-up. {II}. wall-crossing and {B}etti
  numbers formula.
\newblock {\em Journal of Algebraic Geometry}, 20(1):47--100, 03 2010.

\bibitem{1361418519756759552}
Hiraku Nakajima and Kota Yoshioka.
\newblock Perverse coherent sheaves on blow-up. {I}. a quiver description.
\newblock {\em Advanced Studies in Pure Mathematics}, 61:349, 2011.

\bibitem{1050564285764182144}
Hiraku Nakajima and Kota Yoshioka.
\newblock Perverse coherent sheaves on blowup, {III}: Blow-up formula from
  wall-crossing.
\newblock {\em Kyoto Journal of Mathematics}, 51(2):263--335, 04 2011.

\bibitem{neguct2018hecke}
Andrei Negu{\c{t}}.
\newblock Hecke correspondences for smooth moduli spaces of sheaves.
\newblock {\em Publications math{\'e}matiques de l'IH{\'E}S}, 135(1):337--418,
  Jun 2022.

\bibitem{rapcak2020cohomological}
Miroslav Rapcak, Yan Soibelman, Yaping Yang, and Gufang Zhao.
\newblock Cohomological {H}all algebras and perverse coherent sheaves on toric
  {C}alabi-{Y}au 3-folds.
\newblock {\em arXiv preprint arXiv:2007.13365}, 2020.

\bibitem{SchurgToenVezzosi+2015+1+40}
Timo Schürg, Bertrand Toën, and Gabriele Vezzosi.
\newblock Derived algebraic geometry, determinants of perfect complexes, and
  applications to obstruction theories for maps and complexes.
\newblock {\em Journal für die reine und angewandte Mathematik (Crelles
  Journal)}, 2015(702):1--40, 2015.

\bibitem{MR4608555}
Yukinobu Toda.
\newblock Derived categories of {Q}uot schemes of locally free quotients via
  categorified {H}all products.
\newblock {\em Math. Res. Lett.}, 30(1):239--265, 2023.

\bibitem{zhao2024}
Yu~Zhao.
\newblock Degeneracy loci for smooth moduli of sheaves.
\newblock in preparation.

\bibitem{yuzhaoderived}
Yu~Zhao.
\newblock Derived blow-ups and birational geometry of nested quiver varieties.
\newblock {\em arXiv preprint arXiv:2303.01063}, 2023.

\end{thebibliography}
\bibliographystyle{plain}

\end{document}